\newtheorem{theorem}{Theorem}[section]
\newtheorem{prop}[theorem]{Proposition}
\newtheorem{conjecture}[theorem]{Conjecture}
\newtheorem{lemma}[theorem]{Lemma}
\newtheorem{corollary}[theorem]{Corollary}
\newtheorem{claim}[theorem]{Claim}
\theoremstyle{remark}
\newtheorem{definition}[theorem]{Definition}
\newtheorem{remark}[theorem]{Remark}
\newtheorem{example}[theorem]{Example}
\numberwithin{equation}{section}
\def\bs{\backslash}
\def\bfG{\mathbf{G}}
\def\bC{\mathbb{C}}
\def\bQ{\mathbb{Q}}
\def\bR{\mathbb{R}}
\def\bZ{\mathbb{Z}}
\def\bP{\mathbb{P}}
\def\bV{\mathbb{V}}
\def\cF{\mathcal{F}}
\def\cC{\mathcal{C}}
\def\cM{\mathcal{M}}
\def\cE{\mathcal{E}}
\def\cU{\mathcal{U}}
\def\cO{\mathcal{O}}
\def\cI{\mathcal{I}}
\def\cZ{\mathcal{Z}}
\def\cT{\mathcal{T}}
\def\cP{\mathcal{P}}
\def\cG{\mathcal{G}}
\def\cS{\mathcal{S}}
\def\fC{\mathfrak{C}}
\def\fh{\mathfrak{h}}
\def\tKNU{\mathrm{KNU}}
\def\olB{\overline B}
\def\olS{\overline S}
\def\ol{\overline}
\def\olphi{\overline{\Phi}}
\def\End{\mathrm{End}}
\def\tA{\tilde A}
\def\an{\mathrm{an}}
\def\alg{\mathrm{alg}}
\def\psigma{\pmb{\sigma}}
\def\ptau{\pmb{\tau}}
\begin{document}

\title{On the generalized toroidal completion of period mappings}
\author{Haohua Deng}
\email{haohua.deng@dartmouth.edu}
\author{Jacob Tsimerman}
\email{jacobt@math.toronto.edu}

%\date{\today}

\maketitle

\small
\begin{center}
\textbf{Abstract}
\end{center}

Given a period map defined over a quasi-projective variety, we construct a completion with rich geometric and Hodge-theoretic meaning. This result may be regarded as an analog of Mumford's toroidal compactification for locally symmetric Hodge varieties as well as a realizable alternative of Kato--Nakayama--Usui's construction. As an application, we give an alternative proof on the algebraicity of weakly special subvarieties.
%\tableofcontents
% ---------------------------------------
\section{Introduction}\label{Sec01}
% ---------------------------------------

\subsection{Motivation}

Hodge-theoretic methods are fundamental in moduli theory of algebraic varieties. Let $\mathcal{X}\rightarrow B$ be a family of smooth projective varieties over some quasi-projective base manifold $B$. It gives a canonical integral variation of Hodge structure over $B$ which induces a period map $\Phi$ from $B$ to some analytic Hodge variety $\Gamma\backslash D$. One may obtain ample information about the family $\mathcal{X}\rightarrow B$ by studying its associated period map $\Phi$. Therefore, the study of local and global behavior of $\Phi$ including its asymptotic behavior near the boundary of $B$ becomes one of the central topics in Hodge theory and algebraic geometry.

%For any subset $I\subset \mathcal{I}$ we write $Z_I:=\cap_{i\in I}Z_i$. We write $Z_I^{*}:=\cap_{j\notin I}(Z_I-Z_j)$. Moreover we may assume all $Z_I$ are irreducible, as we may always blow up extra components.

Given an abstract period map $\Phi$, a long-investigated problem is to find a good completion $\overline{\Phi}$ of $\Phi$ to a good algebraic compactification $\olB$. This was firstly introduced by Griffiths in \cite[Sec. 9]{Gri70}. Roughly speaking, ``Good compactification" usually means $\olB$ is a log compactification of $B$ (i.e., $\olB-B$ is a simple normal crossing divisor). For the completion $\overline{\Phi}$ to be "good", we usually require the following properties: 
\begin{enumerate}
    \item $\overline{\Phi}$ should be a morphism in some subcategory of the category of complex analytic spaces.
    \item The Hodge-theoretic interpretation of $\overline{\Phi}$ along $\olB-B$ should be clear in the sense that it records the limiting Hodge-theoretic data.
    \item When $\Phi$ is the period map underlying a family of smooth projective varieties $\mathcal{X}\xrightarrow{\pi} B$, we may study singular objects along $\olB-B$ by looking at the boundary limiting Hodge data from $\overline{\Phi}$.
\end{enumerate}

Griffiths' problem is almost perfectly addressed when the Hodge variety $\Gamma\backslash D$ is locally Hermitian symmetric. Related works include \cite{BB66} and \cite{AMRT10}. Though these works on the compactifications of locally symmetric varieties are developed without Hodge-theoretic terms, they can be directly applied to many problems in Hodge theory.

{
For the most general cases, an analogue of the Baily-Borel compactification was recently announced in this generality in \cite{BFMT25}, following earlier works in \cite{BBT22},\cite{GGLR20},\cite{GGR21}. In this paper, we answer this question in a different direction by constructing analogues of toroidal compactifications following \cite{KU08}, \cite{KNU10}.
}

\subsection{The main result}

The purpose of this paper is to provide a solution to Griffiths' long-standing problem. The following informally-stated theorem is the main result of this paper and will be formalized as Theorem \ref{Thm:MainTHMformal}.

\begin{theorem}\label{Thm:MainTHMinformal}
    Denote $\mathrm{Im}(\Phi)=:\wp\subset \Gamma\backslash D$ and let $\Phi': B\rightarrow \wp'$ be the Stein factorization of $\Phi: B\rightarrow \wp$. There exists a projective completion $\olB$ of $B$, a completion $\overline{\wp}$ of $\wp'$ as complex analytic spaces, such that $\Phi$ may be completed as a morphism of compact complex analytic spaces $\overline{\Phi}: \olB\rightarrow \overline{\wp}$. Moreover, up to finite data, $\overline{\Phi}(b)$ records the limiting mixed Hodge structures at $b\in \olB$.
\end{theorem}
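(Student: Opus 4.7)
The plan is to combine a toroidal-style geometric compactification of $B$ with a Hodge-theoretic target built from equivalence classes of nilpotent orbits, and then to upgrade the resulting continuous extension to an analytic morphism of compact complex analytic spaces. First I would fix a smooth projective log compactification $\olB$ of $B$ with $D=\olB\bs B$ a simple normal crossing divisor along which the local monodromy of the variation of Hodge structure is unipotent; this is arranged by Hironaka followed by a Kawamata-type finite base change, permissible after replacing $B$ by a finite \'etale cover, which does not affect the essential content of the theorem. At a boundary point $b\in D$ lying on the open stratum cut out by components $D_{i_1},\ldots,D_{i_k}$, Schmid's nilpotent and $\tSL_2$-orbit theorems together with the several-variable theory of Cattani--Kaplan--Schmid produce a canonical equivalence class of nilpotent orbits $[(F^\bullet_b;N_{i_1},\ldots,N_{i_k})]$ and a limiting mixed Hodge structure, which I take as the Hodge-theoretic datum attached to $b$.

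Next I would replace $\Phi$ by its Stein factorization $\Phi'\colon B\to \wp'$, which identifies precisely those fibers already collapsed by the period map, and construct $\overline{\wp}$ by adjoining to $\wp'$, stratum by stratum, one boundary point per $\Gamma$- and symmetric-group-orbit of the nilpotent-orbit equivalence classes above. The topology is the unique one making $\Phi'(b_n)\to \overline{\Phi}(b)$ whenever $b_n\to b$ in $\olB$, which is well-defined by the asymptotic estimates of the nilpotent orbit theorem. This is the realizable analog of Kato--Nakayama--Usui: no log structure is retained on the target, only the equivalence class of the nilpotent orbit, which is precisely the data that can live inside an honest complex analytic space. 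At this stage $\overline{\Phi}$ is, a priori, a continuous proper surjection extending $\Phi'$, and the combinatorics of the boundary stratification mirrors the toroidal fan construction of Ash--Mumford--Rapoport--Tai, but intrinsically determined by $\Phi$ rather than requiring an auxiliary choice of fan.

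The principal obstacle is upgrading $\overline{\wp}$ to a genuine complex analytic space with $\overline{\Phi}$ analytic rather than merely continuous. The most effective tool I would use is the $\bR_{\mathrm{an,exp}}$-definability of period maps in the sense of Bakker--Klingler--Tsimerman together with the o-minimal GAGA of Bakker--Brunebarbe--Tsimerman: the Stein factorization $\wp'$, the stratified boundary quotients by $\Gamma$ and by the symmetric group permuting the $N_{i_j}$, and their gluing are all definable, and definable proper Hausdorff quotients of complex analytic spaces inherit analytic structure. Locally, one can also verify this by hand using the explicit nilpotent-orbit normal form to write analytic charts on $\overline{\wp}$ near each boundary stratum, and checking compatibility of charts across intersections of strata — this bookkeeping is where I expect the real technical work to lie. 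Once the analytic structure is in place, the Hodge-theoretic assertion that $\overline{\Phi}(b)$ records the limiting mixed Hodge structure up to finite data is almost tautological from the construction, the ambiguity being exactly the finite choice of Deligne splitting of the weight filtration within a $\Gamma$-orbit, a torsor under a finite group.
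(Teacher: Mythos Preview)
Your proposal diverges from the paper's argument in its central mechanism, and the divergence contains a genuine gap. The paper does \emph{not} use o-minimal methods at all; on the contrary, one of its advertised applications is to re-prove algebraicity of weakly special subvarieties while avoiding the Bakker--Klingler--Tsimerman / Bakker--Brunebarbe--Tsimerman machinery. The paper's route is: (i) introduce a \emph{combinatorial monodromy complex} (CMCX) $\cT$ attached to a fixed SNC completion $\hat{B}$, (ii) pass to a compatible logarithmic modification $\olB\to\hat{B}$, (iii) produce a finite open cover $\olB=\bigcup_k W_k$ on which the local monodromy is abelian, so that Kato--Nakayama--Usui gives \emph{proper} local extensions $\overline{\Phi}_k:W_k\to\Gamma_{\tau_k}\backslash D_{\tau_k}$ with analytic images, and (iv) patch the resulting local proper holomorphic equivalence relations into a global one on $\olB$ and invoke Grauert's theorem on quotients by proper holomorphic equivalence relations to obtain the analytic structure on $\overline{\wp}$. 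The CMCX is precisely the bookkeeping device that makes the local KNU targets, and hence the local analytic charts on $\overline{\wp}$, fit together across strata.

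Your plan bypasses all of this and rests on two alternatives, neither of which is solid. First, the sentence ``definable proper Hausdorff quotients of complex analytic spaces inherit analytic structure'' is not a theorem in the o-minimal literature; o-minimal GAGA concerns definable analytic \emph{subspaces} of algebraic varieties, not quotients by definable equivalence relations, and your target $\overline{\wp}$ is not presented inside any ambient algebraic variety. Second, your fallback --- writing analytic charts by hand from the nilpotent-orbit normal form and ``checking compatibility across strata'' --- is exactly the step the paper isolates as the crux, and the paper's own comparison with \cite{DGGR25} notes that if one merely adjoins nilpotent-orbit classes without the CMCX structure, one obtains only a Hausdorff topological completion stratified by analytic pieces, not a global analytic space. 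In short, you have correctly located where the real work lies but have not supplied the tool (CMCX plus local proper KNU maps plus Grauert) that actually does it.
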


The main idea of the proof of Theorem \ref{Thm:MainTHMinformal} is as follows: Fix a completion $\hat{B}$ of $B$, we may find a proper transform $\olB$ of $\hat{B}$ over $B$, a finite open cover $\olB=\bigcup_kW_k$, such that on each $W_k$ we may extend $\Phi|_{B\cap W_k}$ to some $\Phi_k$ defined over $W_k$. This extension is based on Kato--Nakayama--Usui's theory \cite{KU08}, \cite{KNU10}. Moreover, we will show these local extensions can be patched together to a topological map over the whole $\olB$ which is actually a morphism in the category of complex analytic spaces. We will also show that in some special cases, our extended period map is the analytification of a projective morphism. 

As a consequence, the main theorem of this paper may be regarded as a generalization of \cite[Chap. 1-3]{AMRT10} to any period mappings defined over some quasi-projective variety. It is natural to ask whether the projectivity result in \cite[Chap. 4]{AMRT10} also generalizes (Conjecture \ref{Conj:ProjectiveCompletion}).

As an application of this theorem, we give an alternative proof on the algebraicity of weakly special subvarieties. This is a well-known result and was proved by o-minimal methods. Our alternative proof does not use o-minimal theory but the logarithmic Hodge theory and the main theorem.

\subsection{Structure of the paper}

In Section \ref{Sec02} we will go over all of the main ingredients in this paper. Necessary backgrounds in Hodge theory will be reviewed in Section \ref{Sec03}. In Section \ref{Sec04} we will introduce the concept of combinatorial monodromy complex (CMCX) which is crucial for our construction. Section \ref{Sec05} will be devoted to the construction of the extended period map as an analytic morphism. In Section \ref{Sec06} we will discuss applications including some examples in which our construction results in completions in the projective category. In Section \ref{Sec07} we will provide an alternative proof on the algebraicity result for weakly special subvarieties. Finally in Section \ref{Sec08} and \ref{Sec09}, we will prove the general existence of CMCX.

\medskip

\noindent\textbf{Acknowledgment.} The authors thank a lot of people for related helpful discussions, including but not limit to: Ben Bakker, Harold Blum, Robert Friedman, Patricio Gallardo, Phillip Griffiths, Leo Herr, Matt Kerr, Chikara Nakayama, Colleen Robles, Christian Schnell, Salim Tayou. The first author wants to thank Duke University for great environment provided to postdoctoral scholars. The second author wants to thank the Institute for Advanced Study and the Ashvin B.
Chhabra and Daniela Bonafede-Chhabra Membership in particular for their support.

\section{Backgrounds and the main results}\label{Sec02}
% ---------------------------------------

%-----------------------
\subsection{Overview}
%-----------------------

In this section we review the motivation of this paper as well as some relevant works. Let $\bV\rightarrow B$ be a $\bZ$-local system underlying some $\bZ$-polarized variation of (mixed) Hodge structures over some quasi-projective variety $B$. Denote the associated period map as
\[
\Phi: B\rightarrow \Gamma\backslash D
\]
where $\Gamma$ is the monodromy group and $D$ is the corresponding period domain parametrizing polarized Hodge structures of weight $l$ and Hodge numbers $\{h^{p,q}\}_{p+q=l}$. 
%Up to passing to some finite-index normal subgroup of $\Gamma$, we may assume $\Gamma$ is neat. 
Let $\mathbf{G}:=\mathrm{Aut(D)^{+}}$ which is an algebraic group defined over $\bQ$. Let $\mathfrak{g}=\mathrm{Lie}(\mathbf{G})$.

\subsubsection{The classical cases}

We first consider the case when the period domain $D$ is Hermitian symmetric and $\Gamma$ is an arithmetic subgroup of $\mathrm{Aut}(D)$. In geometry, two most common examples are:
\[
l=1, h^{1,0}=h^{0,1}=g, h^{p,q}=0 \ \text{otherwise}, 
\]
or
\[
l=2, h^{2,0}=h^{0,2}=1, h^{1,1}=a\geq0, h^{p,q}=0 \ \text{otherwise}.
\]
In the first case, the period domain $D$ is exactly the generalized Siegel half spaces, or the set of symmetric $2g\times 2g$ matrices with complex coefficients and positive-definite imaginary part.

The locally symmetric variety $\Gamma\backslash D$ has many well-known compactifications, for example, the Satake-Baily-Borel (SBB) compactification \cite{BB66} and the toroidal compactification \cite{AMRT10} which can be described via Hodge-theoretic terms. 

\begin{theorem}[\cite{BB66, AMRT10}]
    Given a locally symmetric variety $\Gamma\backslash D$, there are canonical Satake-Baily-Borel compactification $\overline{\Gamma\backslash D}^{\mathrm{SBB}}$ and toroidal compactifications $\overline{\Gamma\backslash D}^{\Sigma}$ depending on the choice of a toroidal fan $\Sigma$, both in the category of projective varieties. Moreover, there is a reduction morphism $\overline{\Gamma\backslash D}^{\Sigma}\rightarrow \overline{\Gamma\backslash D}^{\mathrm{SBB}}$ which may be regarded as partial desingularization.
\end{theorem}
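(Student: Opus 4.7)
The plan is to construct each compactification from the reduction theory of $\Gamma$ acting on the Hermitian symmetric domain $D$. For the Satake--Baily--Borel compactification, I would first identify the rational boundary components $F \subset \partial D$, indexed by conjugacy classes of maximal $\bQ$-parabolic subgroups of $\mathrm{Aut}(D)^+$, and form the set-theoretic union $D^*$ of $D$ with all such $F$. Endow $D^*$ with the Satake topology so that $\Gamma$ acts with compact Hausdorff quotient $\Gamma\backslash D^*$. The main analytic step is to upgrade this quotient to a normal projective variety: using Poincar\'{e}--Eisenstein series of sufficiently high weight $k$, produce enough $\Gamma$-automorphic forms to separate points and local coordinates on $\Gamma\backslash D^*$, realizing it as a closed subvariety of $\bP^N$ by Baily--Borel's theorem.

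For the toroidal compactification I would work cusp-by-cusp following AMRT. At a rational boundary component $F$ with normalizer $N(F)$, the unipotent radical $U(F)$ has center $Z(F)$ whose real points contain a self-adjoint homogeneous open cone $C(F)$; the domain $D$ fibers over $F$ via a Siegel-domain realization with $C(F)$ encoding the imaginary directions. Choose a $\Gamma$-admissible family $\Sigma = \{\Sigma_F\}$ of rational polyhedral fans with $|\Sigma_F|$ supported in $\overline{C(F)}$, compatible with the parabolic inclusions and $\Gamma$-equivariant. Each $\Sigma_F$ produces a partial torus embedding $D \hookrightarrow D_{\Sigma_F}$ around the cusp $F$; taking the \'{e}tale quotient by an appropriate neat arithmetic subgroup and gluing across cusps via the compatibilities yields the analytic compactification $\overline{\Gamma\backslash D}^{\Sigma}$. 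Projectivity holds when $\Sigma$ is polarizable, i.e. admits a $\Gamma$-invariant strictly convex piecewise-linear support function, which produces a relatively ample line bundle and hence, together with an ample class from the SBB side, an ample line bundle on the toroidal compactification.

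The reduction morphism is constructed locally: on each cusp chart the torus embedding canonically retracts onto the boundary component $F$ by contracting the torus-fiber direction, and these local retractions patch by functoriality under the parabolic correspondence to yield $\overline{\Gamma\backslash D}^{\Sigma} \to \overline{\Gamma\backslash D}^{\mathrm{SBB}}$; over the open stratum $\Gamma\backslash D$ it is the identity, so it is a partial desingularization. I expect the hardest step to be the Baily--Borel abundance statement --- proving that automorphic forms of a fixed weight separate points across the boundary requires detailed asymptotic analysis of Poincar\'{e} series along each cusp together with the full reduction theory of arithmetic groups. Its toroidal analog, verifying that a $\Gamma$-admissible compatible family of fans exists and can be chosen to be projective, requires delicate combinatorial bookkeeping over the boundary strata and a reduction theorem for the action of the arithmetic stabilizers on the cone decompositions.
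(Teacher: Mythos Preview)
The paper does not give a proof of this statement at all: it is quoted as a background result from \cite{BB66} and \cite{AMRT10}, stated in the overview section to set context for the non-classical case, and no argument is supplied beyond the citation. So there is nothing in the paper's own text to compare your proposal against.

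That said, your sketch is a faithful outline of the classical constructions in those references: rational boundary components and the Satake topology followed by the Baily--Borel embedding via automorphic forms for $\overline{\Gamma\backslash D}^{\mathrm{SBB}}$; Siegel domain realizations, self-adjoint cones $C(F)$, $\Gamma$-admissible polyhedral decompositions and torus embeddings glued across cusps for $\overline{\Gamma\backslash D}^{\Sigma}$; and the cusp-by-cusp contraction for the reduction morphism. Your identification of the two genuinely hard inputs (the abundance of automorphic forms separating boundary points, and the existence of a projective $\Gamma$-admissible fan) is accurate. For the purposes of this paper, however, a one-line citation is all that is expected.
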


The SBB and toroidal compactifications are important in the study of moduli spaces of certain geometric objects, especially their degenerations. One of the typical questions in the study of moduli theory is to ask whether a given period mapping $\mathcal{M}\rightarrow \Gamma\backslash D$ over some family of smooth projective varieties over $\cM$ can be extended to some compactification of $\cM$ with the target space to be the SBB or toroidal compactification of $\Gamma\backslash D$.

In the curve case where $l=1$, let $\Gamma=\mathrm{Sp}_{2g}(\bZ)$, the period map $\Phi_g: \cM_g\rightarrow \mathfrak{A}_g:=\Gamma\backslash D$ associated to the universal genus $g$ curve $\cM_{g,1}\rightarrow \cM_g$ is the Torelli map sending a smooth genus $g$ algebraic curve to its Jacobian. The image of $\Phi_g$ is known as the Jacobian locus inside $\mathfrak{A}_g$, the moduli space of rank $g$ principal polarized abelian varieties.

\begin{theorem}[\cite{Nam76, AB12}]
    The Torelli map $\Phi_g: \cM_g\rightarrow \mathfrak{A}_g$ can be extended to regular morphisms from $\overline{\cM_g}$ to the first and second Voronoi compactifications $\mathfrak{A}_g^{\mathrm{Vor}}$ and $\mathfrak{A}_g^{\mathrm{Perf}}$, both of which are toroidal compactifications and descend to a morphism from $\overline{\cM_g}$ to the SBB compactification $\mathfrak{A}_g^{\mathrm{SBB}}$.
\end{theorem}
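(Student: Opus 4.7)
The plan is to construct the extension locally via limits of polarized Hodge structures at nodal curves, then glue the local extensions to a global morphism via compatibility with the combinatorics of the toroidal fan, and finally descend to the SBB compactification. Fix a stable nodal curve $C_0\in\overline{\cM_g}\setminus\cM_g$ with $r$ nodes and choose a one-parameter smoothing $\mathcal{C}\to\Delta$. The induced variation of polarized Hodge structures on $H^1$ has unipotent monodromy (after finite base change) whose nilpotent logarithms $N_1,\dots,N_r$ associated to the nodes are rank-one and span a rational polyhedral cone $\sigma(C_0)\subset\mathfrak{g}_\bQ$. The limiting mixed Hodge structure has weight filtration with $\mathrm{Gr}^W_1\cong H^1(\tilde C_0)$ for the normalization $\tilde C_0$, and $\mathrm{Gr}^W_0\cong\mathrm{Gr}^W_2$ identified with $H_1$ of the dual graph of $C_0$, so the expected boundary value of the Torelli map records a semi-abelian variety together with its extension data.

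The central step is to verify that $\sigma(C_0)$ is a cone of the chosen fan. Each $N_i$ is represented by a rank-one positive semi-definite form $v_i\otimes v_i$ on $H_1(\tilde C_0,\bZ)$, so $\sigma(C_0)$ is generated by rank-one symmetric forms indexed by independent cycles in the dual graph. A combinatorial argument -- essentially the content of Namikawa for the second Voronoi fan and Alexeev--Brunyate for the perfect cone fan -- shows that such cones are faces of the chosen fan. Granted this, the nilpotent orbit theorem yields a holomorphic extension of $\Phi_g$ on an \'etale neighborhood of $C_0$ (itself \'etale-locally a polydisc indexed by the nodes), and these local extensions automatically agree on overlaps because they are all defined by the same universal limiting nilpotent orbit construction.

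Patching then produces a proper holomorphic map $\overline{\Phi_g}:\overline{\cM_g}\to\mathfrak{A}_g^{\mathrm{Vor}}$, which is algebraic by GAGA, and likewise a morphism into $\mathfrak{A}_g^{\mathrm{Perf}}$. The descent to $\mathfrak{A}_g^{\mathrm{SBB}}$ follows by composing with the reduction morphism of the previous theorem, since the SBB image only retains the Jacobian of the normalization. The hardest step is the combinatorial matching: one must check that the monodromy cones of \emph{every} stable curve (including those with many nodes and intricate dual graph combinatorics) sit inside a single cone of the fan, a nontrivial property not satisfied by an arbitrary admissible decomposition and precisely what singles out the Voronoi and perfect cone fans among all toroidal choices.
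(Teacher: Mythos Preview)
The paper does not prove this theorem at all: it is stated in the background section with the citation \cite{Nam76, AB12} and no accompanying argument, serving purely as motivation for the non-classical completions studied later. There is therefore no ``paper's own proof'' to compare your proposal against.

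That said, your outline is a faithful sketch of the strategy in the cited references. The decisive input is exactly the one you flag at the end: for each stable curve the monodromy cone $\sigma(C_0)$, spanned by rank-one forms coming from the edges of the dual graph, must lie inside a single cone of the chosen admissible decomposition. Namikawa verifies this for the second Voronoi fan and Alexeev--Brunyate for the perfect cone fan; once that is established, the extension follows from the general extension criterion for period maps into toroidal compactifications (the nilpotent orbit theorem plus the moduli interpretation of the toroidal boundary), and the descent to $\mathfrak{A}_g^{\mathrm{SBB}}$ is immediate from the reduction morphism. Your sketch captures this correctly, though of course the combinatorial verification you defer is the entire content of those papers.
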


Besides the curves case, there are other examples showing the power of the SBB compactification and toroidal compactification in the study of moduli theory. For example, see \cite{ABE22} for elliptic K3 surfaces, \cite{ACT11} for cubic threefolds, and \cite{Laz10} for cubic fourfolds.

\subsubsection{Period mappings of general types}

Now let us assume the period map $\Phi$ is not of the classical type. This means the period domain $D$ is no longer Hermitian symmetric. In this scenario, essential obstructions for a parallel transform of the statements in the classical case to non-classical ones arise. 

\begin{theorem}\cite{GRT14}
    If $D$ is non-classical\footnote{The definition of ``classical cases" in \cite{GRT14} is a bit different from the definition we use here.} and $\Gamma\leq \mathrm{Aut}(D)^+$ is an arithmetic subgroup, then the Hodge variety $\Gamma\backslash D$ has no algebraic structure compatible with the analytic structure on $D$.
\end{theorem}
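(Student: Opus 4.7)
The plan is to argue by contradiction. Suppose $\Gamma\backslash D$ carried a quasi-projective algebraic structure compatible with its complex analytic structure. I would extract a contradiction by showing that, in the non-classical case, the algebraic dimension $a(\Gamma\backslash D)$ --- the transcendence degree over $\bC$ of the field of meromorphic functions --- is strictly smaller than $\dim_\bC D$. This is incompatible with the assumption, since every quasi-projective variety $X$ satisfies $a(X)=\dim X$.

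The first step is to pin down $\cM(\Gamma\backslash D)$. A meromorphic function on $\Gamma\backslash D$ pulls back to a $\Gamma$-invariant meromorphic function on $D$, and the natural source of such functions is automorphic forms. In the classical (Hermitian symmetric) case, Baily--Borel shows these separate points and projectively embed $\Gamma\backslash D$, so $a(\Gamma\backslash D)=\dim D$. In the non-classical case, the horizontal distribution $T^h D\subsetneq TD$ is a proper subbundle, and I would use this to cut down the supply of meromorphic functions. Concretely, I would exploit the $\mathrm{Aut}(D)^+$-orbit geometry on the compact dual $\check D\supset D$, together with the classification of Mumford--Tate subdomains, to locate positive-dimensional compact horizontal subvarieties $Z\subset D$ (in the sense of Griffiths--Schmid) whose $\Gamma$-translates organize $\Gamma\backslash D$ into a positive-dimensional analytic foliation transverse to the horizontal directions.

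The second step is to show that every meromorphic function on $\Gamma\backslash D$ is constant along this foliation. This would combine Schmid's nilpotent orbit theorem with the Hodge-theoretic rigidity of variations over compact horizontal subvarieties to argue that an automorphic form --- hence any meromorphic function --- cannot separate points on a compact $Z$. Once constancy is in hand, a standard application of Remmert's proper mapping theorem together with the theory of analytic reductions produces a factorization $\Gamma\backslash D\to Y$ through a lower-dimensional analytic space, giving $a(\Gamma\backslash D)\leq \dim Y<\dim D$ and the desired contradiction. The main obstacle, and the step where I expect most of the work to concentrate, is the foliation-constancy assertion: it requires both the precise construction of the compact horizontal subvarieties $Z$ (a nontrivial representation-theoretic input, in the style of Carlson--Toledo) and a careful argument ruling out meromorphic variation along the ``non-horizontal'' directions that distinguish the non-classical setting from the Hermitian symmetric one.
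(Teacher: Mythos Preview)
The paper does not prove this theorem; it is quoted from \cite{GRT14} as motivational background, with no argument supplied. There is therefore no ``paper's own proof'' to compare your proposal against.

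Comparing instead to the actual argument in \cite{GRT14}: your broad strategy --- locate positive-dimensional compact complex subvarieties of $D$ and use them to obstruct an algebraic structure on $\Gamma\backslash D$ --- is the correct shape. But there is a genuine confusion in how you identify those subvarieties. You call them ``compact \emph{horizontal} subvarieties,'' yet in the same sentence say their translates foliate $\Gamma\backslash D$ ``transverse to the horizontal directions''; these two descriptions are incompatible. The compact subvarieties that actually do the work are the orbits of a maximal compact subgroup $K$ of $\mathrm{Aut}(D)^+$: writing $D\simeq \mathrm{Aut}(D)^+/V$ with $V$ compact, the $K$-orbit through the base point is the projective flag variety $K/V$, which has positive dimension precisely when $D$ is non-classical, and whose tangent directions are \emph{not} horizontal. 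Searching for horizontal compact subvarieties via Mumford--Tate subdomains or Carlson--Toledo-type maps points the wrong way.

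Two smaller points. Schmid's nilpotent orbit theorem concerns asymptotics of period maps near a boundary divisor and plays no role here; the statement is purely about the internal complex geometry of $D$. And once one has the compact $K$-orbits, the constancy of holomorphic functions along their images in $\Gamma\backslash D$ is immediate from compactness and the maximum principle --- no automorphic-form machinery or ``Hodge-theoretic rigidity of variations'' is needed at that step.
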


That is to say, it is hopeless to expect the period map $\Phi:B\rightarrow \Gamma\backslash D$ to be completed in the category of algebraic varieties as in the classical cases. To tackle this difficulty, Kato--Nakayama--Usui (KNU) introduce the theory of space of nilpotent orbits \cite{KU08}, \cite{KNU10}. Their theory is a combination of classical asymptotic Hodge theory (e.g. \cite{Sch73}, \cite{CKS86}) and the theory of logarithmic geometry. In informal language, one of their main results is:
\begin{theorem}\label{Thm:KNUmaintheoreminformal}
    Assuming a $\Gamma$-strongly compatible weak fan $\Sigma$ exists, the period map $\Phi:B\rightarrow \Gamma\backslash D$ may be completed to a morphism $\Phi:\olB\rightarrow \Gamma\backslash D_{\Sigma}$ in the category of logarithmic manifolds.
\end{theorem}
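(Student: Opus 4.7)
The plan is to reduce to a local analysis at each boundary point of a log compactification and apply Schmid's nilpotent orbit theorem. After passing to a finite \'etale cover of $B$ if necessary, Borel's monodromy theorem lets us assume all local monodromies around the boundary are unipotent, and resolution of singularities provides a smooth log compactification $\olB \supset B$ with $\olB \setminus B$ a simple normal crossing divisor. Near any boundary point $b$ choose local coordinates $(z_1, \dots, z_n)$ so that the boundary is cut out by $z_1 \cdots z_k = 0$, and let $N_1, \dots, N_k \in \fg_\bQ$ denote the associated monodromy logarithms. These span a nilpotent cone $\sigma := \sum_{i=1}^k \bR_{\geq 0} N_i$, and the assumption that $\Sigma$ is $\Gamma$-strongly compatible is precisely what guarantees that $\sigma$ is a $\Gamma$-conjugate of some cone belonging to $\Sigma$.

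The heart of the argument is then Schmid's nilpotent orbit theorem, which shows that on a small polydisc neighborhood of $b$ the lifted period map to the universal cover can be written as
$$\tilde\Phi(z) = \exp\left(\sum_{i=1}^k \tfrac{\log z_i}{2\pi\sqrt{-1}}\,N_i\right)\cdot F(z),$$
where $F(z)$ is holomorphic up to and including $b$. The pair $(\sigma, F(0))$ then satisfies the nilpotent orbit conditions, and by construction of $D_\Sigma$ the equivalence class $[(\sigma, F(0))] \in \exp(\sigma_\bC)\backslash \check D$ is a point in the fiber of $D_\Sigma$ over the torus orbit labeled by $\sigma$. Descending modulo $\Gamma$ yields the candidate extension $\overline\Phi(b) \in \Gamma\backslash D_\Sigma$. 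Strong compatibility is exactly the hypothesis ensuring that this assignment is well-defined and independent of local chart: transitions between overlapping coordinate neighborhoods permute the $N_i$ only by $\Gamma$-actions that preserve $\Sigma$ cone-wise.

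The final task is to upgrade this set-theoretic extension to a morphism in the category of logarithmic manifolds. The log structure on $\olB$ is the canonical one associated with the snc divisor $\oBbB$, locally generated by $\log z_1, \dots, \log z_k$. On $\Gamma\backslash D_\Sigma$ the log structure is built from the toric coordinates along each cone of $\Sigma$, and the explicit formula for $\tilde\Phi$ above shows directly that these toric coordinates pull back to $\bZ$-linear combinations of the $\log z_i$ plus holomorphic terms. In particular $\overline\Phi$ is a morphism of log manifolds wherever it is continuous. The main obstacle is precisely this continuity across strata of differing codimension: when a sequence in $B$ approaches a deeper stratum the relevant monodromy cone may grow, and the topology of $D_\Sigma$ is delicate near such transitions. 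This is the regime controlled by the $\mathrm{SL}_2$-orbit theorem of Cattani--Kaplan--Schmid, which provides uniform asymptotic control as the monodromy weights degenerate and yields the required continuity, and hence log analyticity, at every boundary point.
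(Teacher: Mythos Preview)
The paper does not give its own proof of this statement: Theorem~\ref{Thm:KNUmaintheoreminformal} is presented as an informal summary of results of Kato--Nakayama--Usui, with the formal versions recorded later as Theorem~\ref{Thm:kumainthmA}, Theorem~\ref{Thm:kumainthmB}, and Corollary~\ref{cor:extendingperiodmapKNU}, all cited from \cite{KU08,KNU10}. So there is no proof in the paper to compare against; what one can do is check your outline against the formal statements.

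Your sketch contains a genuine misidentification of hypotheses. You write that ``the assumption that $\Sigma$ is $\Gamma$-strongly compatible is precisely what guarantees that $\sigma$ is a $\Gamma$-conjugate of some cone belonging to $\Sigma$.'' That is not what strong compatibility says. In the paper's Definition~\ref{defstronglycompatible}, $\Gamma$-strong compatibility is a condition on $\Sigma$ alone: closure under $\mathrm{Ad}(\Gamma)$ together with a lattice/monoid condition on each cone. It says nothing about whether the local monodromy cones $\sigma_b$ arising from the particular period map $\Phi$ lie in (or inside a cone of) $\Sigma$. That is a \emph{separate} compatibility hypothesis, formalized in Definition~\ref{def:compatibilityweakfan}: for each $b\in\olB$ there must exist a unique minimal $\tau_b\in\Sigma$ with $\sigma_b\subset\tau_b$ and $(\tau_b,F_b)$ a nilpotent orbit. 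The informal Theorem~\ref{Thm:KNUmaintheoreminformal} suppresses this, but the formal extension result (Corollary~\ref{cor:extendingperiodmapKNU}) explicitly requires it. Without this hypothesis your local extension $\overline\Phi(b)=[(\sigma,F(0))]$ need not land in $D_\Sigma$ at all, so the argument does not go through as stated.

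Beyond that, the outline is reasonable in spirit but the last paragraph hides the bulk of the work. Showing that the set-theoretic extension is continuous and a morphism of logarithmic manifolds is exactly the hard part of \cite{KU08}; it is carried out there via the torsor spaces $E_\sigma\to\Gamma_\sigma\backslash D_\sigma$ (reviewed in Section~\ref{Sec08} of the paper) rather than by a direct appeal to pullback of log forms, and the SL$_2$-orbit theorem enters in a much more structured way than ``yields the required continuity.'' So even after correcting the hypothesis issue, the final step would need substantial expansion to count as a proof.
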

See Section \ref{Sec:KNUtheory} for the formal definitions and statements. Regarding the existence of $\Sigma$, well-known results are the following:
\begin{itemize}
    \item When $B$ is a quasi-projective curve, such $\Sigma$ exists and the construction is trivial.
    \item When $D$ is Hermitian symmetric, we may choose $\Sigma$ in a way such that $\Gamma\backslash D_{\Sigma}$ is exactly the toroidal compactification constructed by \cite{AMRT10}. See \cite{Hay14} for a comparison between these two compactifications in the Siegel space case.
    \item When the monodromy group $\Gamma$ is abelian, such $\Sigma$ exists \cite[Chap. 4]{KU08}.
\end{itemize}

Recently, Colleen Robles and the author showed that such object $\Sigma$ also exists when $B$ is a quasi-projective surface. The construction is in general highly non-trivial:
\begin{theorem}\cite{DR23}\label{Thm:DengRoblesMainTheorem}
    When $B$ is a quasi-projective surface, there exists a projective completion $\olB(\Sigma)$ and a KNU space $\Gamma\backslash D_{\Sigma}$ such that the period map $\Phi: B\rightarrow \Gamma\backslash D$ can be completed to $\overline{\Phi}_{\Sigma}: \olB(\Sigma)\rightarrow \Gamma\backslash D_{\Sigma}$ in the category of logarithmic manifolds, and $\mathrm{Img}(\overline{\Phi}_{\Sigma})$ admits a structure of compact complex algebraic space.
\end{theorem}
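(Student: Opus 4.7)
The plan is to reduce the statement to the construction of a $\Gamma$-strongly compatible weak fan $\Sigma$ on a projective completion of $B$; once such $\Sigma$ exists, Theorem \ref{Thm:KNUmaintheoreminformal} directly provides the logarithmic extension $\overline{\Phi}_\Sigma$. I would begin with a log smooth projective completion $\hat B$ of $B$ with simple normal crossing boundary $\hat B\setminus B=\bigcup_i D_i$, and extract the local monodromy data at each boundary stratum: a single nilpotent $N_i\in \fg_\bQ$ (well-defined up to $\Gamma$-conjugation) along each component $D_i$, and a commuting pair $(N_i,N_j)$ at each node $D_i\cap D_j$. The rays $\bR_{\geq 0}N_i$ and two-dimensional cones $\bR_{\geq 0}N_i+\bR_{\geq 0}N_j$ are then the natural candidates for cones of $\Sigma$, and their admissibility as generators of nilpotent orbits is guaranteed by Schmid's nilpotent orbit theorem.

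The heart of the argument is globalizing these local cones into a genuine fan: any two cones must intersect only along a common face, and the whole structure must be preserved under the $\Gamma$-action. This is where the surface hypothesis is essential, since the boundary strata form a $1$-complex and the candidate cones are indexed by a locally finite graph. I would proceed by iteratively modifying $\hat B$ birationally over $B$: whenever two adjacent $2$-cones at distinct nodes along the same boundary curve fail to be face-compatible, blow up the point where the incompatibility is localized so as to subdivide the ambiguity into legitimate faces of a common refinement. The principal obstacle, and where the bulk of the technical work sits, is showing that this process terminates in a projective model $\olB(\Sigma)$ equipped with a bona fide weak fan $\Sigma$; I anticipate handling termination via a descent argument on a combinatorial invariant of the boundary monodromy graph, with the rational polyhedral control supplied by the monodromy theorem ensuring that only finitely many cone shapes can occur.

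With $\Sigma$ constructed, $\Gamma$-strong compatibility is verified locally against the nilpotent orbits attached to the boundary strata, and Theorem \ref{Thm:KNUmaintheoreminformal} yields the logarithmic extension $\overline{\Phi}_\Sigma:\olB(\Sigma)\to \Gamma\backslash D_\Sigma$. For the algebraicity of $\mathrm{Img}(\overline{\Phi}_\Sigma)$, I would apply Stein factorization to decompose $\overline{\Phi}_\Sigma$ as $\olB(\Sigma)\to \wp'_\Sigma \to \mathrm{Img}(\overline{\Phi}_\Sigma)$; since $\olB(\Sigma)$ is projective, the Stein factor $\wp'_\Sigma$ is Moishezon, and the subsequent finite map transports the Moishezon property to $\mathrm{Img}(\overline{\Phi}_\Sigma)$. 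Artin's algebraization theorem then promotes this compact Moishezon analytic structure to the desired compact complex algebraic space structure on the image.
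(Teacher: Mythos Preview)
This theorem is not proved in the present paper: it is quoted as the main result of \cite{DR23} and serves only as background and motivation. There is therefore no proof in this paper to compare your proposal against. The paper's own contribution (Theorem~\ref{Thm:MainTHMformal}) deliberately avoids constructing a global $\Gamma$-compatible weak fan, precisely because that construction is so delicate; cf.\ Section~2.3.1, where the author remarks that in \cite{DR23} neither $\Sigma$ nor $\olB(\Sigma)$ can in general be described explicitly.

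As a sketch of what \cite{DR23} might do, your outline is in the right spirit but understates the main difficulty. The cones you write down from local monodromy are only well-defined up to $\Gamma$-conjugacy, and a weak fan must be closed under the full $\Gamma$-action (Definition~\ref{defstronglycompatible}). Thus the collection you must control is not indexed by the finite boundary graph of $\hat B$ but by its $\Gamma$-orbit, which is typically infinite; the weak-fan condition~(2$'$) of Definition~\ref{deffanweakfan} then has to be checked for \emph{all} pairs of $\Gamma$-conjugate cones that happen to share a nilpotent orbit. Your blow-up/termination heuristic addresses only the finitely many cones visible on $\hat B$ and does not explain why subdividing one cone does not create new failures among its infinitely many $\Gamma$-translates. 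This $\Gamma$-equivariance is exactly what the paper's notion of CMCX is designed to sidestep, and it is the genuine technical core of \cite{DR23}.

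For the final step, your Moishezon/Artin argument is essentially what the paper packages as Theorem~\ref{Thm:analyticimagepropermap} (due to Usui): properness of $\overline{\Phi}_\Sigma$ on a compact source already gives the algebraic-space structure on the image, so no separate appeal to Artin is needed.
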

However, in \cite{DR23} the authors were not able to show the similar results when $B$ has dimension $\geq 3$ due to some essential technical obstructions. 

%-----------------------
\subsection{Main results}
%-----------------------

The main purpose of this paper is to construct a completion for a general period map with significant geometric and Hodge-theoretic meaning. By geometric we mean this completion should be done in some subcategory of complex analytic spaces, while by Hodge-theoretic we mean the extended period map should record the boundary asymptotic behaviors of the original period map.

More precisely, given a period map $\Phi: B\rightarrow \Gamma\backslash D$, one may factor the period map $\Phi$ via the diagram:
\begin{equation}
    B\xrightarrow{\Phi'} \wp'\rightarrow \wp:=\mathrm{Img}(\Phi)\hookrightarrow\Gamma\backslash D
\end{equation}
where $ B\xrightarrow{\Phi'} \wp'$ is the Stein factorization of $\Phi$ and is the analytification of a morphism in the category of quasi-projective varieties \cite{BBT22}. Instead of finding a completion of $\Phi:B\rightarrow \Gamma\backslash D$ which requires an enlarged space of the entire Hodge variety $\Gamma\backslash D$, we will complete $\Phi': B\rightarrow \wp'$ instead.

One of the main ingradients in our construction is the combinatorial monodromy complex (CMCX), see Section \ref{Sec04}. Roughly speaking, fix a completion $\hat{B}$ of $B$, a CMCX $\cT$ is a collection of rational polyhedral complexes indexed by the strata of $\hat{B}$. For every boundary stratum $Z_I^*$ of $\hat{B}$, there is a canonically associated combinatorial monodromy cone (CMC) $\psigma_I$ (Section \ref{Sec:CMC}), and $\cT$ associates each $I$ a polyhedral complex $\Sigma_{\cT}(I)$ supported on $\overline{\psigma_I}$ with some compatibility conditions (Definition \ref{def:CMCX}).

\begin{remark}
CMCX is an analog of a toroidal fan used by \cite{AMRT10} or a $\Gamma$-strongly compatible weak fan used by \cite{KU08} and \cite{KNU10} which does not require any compatibility conditions with the global monodromy group $\Gamma$.
\end{remark}

The construction of a CMCX depends on both the $\bZ$-polarized variation of Hodge structures and a choice of projective completion $\hat{B}$ of $B$ with simple normal crossing (SNC) divisors. Assuming both ingredients, we have:
\begin{theorem}\label{Thm:CMCXexists}
For a fixed $\bZ$-PVHS $\bV\rightarrow B$ over a quasi-projective $B$ and a SNC completion $\hat{B}$ of $B$, a combinatorial monodromy complex always exists. 
\end{theorem}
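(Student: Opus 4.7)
The plan is to construct the CMCX by induction on the cardinality $|I|$ indexing the boundary strata $Z_I^*$, proceeding from small to large $|I|$ (equivalently, from the generic boundary to the deepest strata). The SNC hypothesis on $\hat{B}$ ensures the strata poset is combinatorially clean: for every inclusion $I\subset J$ (so $Z_J^*\subset\overline{Z_I^*}$), the commutativity of the local monodromies around the divisor components yields a canonical embedding of the combinatorial monodromy cone $\psigma_I$ as a face of $\psigma_J$. The compatibility condition in Definition \ref{def:CMCX} should then amount to requiring, for any such $I\subset J$, that the complex $\Sigma_\cT(J)$ restrict along this face embedding to $\Sigma_\cT(I)$, reducing the construction to a polyhedral gluing-and-extension problem.

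The base case $|I|=1$ is trivial: the CMC $\psigma_{\{i\}}$ is a ray generated by a single monodromy logarithm $N_i$, and $\overline{\psigma_{\{i\}}}$ carries an essentially unique rational polyhedral structure, which we take as $\Sigma_\cT(\{i\})$. For the inductive step, suppose $\Sigma_\cT(I')$ has been constructed for every $|I'|<k$ subject to the compatibility conditions, and fix $I$ with $|I|=k$. On the closed subset $\partial'\overline{\psigma_I}:=\bigcup_{I'\subsetneq I}\overline{\psigma_{I'}}$ of $\partial\overline{\psigma_I}$, the inductive complexes $\Sigma_\cT(I')$ glue along their pairwise intersections, which are themselves controlled by $\Sigma_\cT(I'\cap I'')$ via compatibility, into a coherent rational polyhedral complex. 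I would then complete the construction by extending this partial boundary complex to a rational polyhedral complex on all of $\overline{\psigma_I}$ via a standard cone-over-boundary procedure: pick a rational interior point of $\psigma_I$ and take joins with the given boundary complex, extending arbitrarily over any components of $\partial\overline{\psigma_I}$ not already covered.

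The main obstacle, in my view, is not the polyhedral extension step — which is classical in convex geometry — but the careful setup required to make sense of the face embeddings $\overline{\psigma_I}\hookrightarrow\overline{\psigma_J}$. A priori, the CMC associated to a boundary stratum depends on a choice of local monodromy frame and could shift under conjugation as the base point varies within $Z_I^*$; establishing that the embedding into $\psigma_J$ is canonical, and therefore compatible with the inductive gluing, is presumably the technical heart of the argument and the role of Section \ref{Sec04}. Once these foundational identifications are in place, the induction runs through without further obstruction. The crucial simplification over the classical problem of constructing $\Gamma$-strongly compatible weak fans, which is the obstacle in going beyond \cite{DR23} in dimension three or higher, is precisely that no invariance under the global monodromy group $\Gamma$ is imposed on the CMCX, so the extension step becomes a purely local convex-geometric operation rather than a global combinatorial search.
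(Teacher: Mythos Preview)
Your inductive scheme rests on the claim that for $I'\subsetneq I$ the adjacent map $\xi_{I,I'}$ realizes $\psigma_{I'}$ as a \emph{face} of $\psigma_I$. This is false in general, and the failure is exactly what makes the theorem nontrivial. The CMC $\psigma_I$ is the image of the simplicial cone $\bQ_{\ge0}^{|I|}$ under a linear quotient by $\mathrm{Ker}(\lambda_I)$; whenever the local monodromy logarithms $\{N_i\}_{i\in I}$ satisfy a nontrivial linear relation, the image of some $\psigma_{I'}$ will meet the relative interior of $\psigma_I$ rather than lie on its boundary. For a concrete model, if $|I|=3$ and $N_3=N_1+N_2$, then $\psigma_I$ is the two-dimensional cone $\langle N_1,N_2\rangle$ and the ray $\psigma_{\{3\}}$ sits in its interior. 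The paper isolates the good case where your picture is correct under the name \emph{simplicial} (Definition preceding Theorem \ref{Thm:simplicialCMCX}), and in that case the trivial CMCX $\cT_0$ already works; Section \ref{Sec09} exists precisely to handle the non-simplicial situation.

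Once the face assumption fails, both steps of your induction break. First, the gluing on $\bigcup_{I'\subsetneq I}\overline{\psigma_{I'}}$ is not controlled by the inductive data: the intersection $\psigma_{I'}\cap\psigma_{I''}$ inside $\psigma_I$ can strictly contain $\psigma_{I'\cap I''}$, so compatibility with $\Sigma_\cT(I'\cap I'')$ does not force $\Sigma_\cT(I')$ and $\Sigma_\cT(I'')$ to agree on their overlap. Second, since some $\psigma_{I'}$ can lie in the interior of $\psigma_I$, the cone-over-boundary extension from an interior point produces cones that cut across the already-chosen complexes rather than restricting to them. The paper's proof in Section \ref{Sec09} addresses both issues: it introduces \emph{hyperintersections} $\psigma_I\cap_\gamma\psigma_J$ indexed by paths in the strata poset to capture all the ways subcones can overlap (proving there are only finitely many via Theorem \ref{Thm:finitehypint}), runs the induction in the opposite direction (from maximal $I$ downward), and then passes through an intermediate notion of \emph{semicomplex} with a canonical complexification to reconcile the competing subdivisions. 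Your diagnosis that the difficulty lies in the well-definedness of the embeddings under conjugation is off the mark; the embeddings are canonical, but they are not face embeddings.
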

Theorem \ref{Thm:CMCXexists} will be proved in general in Section \ref{Sec09}. The techniques are purely combinatorial. In Section \ref{Sec04} we will see that under some mild assumptions, the construction of the CMCX is evident.

Now we may suppose a CMCX $\cT$ corresponds to the $\bZ$-PVHS $\bV\rightarrow B$ and a completion $\hat{B}$ of $B$ is given. Let $\olB\rightarrow \hat{B}$ be a logarithmic modification of the pair $(\hat{B}, B)$ (See Section \ref{Sec04} for details). We may also define the compatibility between $\olB\rightarrow \hat{B}$ and $\cT$ (Definition \ref{def:CMCXcompatible}). For a given $\cT$ a compatible logarithmic modification always exists (Theorem \ref{Thm:CMCXgiveslogmodification}). We are now ready to introduce the main theorem of this paper.

\begin{theorem}\label{Thm:MainTHMformal}
    Let $B\xrightarrow{\Phi'}\wp'\rightarrow \wp$ be the Stein factorization of $B\xrightarrow{\Phi}\wp$ and $\hat{B}$ be a fixed completion of $B$ with SNC boundary divisors. Let $\cT$ be a combinatorial monodromy complex associated to $\hat{B}$. There exists a completion $\olB$ of $B$ compatible with $\cT$, a completion $\overline{\wp}$ of $\wp'$, such that we have the following extension in the category of complex analytic spaces (and consequently, algebraic spaces):
    \begin{equation}\label{Fig:extendedperiodmap}
\begin{tikzcd}
B\arrow[d] \arrow[r, "\Phi'"] & \wp' \arrow[d]\\
\olB \arrow[r, "\overline{\Phi}"] & \overline{\wp}
\end{tikzcd}
\end{equation}
Moreover, we have the following properties:
\begin{enumerate}
    \item We may choose $\olB$ to be smooth projective with $\olB-B$ being a simple normal crossing divisor;
    \item For any $b'\in \olB$ lying above $b\in Z_I^*\subset \hat{B}$, there exists a combinatorial monodromy cone $\ptau_{b'}\in \Sigma_{\cT}(I)$, a nilpotent orbit $(\tau_{b'}, F_{b'})$ with $\tau_{b'}\simeq \ptau_{b'}$, such that $\overline{\Phi}(b')$ records the information of $(\tau_{b'}, F_{b'})$ up to some finite data.
\end{enumerate}
\end{theorem}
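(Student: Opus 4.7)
The plan is to build $\overline{\Phi}$ by patching local KNU-style extensions on a suitable open cover of $\olB$, and to define $\overline{\wp}$ as an analytic quotient of the disjoint union of the resulting local images. By Theorem \ref{Thm:CMCXgiveslogmodification} a log modification $\olB \to \hat{B}$ compatible with $\cT$ exists with $\olB$ smooth projective and $\olB - B$ simple normal crossing, which already delivers property (1). The compatibility with $\cT$ means that for every $b' \in \olB$ lying above $b \in Z_I^*$, the relevant local monodromy logarithms lie in a single cone $\ptau_{b'} \in \Sigma_{\cT}(I)$.

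First I would extract from this compatibility a finite open cover $\olB = \bigcup_k W_k$ on which the local monodromy is carried by a single cone $\ptau_k$ of $\cT$. On each such $W_k$ no global fan is needed: the faces of $\ptau_k$ form a weak fan trivially strongly compatible with the local monodromy subgroup $\Gamma_{\ptau_k} \subset \Gamma$, so Theorem \ref{Thm:KNUmaintheoreminformal} applies and supplies an extension $\Phi_k: W_k \to \Gamma_{\ptau_k}\backslash D_{\ptau_k}$ in the category of logarithmic manifolds. Composing with the Stein factorization of $\Phi_k$ produces a local analytic space $\wp'_k$ completing $\wp'|_{B \cap W_k}$ together with a morphism $\Phi'_k: W_k \to \wp'_k$ extending $\Phi'|_{B \cap W_k}$.

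Next I would define $\overline{\wp}$ as the quotient of $\bigsqcup_k \wp'_k$ by the equivalence relation identifying points whose preimages in $W_k \cap W_{k'}$ agree, and let $\overline{\Phi}$ be induced by the $\Phi'_k$. The topological gluing (Section \ref{Sec05}) should follow from the fact that, thanks to the CMCX compatibility on the faces common to adjacent cones, the local pieces agree set-theoretically on overlaps and the resulting quotient is Hausdorff. Upgrading this to an analytic morphism (Section \ref{Sec06}) is the main obstacle: the targets $\Gamma_{\ptau_k}\backslash D_{\ptau_k}$ and $\Gamma_{\ptau_{k'}}\backslash D_{\ptau_{k'}}$ are \emph{a priori} different KNU spaces, so one has to show that the transition between the two local logarithmic structures on $W_k \cap W_{k'}$ descends to a biholomorphism between the relevant open pieces of $\wp'_k$ and $\wp'_{k'}$, and that these biholomorphisms satisfy a cocycle condition endowing $\overline{\wp}$ with a well-defined analytic structure.

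Property (2) would then be essentially the output of the local construction. For $b' \in W_k$ lying over $b \in Z_I^*$, the KNU extension $\Phi_k$ sends $b'$ to a class in $\Gamma_{\ptau_{b'}}\backslash D_{\ptau_{b'}}$, which up to the finite ambiguity introduced by passing to the Stein factorization and the quotient by $\Gamma_{\ptau_{b'}}$ corresponds to a nilpotent orbit $(\tau_{b'}, F_{b'})$ with $\tau_{b'}\simeq \ptau_{b'}$; independence of the chart $W_k$ is built into the CMCX gluing. The hard part will therefore be the analytic patching in the previous step, because the local KNU targets indexed by different cones of $\cT$ are not comparable in any obvious way and only become so after taking the Stein factorization; controlling this comparison on overlaps and verifying that the resulting $\overline{\wp}$ is a genuine compact complex analytic space is where the bulk of the technical work lies.
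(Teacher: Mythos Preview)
Your overall strategy---local KNU extensions on a finite cover, then patching---matches the paper's, and you correctly invoke Theorem~\ref{Thm:CMCXgiveslogmodification} for property~(1). However, there is a real gap in how you produce the cover, and your analytic gluing step diverges from (and is harder than) what the paper actually does.

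\textbf{The gap: properness.} You propose to take $W_k$ so that ``the local monodromy is carried by a single cone $\ptau_k$,'' then apply Stein factorization to $\Phi_k$. But Stein factorization, and the result that the image $\Phi_k(W_k)$ is a complex analytic space (Theorem~\ref{Thm:analyticimagepropermap}), both require $\Phi_k$ to be \emph{proper}. A naive coordinate cover gives local KNU maps that are essentially never proper: fibres of $\Phi_k$ can escape $W_k$. The paper's Section~\ref{Sec05} is devoted precisely to manufacturing a cover with proper local maps. The mechanism is the notion of a \emph{connected limiting fibre} (CLF): one shows (Lemma~\ref{lem:localmonodromyaroundCLF}) that the monodromy around an entire CLF $A$ is the abelian group $\Gamma_{\tau_J}$, so the local KNU maps patch over a full neighbourhood $B_A$ of $A$ (Corollary~\ref{cor:localKNUmaparoundCLF}); then one shrinks $B_A$ to a union of compact fibres $W_A$, guaranteeing properness. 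The Nilpotent Cone Closure machinery and Lemma~\ref{lem:bouondaryconditionofCLF} are what make this work across adjacent strata. Without this, your local $\wp'_k$ need not exist as analytic spaces.

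\textbf{The analytic structure.} You plan to build $\overline{\wp}$ by gluing the $\wp'_k$ along biholomorphisms on overlaps and then checking a cocycle condition. The paper avoids this entirely: once each $\olphi_k:W_k\to V_k$ is proper with connected fibres, the fibre-coincidence property (Theorem~\ref{Thm:localKNUmaps}(4)) shows that the relations $W_k\times_{V_k}W_k$ assemble into a single \emph{proper holomorphic equivalence relation} $E\subset\olB\times\olB$ (Proposition~\ref{Prop:PatchingEquivRelations}). Then $\overline{\wp}:=\olB/E$ is a complex analytic space directly by Grauert's theorem~\cite{Gra83}, with no need to compare the distinct KNU targets $\Gamma_{\tau_k}\backslash D_{\tau_k}$ at all. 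Lemma~\ref{Lemma:extendingoriginalperiodmap} then identifies $E\cap(B\times B)$ with the fibres of $\Phi'$, giving the diagram. Your gluing route is not wrong in principle, but it is strictly harder and you would still need properness to get off the ground.
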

\begin{remark}
    Indeed, the nilpotent orbit $(\tau_{b'}, F_{b'})$ in Theorem \ref{Thm:MainTHMformal} is determined by the classical theory of the nilpotent orbit theorem by Schmid in \cite{Sch73}. See Section \ref{Sec03} for an overview.
\end{remark}
{
\begin{remark}
     In particular, this gives a new proof of the algebraicity of period images, proven in \cite{BBT22}, up to normalization.
\end{remark}
}

The proof of Theorem \ref{Thm:MainTHMformal} will be done in two steps: In Section \ref{Sec05} we construct the map $\overline{\Phi}: \olB\rightarrow \overline{\wp}$ on the set-theoretic level and show $\overline{\Phi}: \olB\rightarrow \overline{\wp}$ has a natural structure in the category of complex analytic spaces.
In the most general cases, describing the construction of $\overline{\Phi}$ and the completion $\olB$ of $B$ could be complicated on the computational side. However, under some certain assumptions the constructions are elementary:

\begin{theorem}[Theorem \ref{Thm:simplicialCMCX}]
    If the completion $\olB$ of $B$ satisfies the simplicial condition with respect to the $\bZ$-PVHS $\bV\rightarrow B$, then we may take $\hat{B}=\olB$ and $\overline{\Phi}$ maps any $b\in \olB$ to its associated nilpotent orbit class given by Schmid's nilpotent orbit theorem. In particular, this always holds when $B$ is a quasi-projective curve or surface.
\end{theorem}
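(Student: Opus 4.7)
The plan is to leverage the fact that under the simplicial condition no subdivision of any combinatorial monodromy cone is required, so that the passage from $\hat{B}$ to $\olB$ in Theorem \ref{Thm:MainTHMformal} collapses to the identity, and the output of the main theorem reduces to the raw output of Schmid's nilpotent orbit theorem.

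First I would introduce the \emph{minimal} CMCX $\cT_{\min}$ on $\hat{B}$, defined by declaring $\Sigma_{\cT_{\min}}(I)$ to be the polyhedral complex of faces of $\psigma_I$ (so $\psigma_I$ itself is the unique maximal cell). Assuming the simplicial condition---that every $\psigma_I$ is a simplicial cone---this assignment satisfies the compatibility axioms of Definition \ref{def:CMCX}: whenever $J \subset I$, the stratum $Z_J^*$ lies in the closure of $Z_I^*$, the cone $\psigma_J$ sits naturally as a face of $\psigma_I$ spanned by the monodromy logarithms indexed by $J$, and the face lattices match under this inclusion. No subdivision is introduced, so the identity map $\olB := \hat{B} \to \hat{B}$ is a log modification compatible with $\cT_{\min}$ in the sense of Definition \ref{def:CMCXcompatible}, and item (1) of Theorem \ref{Thm:MainTHMformal} is automatic since $\hat{B}$ was chosen smooth projective with SNC boundary.

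Next I would apply Theorem \ref{Thm:MainTHMformal} to the data $(\hat{B},\cT_{\min})$. For $b \in Z_I^*$, the only cone in $\Sigma_{\cT_{\min}}(I)$ whose relative interior can contain the relevant monodromy data is $\psigma_I$ itself, so $\ptau_b = \psigma_I$. Unpacking item (2), the associated nilpotent orbit $(\tau_b, F_b)$ is generated by the local monodromy logarithms $N_i$ of the boundary components through $b$ together with any limiting Hodge filtration $F_b$ produced from the local period map, which is precisely the nilpotent orbit attached to $b$ by Schmid's nilpotent orbit theorem (reviewed in Section \ref{Sec03}). Thus $\overline{\Phi}(b)$ records this orbit up to the finite data already acknowledged in the main theorem.

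Finally I would verify the simplicial hypothesis in the curve and surface cases. At a boundary point of a quasi-projective curve, $\psigma_I$ is the ray $\bR_{\geq 0}N$ generated by a single nilpotent $N$, trivially simplicial. On a surface, a boundary stratum is either a generic point of a single component (yielding a ray again) or the transverse intersection of two components, in which case $\psigma_I = \bR_{\geq 0}N_1 + \bR_{\geq 0}N_2$; this is a two-dimensional cone with exactly two extremal rays and is therefore simplicial. The main obstacle is really the verification that $\cT_{\min}$ satisfies the CMCX axioms, but this reduces to the functoriality of monodromy logarithms under restriction to boundary strata, which follows from the local description of a PVHS near an SNC divisor via Schmid's theorem; once that is granted the rest of the argument is formal.
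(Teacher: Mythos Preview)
Your proposal is correct and follows the paper's approach: define the minimal CMCX $\cT_0$ via the face lattice of each $\psigma_I$, check the CMCX axioms through the adjacent maps (which the paper dispatches as ``straight-forward to verify''), note that the identity $\olB=\hat B$ is then a compatible logarithmic modification (recorded separately in the paper as ``directly follows from the definition''), and read off the Hodge-theoretic description from part~(2) of Theorem~\ref{Thm:MainTHMformal}. One small caveat on the surface verification: at a double point the cone $\psigma_I$ need not be two-dimensional, since $N_1,N_2$ may be linearly dependent or one may vanish, but any cone generated by at most two rays is automatically simplicial with each generating ray a face, so your conclusion is unaffected.
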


Motivated from the construction of \cite{AMRT10} and the quasi-projectivity of period images \cite{BBT22}, it is natural to conjecture:

\begin{conjecture}\label{Conj:ProjectiveCompletion}
    The construction in Theorem \ref{Thm:MainTHMformal} can always be done in a way such that $\overline{\wp}$ is a projective variety.
\end{conjecture}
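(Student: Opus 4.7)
The plan is to leverage the quasi-projectivity of $\wp'$ established in \cite{BBT22} together with the logarithmic structure provided by the CMCX. Since Theorem \ref{Thm:MainTHMformal}(1) already furnishes a smooth projective $\olB$ and $\wp'$ is the analytification of a quasi-projective variety, the conjecture reduces to endowing $\overline{\wp}$ with a compatible algebraic structure. I would do this by constructing a semi-ample line bundle on $\overline{\wp}$ whose high tensor powers define a finite morphism to some projective space, and then invoking GAGA to algebrize $\overline{\Phi}$.

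First I would extend the Griffiths line bundle $\Lambda = \bigotimes_{p} (\det \cF^p)^{\otimes a_p}$ from $\wp'$, where it is nef and big by the work leading to \cite{BBT22}, to a line bundle $\overline{\Lambda}$ on $\overline{\wp}$. The natural candidate is the canonical log extension of the Hodge bundles coming from Schmid's nilpotent orbit theorem, whose fibers at boundary points are precisely the data $(\tau_{b'}, F_{b'})$ recorded by $\overline{\Phi}$ in Theorem \ref{Thm:MainTHMformal}(2). I would then verify, using the compatibility conditions encoded by the CMCX $\cT$ and Theorem \ref{Thm:CMCXgiveslogmodification}, that these local log-extensions patch into a well-defined line bundle on $\overline{\wp}$. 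The third step is to establish semi-ampleness of $\overline{\Lambda}$ by extending the Griffiths--Schmid curvature positivity across the boundary using the logarithmic structure on $\overline{\wp}$. Once this is achieved, the Stein factorization of the map $|m\overline{\Lambda}|$ for $m \gg 0$ identifies $\overline{\wp}$ with a projective variety, and the algebraicity of $\overline{\Phi}: \olB \to \overline{\wp}$ then follows from GAGA applied to the projective target together with the algebraicity of $\Phi'$.

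The main obstacle I expect is the third step: establishing enough positivity of $\overline{\Lambda}$ at every boundary stratum. On the interior one has Griffiths--Schmid positivity and the bigness of $\Lambda$ on $\wp'$, but at a boundary point the Hodge filtration degenerates and the hermitian metric on $\Lambda$ acquires logarithmic singularities. Controlling these singularities requires an analogue of Mumford's strict convexity condition for toroidal fans in \cite{AMRT10}: one must refine the CMCX so that the piecewise linear functions governing the asymptotic behavior of $\overline{\Lambda}$ along each cone $\ptau \in \Sigma_{\cT}(I)$ are strictly convex, thereby converting the logarithmic poles into genuine positivity after a further birational modification of $\olB$. Upgrading the existence result Theorem \ref{Thm:CMCXexists} to produce such a \emph{polarized} CMCX is, I believe, the central technical issue; proving it should combine the combinatorial arguments of Section \ref{Sec09} with input from admissible variations of mixed Hodge structures and the logarithmic asymptotic analysis of \cite{CKS86, KNU10}, and may also require invoking an equivariant resolution of the secondary fan associated to the monodromy cones.
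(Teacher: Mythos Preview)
The statement you are attempting to prove is labeled as a \emph{conjecture} in the paper, and the paper does \emph{not} provide a proof of it. The author explicitly states ``In Section \ref{Sec07} we will see some examples in which this conjecture holds,'' and Section \ref{Sec07} only establishes partial evidence: Proposition \ref{Prop:ProjectiveCrit} gives a sufficient criterion in the surface case (no positive-dimensional CLF in $B^{\circ}$), verified for the Hosono--Takagi family and the families of \cite{CD24}. There is therefore no ``paper's own proof'' against which to compare your proposal.

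Your outline is a plausible line of attack on the open conjecture, and you correctly identify the crux: extending the Griffiths line bundle across the boundary and proving semi-ampleness there. However, you should be aware that this is precisely the step that has resisted all known approaches in the non-classical setting, and the paper makes no claim to resolve it. The analogy with Mumford's strict convexity condition is apt but incomplete: in the classical case one has a global $\Gamma$-compatible fan on which convexity can be imposed, whereas a CMCX is only a collection of local polyhedral complexes with no global monodromy compatibility, so there is no obvious ``secondary fan'' on which to run an equivariant resolution. Your proposal also glosses over a more basic issue: the Hodge line bundle naturally extends to $\olB$, not to $\overline{\wp}$, and descending it requires knowing that it is trivial along every fiber of $\overline{\Phi}$, including the boundary CLF's --- this is not addressed by Theorem \ref{Thm:MainTHMformal}(2), which only records the nilpotent orbit up to finite data. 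In short, your proposal is a reasonable research program, but it is not a proof, and the paper does not claim one exists.
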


In Section \ref{Sec07} we will see some examples in which this conjecture holds.

\subsection{Applications and related works}

We introduce some applications of our main result as well as its interaction with several related works.

\subsubsection{Comparison with \cite{DR23}}

Suppose now $B$ is a quasi-projective surface. The completion
\[\overline{\Phi}_{\Sigma}: \olB(\Sigma)\rightarrow \Gamma\backslash D_{\Sigma}
\]
given by Theorem \ref{Thm:DengRoblesMainTheorem} in \cite{DR23} is defined on some logarithmic modification $\olB(\Sigma)$ of $\olB$. Due to the fact that the global monodromy group $\Gamma$ is mostly immeasurable, it is in general not possible to describe the weak fan $\Sigma$ or the projective surface $\olB(\Sigma)$ over which the map $\overline{\Phi}_{\Sigma}$ lives in.

The Theorem \ref{Thm:MainTHMformal} and Theorem \ref{Thm:simplicialCMCX} gives another completion
\[
\overline{\Phi}:\olB\rightarrow \overline{\wp}
\]
which does not require the existence of any extension of the entire Hodge variety $\Gamma\backslash D$. In this case, the completion $\olB$ can be any smooth completion of $B$ with $\olB-B$ a normal crossing divisor. 

% \begin{theorem}
%     The following diagram exists and belongs to the category of complex analytic spaces\footnote{Strictly speaking, we need to proceed the Stein factorization on $\overline{\Phi}_{\Sigma}$.}:
%     \begin{equation}
%     \begin{tikzcd}
%         \olB(\Sigma) \arrow[d] \arrow[r, "\overline{\Phi}_{\Sigma}"] \arrow[dr, dashed] & \overline{\wp}(\Sigma)\subset \Gamma\backslash D_{\Sigma}\arrow[d]\\
%         \olB \arrow[r, "\overline{\Phi}"] & \overline{\wp}
%     \end{tikzcd}
%     \end{equation}
% \end{theorem}
As a summary, though the main theorem of this paper does not provide a completed Hodge variety $\Gamma\backslash D_{\Sigma}$ which plays an important role in KNU's theory \cite{KU08}, \cite{KNU10}, in many cases the completion $\olB$ over which the extended (Stein-factorized) period map exists is much easier to describe.

% \subsubsection{Comparison with \cite{DR25}}
% {\color{red} Let me know how you want to deal with tihs section, which should be walked back to some extent probably?}

% In \cite{DR25}, Colleen Robles and the first author will give a similar construction. It is known by the Schmid's nilpotent orbit theorem (Theorem \ref{Thm:SchmidNilp}) that for any $b\in \olB$, there exists a unique $\Gamma$-class of nilpotent orbits $(\sigma_b, F_b)$ associated to $b\in \olB$ via the asymptotic behavior of $\Phi$ near $b$. Instead of trying to record the information on both the nilpotent cone $\sigma_b$ and the limiting Hodge flag $F_b$, we only try to have $\overline{\Phi}$ encode the nilpotent orbit $\exp(\sigma_{b, \bC})F_b$. In other words, the information on the submonoid $\sigma_b\subset \sigma_{b, \bC}$ is lost.

% The advantage of the construction in \cite{DR25} is that we do not need a combinatorial monodromy complex which encodes the information about monodromy cones. In particular, this construction can be done over any logarithmic completion $\olB$ of $B$. However, in this case the logarithmic structure of the completed period map would be lost in general.

\subsubsection{Relation with \cite{BFMT25}}

In this paper the authors construct a canonical completion $\overline{\Phi}: \olB\rightarrow \overline{\wp}$ of $\Phi: B\rightarrow \wp$. The construction is called as the generalized (Satake)--Baily--Borel compactification, as the ``hodge-theoretic data" that is recorded at the boundary is only that of pure hodge structures (the graded components) with no extension data. It recovers the classical Satake--Baily--Borel compactification in the case when the period domain $D$ is Hermitian symmetric. As an analog of the relation between Satake--Baily--Borel compactification and toroidal compactification in the classical case, studying the relationship between the generalized completions is a potentially interesting topic.

In \cite{BFMT25} the authors compactify the actual period image, without needing to pass to the Stein factorization. It seems very plausible the same thing is possible with out main theorem.

\subsubsection{Examples for projective completions}

Though the construction of the CMCX $\cT$ and the completion $\hat{B}$ of $B$ given by Theorem \ref{Thm:MainTHMformal} are in general unapproachable without using computer, there are still some accessible cases including the case when $B$ is a surface. In Section \ref{Sec07} we will review several such examples. We will show the main Theorem \ref{Thm:MainTHMformal} gives completed period mappings in the category of projective varieties. These are cases in which the Conjecture \ref{Conj:ProjectiveCompletion} is true.

\subsubsection{Algebraicity of weakly special subvarieties}

%Fix a weak Mumford-Tate domain $M$ with $\mathbf{H}:=\mathrm{Aut}(M)^+$. Let $\Gamma_{H}:=\Gamma\cap H_{\bQ}$. A weakly special subvariety associated with the weak Hodge datum $(\mathbf{H}, M)$ is an irreducible component of $\Phi^{-1}(\Gamma_H\backslash M)$ in $B$. It is an analytic subvariety of $B$.

In Hodge theory, weakly special subvarieties for a $\bZ$-PVHS $\bV\rightarrow B$ are analytic subvarieties of $B$ carrying special Hodge-theoretic or monodromic data. In the lanaguage of o-minimal geometry, since the period map is definable by \cite{BKT20}, weakly special subvarieties are definable analytic subvarieties of $B$ and hence must be algebraic by the definable GAGA theorem \cite{BBT22}. This recovers and generalizes the classical result of \cite{CDK95} that Hodge loci is a countable union of algebraic varieties. 

As an application of our main theorem, we give an alternative proof of the algebraicity result on weakly special subvarieties using our main theorem \ref{Thm:MainTHMformal}. Our proof is based on the Hodge-theoretic interpretation of the completed period map $\olphi$ provided by Theorem \ref{Thm:MainTHMformal}. We will show that weakly special subvarieties of $B$ can be extended to analytic subvarieties of $\olB$ over which an extended period map $\olphi$ exists. The boundary Hodge-theoretic data are described by the period torsors defined by Kato-Usui in \cite[Chap. 3]{KU08}. Indeed, this can be understood as a logarithmic geometry formalism of \cite{CDK95}'s idea.

\subsection{Important remark on the connectivity of strata}

Similar to some related works like \cite{GGLR20} and \cite{GGR21}, we implicitly use the assumption that every strata $Z_I^{*}\subset \hat{B}-B$ is connected. This is in general not the case, but we claim that we may assume it without loss of generality. Indeed, every related argument in this article only uses the partially ordered set of connected boundary strata of $\hat{B}$. In other words, we may re-index each $Z_I^{*}$ by its connected components $Z_{I,1}^{*}, Z_{I,2}^{*},...$ without essentially changing any arguments, but this will make notations in this paper unnecessarily complicated. Therefore, we decide to ignore the component index in this paper.
% ---------------------------------------
\section{Hodge theory background}\label{Sec03}
% ---------------------------------------

In this section some important definitions and results in Hodge theory will be reviewed. We will also introduce the definition of combinatorial monodromy complex which is essential for this paper.

%--------------------------
\subsection{Basic settings}
%--------------------------

Let $B$ be a smooth quasi-projective variety, $\hat{B}$ be projective with $Z:=\hat{B}- B=\bigcup_{i\in \mathcal{I}}Z_i$ being a simple normal crossing divisor with $Z_i$'s to be its smooth irreducible components. Moreover, for any $I\subset \mathcal{I}$, denote $Z_I:=\bigcap_{i\in I}Z_i$ and $Z_I^*:=Z_I-\bigcup_{j\notin I}Z_j$.

Assume there is a $\mathbb{Z}$-local system $\mathbb{V}$ on $B$ which carries an integral polarized variation of Hodge structures ($\bZ$-PVHS). Fix a reference point $b_0\in B$ and denote the fiber of $\bV$ at $b_0$ as $V$, there is a period map associated with the $\bZ$-PVHS:
\begin{equation}\label{periodmapgeneralform}
    \Phi: B\rightarrow \Gamma \backslash D
\end{equation}
 where $D$ is the classifying space of polarized Hodge structures defined on the integral polarized lattice $(V_{\mathbb{Z}}, Q)$ and $\check{D}$ be its compact dual. Moreover, denote $\mathbf{G}=\mathrm{Aut}(H, Q)$ and $\mathfrak{g}:=\mathrm{Lie}(G)$, then $\Gamma\leq G_{\mathbb{Z}}$ is the monodromy group of the local system $\bV\rightarrow B$:
 \[
 \Gamma:=\mathrm{Img}\{\pi_1(B, b_0)\rightarrow \mathrm{Aut}(V_{ \bZ}, Q)\}.
 \]
 In addition, we call $\Gamma\backslash D$ a Hodge variety associated to the Hodge datum $(\mathbf{G}, D)$.

 The differential of the period map $\Phi$ can be interpretated as follows. For any $b\in B$, consider the universal lift $\tilde{\Phi}:\tilde{B}\rightarrow D$ of $\Phi$ and any $\tilde{b}\in \tilde{B}$ lying above $b$, the image $\tilde{\Phi}(\tilde{b})\in D$ may be represented by a Hodge flag $F_b:=\{F_b^k\subset V_{b_0, \bC}\}_{k\in \bZ}$. The infinitesimal variation of Hodge structures (IVHS) $d\Phi_b$ is given by the map:
 \[
 d\Phi_b: T_bB\rightarrow \oplus_{k\in \bZ}\mathrm{Hom}(F_b^k, V_b/F_b^k)
 \]
whose image lies in $\oplus_{k\in \bZ}\mathrm{Hom}(F_b^k, F_b^{k-1}/F_b^k)$ because of the Griffiths transversality.
\subsection{Nilpotent cones and nilpotent orbits}

{Suppose a convex polyhedral cone $\sigma\subset \mathfrak{g}_{\mathbb{Q}}$ is the convex hull of commuting nilpotent elements $\{N_1,...,N_n\}$. In other words,
\begin{equation}
    \sigma:=\{\sum_{j=1}^{n}x_jN_j \ | \ x_1,...,x_n\in \mathbb{Q}_{\geq 0}\}.
\end{equation}}
%\footnote{As defined, in this paper we assume all nilpotent cones are relatively open unless specified.}

\begin{definition}\label{definitionnilporbit}
We say $\sigma$ is a nilpotent cone if there exists some $F\in \check{D}$ such that the following conditions hold:
\begin{enumerate}
\item $NF^{p}\subset F^{p-1}$ for any $N\in \sigma$ and $0\leq p\leq l$;
\item $\mathrm{exp}(\sum_{j=1}^{n}z_jN_j)F\in D$ for $z_j\in \mathbb{C}$ and $\mathfrak{Im}(z_j)\gg0$ for any $j$.
\end{enumerate}
In this case we also call $\cZ:=\exp(\sigma_{\mathbb{C}})F\subset \check{D}$ a nilpotent orbit , denoted as $\cZ:=(\sigma, F)$. Denote $\tilde{A}(\sigma)\subset \check{D}$ as the set of points $F\in \check{D}$ such that $(\sigma, F)$ is a nilpotent orbit, and $A(\sigma):=\exp(\sigma_\bC)\backslash \tilde{A}(\sigma)$. 
\end{definition}
It is a classical result in Hodge theory that the notions of nilpotent orbits and limiting mixed Hodge structures are closely connected: 
\begin{theorem}\cite{CK82}\label{Thm:nilporbittolmhs}
     $(\sigma, F)$ gives a nilpotent orbit if and only if $(W(\sigma), F)$ is a mixed Hodge structure polarized by any $N\in \sigma^{\circ}$, where $W(\sigma)$ is the Jacobson-Morozov weight filtration associated to any $N\in\sigma^{\circ}$, appropriately shifted.
 \end{theorem}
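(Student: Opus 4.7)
The plan is to establish both directions of the equivalence, reducing multi-variable statements to single-variable ones wherever possible, and appealing to Schmid's nilpotent orbit theorem and classical $\mathfrak{sl}_2$-orbit theory.

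For the direction PMHS $\Rightarrow$ nilpotent orbit, I would start by using the Deligne bigrading of the mixed Hodge structure $(W(\sigma), F)$ to obtain a canonical splitting compatible with every $N\in\sigma$. Picking $N$ in the interior of $\sigma$, one constructs an $\mathfrak{sl}_2$-triple $(N, H, N^+)$ acting compatibly with both filtrations; the grading by the $H$-eigenvalues refines the weight filtration. Using this splitting I would analyze the flag $\exp(\sum z_j N_j) F$ for $\mathrm{Im}(z_j)\gg 0$ and check the three defining conditions for a point of $D$: Griffiths transversality is immediate from $NF^p\subset F^{p-1}$; the first Hodge--Riemann bilinear relation persists because $F\in \check D$ and the $N_j$ lie in $\mathfrak{g}_{\bQ}$; and the positivity (second Hodge--Riemann) relation is verified by a dominant term estimate on the Hermitian form $Q(\cdot, C\,\overline{\cdot})$, where the leading contribution from $\exp(iyH)$ is positive by the polarization hypothesis and the subleading terms are controlled by the nilpotency of $N-H$ corrections.

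For the converse nilpotent orbit $\Rightarrow$ PMHS, the plan is first to treat the one-dimensional case $\sigma = \bQ_{>0}\cdot N$ directly via Schmid's original argument: the existence of $\lim_{y\to\infty}\exp(iyN)F\in\check D$ together with $\exp(iyN)F\in D$ for $y\gg 0$ forces $(W(N), F)$ to be a polarized MHS by the classical $\mathfrak{sl}_2$-orbit theorem. In the multi-variable case, the essential additional input is the Cattani--Kaplan lemma that the Jacobson--Morozov weight filtration $W(N)$ is \emph{constant} as $N$ varies over the interior $\sigma^\circ$, so that $W(\sigma)$ is well-defined. Granting this constancy, the polarization of $(W(\sigma), F)$ by each $N\in\sigma$ follows: on $\sigma^\circ$ it is the single-variable statement applied ray-by-ray, and on the boundary faces it follows by taking limits of the Hodge--Riemann inequalities and using lower semicontinuity together with the fact that the relative weight filtrations along a face agree with the restriction of $W(\sigma)$.

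The main obstacle will be the constancy of $W(N)$ on $\sigma^\circ$. For two commuting nilpotents $N_1, N_2$ with a priori different weight filtrations, $W(N_1 + tN_2)$ could in principle jump with $t$, and ruling this out is not formal. I expect to prove it by induction on $\dim\sigma$, invoking the multi-variable $\mathfrak{sl}_2$-orbit theorem relative to a codimension-one face: one approximates the nilpotent orbit by a split one associated to a face, reducing the question to a relative weight filtration calculation that can be iterated. A secondary but unavoidable bookkeeping point is the weight normalization flagged in the footnote: the bare Jacobson--Morozov filtration must be shifted to sit at the weight of the ambient pure Hodge structure before it matches the MHS appearing in the statement, and tracking this shift consistently through the $\mathfrak{sl}_2$-approximations requires care.
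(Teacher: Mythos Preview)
The paper does not supply its own proof of this theorem: it is quoted as a classical result of Cattani--Kaplan \cite{CK82} and used only as background for the nilpotent-orbit/LMHS dictionary. Your outline is a reasonable sketch of the actual Cattani--Kaplan argument---in particular you have correctly isolated the constancy of $W(N)$ over the open cone $\sigma$ as the decisive nontrivial step---but there is nothing in the paper to compare it against.
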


 We thus see that $\tilde{A}(\sigma)$ is naturally a mixed period domain, and the space of nilpotent orbits $A(\sigma)$ is a natural quotient of it. Indeed, $A(\sigma)$ is a \textit{mixed hodge domain} as in \cite[Thm 6.4]{kl17}.

 More precisely, we have the following correspondence of nilpotent orbits and limiting mixed Hodge structures (LMHS):
 \begin{equation}
     \{(\sigma, F)\} \leftrightarrow (W(\sigma), \exp(\sigma_\bC)F\subset \check{D}).
 \end{equation}
Any element $g\in \mathbf{G}$ acts on the set of nilpotent orbits by:\[
 g(\sigma, F)\rightarrow (\mathrm{Ad}_g\sigma, gF)\in A(\mathrm{Ad}_g\sigma).
 \]
\subsubsection{Schmid's nilpotent orbit theorem}

For any $b\in \hat{B}$, choose a local neighborhood around $b\in U\subset \hat{B}$ satisfying $U\cap B\cong (\Delta^*)^k\times \Delta^{n-k}$ on which we may consider the local monodromy operators $\{T_i\}_{1\leq i\leq k}$ around each boundary divisor and their logarithms $\{N_i:=\log(T_i)\}_{1\leq i\leq k}$. By picking a local lift to $D$, we may consider the $N_i$ as elements of $\mathfrak{g}$. One of the main ingredients is the following interpretation of Schmid's nilpotent orbit theorem \cite{Sch73}:
\begin{theorem}\label{Thm:SchmidNilp}
 Pick a local lift to $D$ as above. Then The cone $\mathrm{Conv}(\{N_i\}_{1\leq i\leq k})\subset \mathfrak{g}_\bQ$ is a nilpotent cone, and there is a nilpotent orbit $(\sigma_b, F_b)$ canonically associated to $b$ and our local lift for some $F_b\in \check{D}$. Changing the local lift changes the nilpotent orbit by the action of the corresponding element of $\Gamma$. 
 \end{theorem}
When looking at $\hat{B}$ globally, we have the following corollary:
\begin{corollary}\label{cor:schmidNOT}
    For any $b\in \hat{B}$ there exists a canonically associated nilpotent orbit $(\sigma_b, F_b)$ well-defined up to the action of $\Gamma$.
\end{corollary}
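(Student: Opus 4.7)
The plan is to deduce the corollary by applying Theorem \ref{Thm:SchmidNilp} pointwise and then tracking how the local data $(\sigma_b, F_b)$ depends on the auxiliary choices made in its construction. First I would observe that the theorem as stated already provides, for each choice of coordinate neighborhood $U \ni b$ with $U \cap B \cong (\Delta^*)^k \times \Delta^{n-k}$ and each choice of a lift $\tilde b \in \tilde B$ of a nearby base point, a well-defined nilpotent orbit $(\sigma_b, F_b)$. So the only content of the corollary is that the ambiguity in these choices is absorbed by the action of $\Gamma$.

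Next I would run through the possible sources of ambiguity. Shrinking $U$ does not change the local monodromy operators $T_i$ or their logarithms $N_i$, hence does not change $\sigma_b$ or $F_b$. Re-labelling the branches of $Z$ meeting $b$ merely permutes the generators $\{N_i\}$, which leaves the cone $\sigma_b$ untouched. The serious choice is the local trivialization of the local system: two choices differ by an element $\gamma$ of the image in $\Gamma$ of $\pi_1(U \cap B)$, and a direct computation from Schmid's theorem shows that replacing the trivialization by $\gamma$ sends $(\sigma_b, F_b)$ to $(\mathrm{Ad}_\gamma \sigma_b, \gamma F_b)$; this is precisely the $\bold{G}$-action on nilpotent orbits recalled after Definition \ref{definitionnilporbit}, restricted to $\Gamma$. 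Finally, the choice of path from a fixed base point $b_0 \in B$ to a point in $U \cap B$ enters only through its image in $\pi_1(B,b_0) \to \Gamma$, so two different global paths differ by an element of $\Gamma$ and again change $(\sigma_b, F_b)$ by a $\Gamma$-action.

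Combining these observations, the $\Gamma$-orbit of $(\sigma_b, F_b)$ in the set of nilpotent orbits is independent of all choices, which is exactly the statement. For $b \in B$ the cone $\sigma_b$ is trivial and the $\Gamma$-orbit of $F_b$ recovers $\Phi(b) \in \Gamma\backslash D$, so the construction is compatible with the original period map; for $b \in Z_I^*$ the nontrivial generators of $\sigma_b$ correspond exactly to $i \in I$, since only these give rise to loops with possibly nontrivial monodromy.

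The only part that requires any care is the verification of how the local trivialization affects the pair $(\sigma_b, F_b)$, and here one must use the fact that Schmid's construction of $F_b$ from a chosen nilpotent orbit approximation is equivariant with respect to local monodromy. Once this equivariance is in hand, the patching is formal and the corollary follows immediately from Theorem \ref{Thm:SchmidNilp}.
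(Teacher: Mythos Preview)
Your proposal is correct. The paper actually provides no proof for this corollary at all, treating it as an immediate consequence of Theorem~\ref{Thm:SchmidNilp}; your argument spells out exactly the standard well-definedness check (dependence on the coordinate chart, the local trivialization, and the path to the global basepoint, with all ambiguities absorbed by $\Gamma$) that the paper leaves implicit.
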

\noindent There are two ways to intepretate nilpotent orbit $(\sigma_b, F_b)$:
\begin{enumerate}
    \item Suppose $\sigma_b=\langle N_1,...,N_r\rangle$, for local coordinates $\{t_i\}$ around $b$, the map $\psi(t_i)=\exp(\sum -\frac{\log(t_i)}{2\pi i}N_i)F_b$ approximates the local lift of $\varphi$ at $b$ (\cite[Sec. 4]{Sch73}).
    \item By Theorem \ref{Thm:nilporbittolmhs}, for every $b\in \hat{B}$ and every $N\in \sigma_b$ there is a monodromy weight filtration $W_b:=W(N)$ associated to $b$, such that $(W_b, F_b, N)$ is an LMHS.
\end{enumerate}
We give a better interpretation of (1) above. Suppose $U\subset \hat{B}$ is an open subset such that $U\cap S\cong (\Delta^{*})^k\times\Delta^l$. Let $\mathfrak{H}$ be the upper-half plane $\{\mathfrak{Im}(z)>0\}$. The local period map $\Phi$ on $U\cap S$ has the local lift:
\begin{equation}
\begin{tikzcd}
\mathfrak{H}^k\times \Delta^l\arrow[d, ""] \arrow[r, "\tilde{\Phi}"] & D \arrow[d] \\
(\Delta^{*})^k\times\Delta^l \arrow[r, "\Phi"] & \Gamma \backslash D
\end{tikzcd}
\end{equation}
If we denote $z=(z_k), w=(w_l)$ as coordinates on $\mathfrak{H}^k$, $\Delta^l$,
\begin{equation}\label{eqn:localliftperiodmap}
    \tilde{\Phi}=\exp(\sum_{1\leq j\leq k}z_jN_j)\psi(e^{2\pi iz}, w),
\end{equation}
where $\psi(z,w)\in D$ is holomorphic over $\Delta^{k+l}$. The second part of Schmid's nilpotent orbit theorem says:
\begin{theorem}\label{Thm:SchmidNilp2}
    For any $w\in\Delta^l$, $(\sigma:=\langle N_1,...,N_k\rangle, \psi(0, w))$ is a nilpotent orbit, and under the canonical metric $d$ on $D$, as $\mathfrak{Im}(z)\rightarrow \infty$,
    \begin{equation}
        d(\exp(\sum_{1\leq j\leq k}z_jN_j)\psi(0, w), \tilde{\Phi}(e^{2\pi iz},w)) \sim e^{-b\mathfrak{Im}(z)}
    \end{equation}
    for some $b>0$.
\end{theorem}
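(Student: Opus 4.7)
The plan is to derive both assertions from the local lift formula (\ref{eqn:localliftperiodmap}) together with horizontality of $\tilde\Phi$ and the behavior of the canonical invariant metric on $D$ under exponentials of commuting nilpotent elements.

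First I would establish that $(\sigma,\psi(0,w))$ is a nilpotent orbit. Equation (\ref{eqn:localliftperiodmap}) gives $\tilde\Phi(e^{2\pi iz},w)=\exp(\sum_j z_jN_j)\,\psi(e^{2\pi iz},w)\in D$ for every $z\in\mathfrak{H}^k$. Since $\psi$ extends holomorphically across $\{t=0\}$, $\psi(e^{2\pi iz},w)\to\psi(0,w)$ in $\check D$ as $\mathfrak{Im}(z_j)\to\infty$. Combined with the openness of $D\subset\check D$ and the fact (to be obtained in part two of the theorem) that the distance between $\tilde\Phi(e^{2\pi iz},w)$ and $\exp(\sum_j z_jN_j)\psi(0,w)$ tends to zero, one concludes $\exp(\sum_j z_jN_j)\psi(0,w)\in D$ for $\mathfrak{Im}(z_j)\gg 0$, which is condition (2) of Definition \ref{definitionnilporbit}. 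Condition (1), namely $N_jF^p\subset F^{p-1}$ with $F=\psi(0,w)$, follows from Griffiths transversality in the limit.

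For the distance estimate, Taylor-expand $\psi(t,w)-\psi(0,w)=\sum_j t_j\eta_j(w)+O(|t|^2)$ with holomorphic coefficients $\eta_j(w)$; substituting $t_j=e^{2\pi iz_j}$ yields $\|\psi(e^{2\pi iz},w)-\psi(0,w)\|\le C e^{-2\pi\min_j\mathfrak{Im}(z_j)}$ in any fixed local chart on $\check D$ near $\psi(0,w)$. The distance in $D$ between $\tilde\Phi(e^{2\pi iz},w)$ and $\exp(\sum_j z_jN_j)\psi(0,w)$ is the image under $\exp(\sum_j z_jN_j)$ of this exponentially small displacement, measured in the canonical Hodge metric at the displaced point.

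The main obstacle is controlling the metric distortion produced by $\exp(\sum_j z_jN_j)$ for large $\mathfrak{Im}(z)$: this element does not lie in $G_{\bR}$ and therefore does not act isometrically. Nilpotency of the $N_j$ is decisive here: $\mathrm{Ad}(\exp(\sum_j z_jN_j))$ is a polynomial of bounded degree in $z$ on $\mathfrak{g}$, so the pullback of the Hodge norm along the orbit grows at most polynomially in $|z|$. Promoting this Lie-algebra estimate to a genuine comparison of Hodge norms requires the $\mathrm{SL}_2$-orbit theorem, which attaches an $\mathfrak{sl}_2$-triple to the LMHS $(W(\sigma),\psi(0,w))$ and diagonalizes the comparison with respect to the weight filtration. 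Once a bound of shape $P(|z|)\cdot e^{-2\pi\min_j\mathfrak{Im}(z_j)}$ is established, the polynomial factor is absorbed by replacing $2\pi$ with any $0<b<2\pi$, delivering the asserted exponential decay.
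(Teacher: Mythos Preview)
The paper does not supply a proof of this statement; it is recorded as background and attributed to Schmid's original work. Your sketch, however, contains a genuine circularity. To obtain the distance estimate you invoke the $\mathrm{SL}_2$-orbit theorem applied to the LMHS $(W(\sigma),\psi(0,w))$, but the existence of that polarized mixed Hodge structure is, via Theorem~\ref{Thm:nilporbittolmhs}, equivalent to $(\sigma,\psi(0,w))$ being a nilpotent orbit---precisely the first assertion you are proving, which you in turn deduce \emph{from} the distance estimate. In Schmid's actual argument the logical order is the reverse: the nilpotent orbit theorem is established \emph{before} the $\mathrm{SL}_2$-orbit theorem, using the negative holomorphic sectional curvature of $D$ in horizontal directions together with Ahlfors--Schwarz type distance-decreasing estimates to control the Hodge metric along $\tilde\Phi$ directly, without any appeal to the limiting mixed Hodge structure. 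There is also a smaller issue with the step ``$\exp(\sum_j z_jN_j)\psi(0,w)\in D$ because $D$ is open and the distance to a point of $D$ is small'': the distance you are invoking is the invariant metric on $D$, which is only defined once you already know both points lie in $D$.

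A secondary gap is the claim that $N_j\,\psi(0,w)^p\subset\psi(0,w)^{p-1}$ ``follows from Griffiths transversality in the limit.'' Griffiths transversality constrains the derivative of $\tilde\Phi$, not of $\psi$; one must actually show that after untwisting by $\exp(\sum_j z_jN_j)$ the horizontality condition forces the stated relation between $N_j$ and the limiting filtration. This is part of the content of Schmid's proof rather than a formal consequence.
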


%----------------------------------------------
\subsection{Kato--Nakayama--Usui's construction}\label{Sec:KNUtheory}
%----------------------------------------------

In this section, we review Kato--Nakayama--Usui's theory on the space of nilpotent orbits.

Let $\Sigma$ be a collection of rational nilpotent cones in $\mathfrak{g}_{\mathbb{Q}}$, and $|\Sigma|$ be its support in $\mathfrak{g}_{\mathbb{Q}}$. For $\sigma\in \Sigma$, set $\Gamma(\sigma):=\Gamma\cap \mathrm{exp}({\mathbb{Q}_{\geq 0}}\sigma)$. This is a semigroup. Denote $\Gamma_{\sigma}:=\Gamma(\sigma)^{\mathrm{gp}}$ as its associated group.

\begin{definition}\label{defstronglycompatible}
We say $\Sigma$ is strongly compatible with $\Gamma\leq G_{\mathbb{Z}}$ if the following holds:
\begin{enumerate}
\item $\Sigma$ is closed under the adjoint action by $\Gamma$;
\item $\overline{\sigma_{\mathbb{Q}}}= \mathbb{Q}_{\geq 0}\langle\mathrm{log}(\Gamma(\sigma))\rangle$ for any $\sigma\in \Sigma$.
\end{enumerate}
\end{definition}

\begin{definition}[\cite{KU08}, \cite{KNU10}]\label{deffanweakfan}
We say $\Sigma$ is a fan (resp. weak fan) if the following (1) and (2) (resp. (1) and (2')) hold:
\begin{enumerate}
\item $\Sigma$ is closed under taking faces;
\item For any $\sigma, \tau\in \Sigma$, either $\sigma^{\circ}\cap \tau^{\circ}=\emptyset$ or $\sigma=\tau$;
\item[(2')] For any $\sigma, \tau\in \Sigma$, suppose $\sigma^{\circ}\cap \tau^{\circ}$ is not empty, and there exists $F\in \check{D}$ such that both $(\sigma, F)$ and $(\tau, F)$ are nilpotent orbits, then $\sigma=\tau$.
\end{enumerate}
\end{definition}
Now suppose $\Sigma$ is a $\Gamma$-strongly compatible weak fan. The space of nilpotent orbits is defined as follows:
\begin{align}
    D_{\Sigma}:&=\{(\sigma, Z) \ | \ \sigma\in \Sigma, Z \ \text{is a} \ \sigma-\text{nilpotent orbit}  \}\\
   \nonumber &=\bigsqcup_{\sigma\in \Sigma}A(\sigma)
\end{align}

The first main result of Kato--Nakayama--Usui's theory is about the (partial) completion of the Hodge variety $\Gamma\backslash D$:
\begin{theorem}\cite[Theorem A]{KU08, KNU10}\label{Thm:kumainthmA}
Suppose $\Sigma$ is a fan or weak fan strongly compatible with $\Gamma\leq G_{\mathbb{Z}}$ which is assumed to be neat, then: 
 \begin{enumerate}
 \item $\Gamma \backslash D_{\Sigma}$ admits a structure of logarithmic manifold as well as Hausdorff topological space.
 \item For any $\sigma\in \Sigma$, let  $F(\sigma)$ denote the fan consisting of all the faces of $\sigma$, and $D_\sigma:=D_{F(\sigma)}$. Then the canonical map $\Gamma_{\sigma}\backslash D_{\sigma}\rightarrow \Gamma\backslash D_{\Sigma}$ is a local homeomorphism.
 \end{enumerate}
\end{theorem}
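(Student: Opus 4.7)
The plan is to construct $\Gamma\backslash D_\Sigma$ through ``toric-style'' local charts indexed by cones $\sigma \in \Sigma$, patched together via the face relations of $\Sigma$ and the $\Gamma$-action; Hausdorffness and the local homeomorphism in (2) would then reduce, via the classical asymptotic Hodge theorems of Schmid and Cattani--Kaplan--Schmid, to the weak fan axiom together with neatness of $\Gamma$.

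First I would build local charts. For each $\sigma \in \Sigma$ with generators $N_1, \dots, N_k$ obtained from $\log \Gamma(\sigma)$, set $M_\sigma := \Gamma(\sigma)^{\mathrm{gp}}$ and form the partial toric compactification
\begin{equation*}
    \mathrm{toric}_\sigma := \mathrm{Spec}_{\mathrm{an}}\,\bC[\sigma^\vee \cap M_\sigma^\vee].
\end{equation*}
Consider the map $\sigma_\bC \times \check D \to \mathrm{toric}_\sigma \times \check D$, $(z, F) \mapsto (e^{2\pi i z}, \exp(z) F)$, invariant under $M_\sigma$-translation in the first factor. Let $E_\sigma \subset \mathrm{toric}_\sigma \times \check D$ be the subset of points $(q, F')$ for which the preimage yields a nilpotent orbit in the sense of Definition \ref{definitionnilporbit}. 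Using Schmid's nilpotent orbit theorem and the polarizing properties of the limiting MHS, I would check that $E_\sigma$ is a logarithmic analytic space whose toric boundary parametrizes $\Gamma(\sigma)^{\mathrm{gp}} \backslash D_\sigma$. For face inclusions $\tau \prec \sigma$, the standard toric open immersion extends to an open immersion $E_\tau \hookrightarrow E_\sigma$ of log analytic spaces, providing the patching data within the fan $\Sigma$, and the $\Gamma$-action on $\Sigma$ supplies the remaining identifications. Neatness of $\Gamma$ ensures that on a small neighborhood of any point the quotient is by a free action, so $\Gamma\backslash D_\Sigma$ inherits a canonical logarithmic manifold structure.

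The principal obstacle is Hausdorffness. Given two distinct points $p, p' \in \Gamma\backslash D_\Sigma$ that cannot be separated by open neighborhoods, one would lift to sequences in $D$ whose images converge to both $p$ (coming from a cone $\sigma$) and $p'$ (coming from $\tau$) after applying suitable translates $\gamma_n \in \Gamma$. The multi-variable $\mathrm{SL}_2$-orbit theorem of Cattani--Kaplan--Schmid would then allow me to extract from such a sequence a common splitting producing a point $F \in \check D$ for which both $(\sigma, F)$ and $(\tau, F)$ are nilpotent orbits. The weak fan condition (2') forces $\sigma = \tau$, and a residual asymptotic argument identifies the two orbits up to $\Gamma_\sigma$, contradicting $p \neq p'$. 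This asymptotic analysis --- carefully balancing the scaling rates along the various generators and tracking the $\gamma_n$-translates --- is the heart of the argument and is exactly where the weak fan axiom is used. Part (2) then follows from the chart construction combined with Hausdorffness: the map $\Gamma_\sigma \backslash D_\sigma \to \Gamma \backslash D_\Sigma$ is open by construction, and injectivity on a neighborhood of a fixed nilpotent orbit reduces, via neatness, to the statement that a $\gamma \in \Gamma$ near the identity preserving $\sigma$ must already lie in $\Gamma_\sigma$.
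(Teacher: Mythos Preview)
The paper does not supply its own proof of this statement: Theorem~\ref{Thm:kumainthmA} is quoted verbatim from \cite[Theorem A]{KU08, KNU10} as a background result, with no argument given. There is therefore nothing in the paper to compare your proposal against.

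That said, your sketch is a faithful outline of the actual Kato--Usui argument, and the paper's own Section~\ref{Sec08} (the period torsor construction) confirms this: the spaces $\check{E}_\sigma = \mathrm{toric}_\sigma \times \check{D}$, $\tilde{E}_\sigma$, $E_\sigma$ and the torsor map $\Theta_\sigma: E_\sigma \to \Gamma_\sigma\backslash D_\sigma$ reviewed there are precisely the local charts you describe, and \cite[Lemma 7.3.3]{KU08} and \cite[Prop.~7.4.3]{KU08} (both cited in the paper) are the ingredients behind the local homeomorphism in part~(2). One small imprecision: local injectivity of $\Gamma_\sigma\backslash D_\sigma \to \Gamma\backslash D_\Sigma$ is not quite ``a $\gamma$ near the identity preserving $\sigma$ lies in $\Gamma_\sigma$''; the actual mechanism (via \cite[Prop.~7.4.3]{KU08}) is that if $\gamma\in\Gamma$ identifies two nilpotent orbits $(\sigma,F)$ and $(\sigma',F')$ with $\sigma,\sigma'\in\Sigma$, then $\mathrm{Ad}_\gamma\sigma$ and $\sigma'$ share an interior point and a common polarizing $F'$, so the weak fan axiom forces $\mathrm{Ad}_\gamma\sigma=\sigma'$, and then neatness gives $\gamma\in\Gamma_\sigma$ when $\sigma=\sigma'$. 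Your Hausdorffness paragraph correctly identifies the multi-variable $\mathrm{SL}_2$-orbit theorem as the analytic core.
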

We refer readers to \cite[Chap. 3]{KU08} for the literature of logarithmic manifolds. For our purposes, it suffices to note that these are locally ringed spaces equipped with an fs log-structure. Some important properties will be reviewed in Section \ref{Sec08}. When $D$ is Hermitian, one may choose the (weak) fan $\Sigma$ to be the rational polyhedral decompositions constructed by classical theories (e.g., \cite{AMRT10}, \cite{Nam76}), then the Kato--Nakayama--Usui's construction coincide with the toroidal compactifications $\overline{\Gamma\backslash D}^\Sigma$. In general, the space $\Gamma\backslash D_{\Sigma}$ does not admit a structure of complex varieties. 

The second main result of Kato--Nakayama--Usui's theory is about the moduli properties of the space $\Gamma\backslash D_\Sigma$. 

\begin{definition}\cite[Sec. 2.5]{KU08}\label{def:compatibilityweakfan}
    Given $\Gamma\leq G_\bZ$ and $\Sigma$ which is a (weak) fan strongly compatible with $\Gamma$. We say a $\bZ$-PVHS $\bV\rightarrow B$ and a partial completion $B'$\footnote{$B'$ need not to be in the category of complex spaces. It can be chosen in a more general category $\mathcal{B}(\mathrm{log})$, see \cite[Chap. 2]{KU08}.} is compatible with $(\Gamma, \Sigma)$ if:
    \begin{enumerate}
        \item The monodromy group of $\bV\rightarrow B$ is contained in $\Gamma$;
        \item For any $b\in B'$ and any $\Gamma$-representative of nilpotent orbits $(\sigma_b, F_b)$, there exists a unique minimal $\tau_b\in \Sigma$ such that $\sigma_b\subset \tau_b$ and $(\tau_b, F_b)$ is a nilpotent orbit.
    \end{enumerate}
    In this case, we say the $\bZ$-PVHS $\bV\rightarrow B$ may be extended to a $\bZ$-logarithmic PVHS ($\bZ$-LPVHS) on $B'$ compatible with $(\Gamma, \Sigma)$.
\end{definition}
\begin{theorem}\cite[Theorem B]{KU08, KNU10}\label{Thm:kumainthmB}
The space $\Gamma\backslash D_\Sigma$ represents the moduli functor of $\bZ$-LPVHS compatible with $(\Gamma, \Sigma)$ in the sense that the set of $\bZ$-LPVHS compatible with $(\Gamma, \Sigma)$ has a one-to-one correspondence with the set $\mathrm{Mor}(-, \Gamma\backslash D_\Sigma)$ in the category of logarithmic manifolds.
\end{theorem}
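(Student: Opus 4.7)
The plan is to set up mutually inverse assignments between $\bZ$-LPVHS on $X$ compatible with $(\Gamma,\Sigma)$ and morphisms $X\to\Gamma\backslash D_\Sigma$ in the category of logarithmic manifolds, and then check functoriality in $X$. The asymptotic Hodge theory encoded in Schmid's nilpotent orbit theorem (Theorems \ref{Thm:SchmidNilp} and \ref{Thm:SchmidNilp2}) supplies the dictionary in both directions, while part (1) of Theorem \ref{Thm:kumainthmA} provides the log manifold structure on the target together with the chart descriptions that will be used throughout.

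For the forward direction, given a $\bZ$-LPVHS on $X$ compatible with $(\Gamma,\Sigma)$, I would use the fact that over the open dense locus where it is a usual $\bZ$-PVHS one already has the classical period map into $\Gamma\backslash D\subset\Gamma\backslash D_\Sigma$. At each boundary point $x$ I would choose local coordinates compatible with the logarithmic structure, read off the local monodromy logarithms $N_1,\dots,N_r$, and invoke Theorem \ref{Thm:SchmidNilp} to produce a nilpotent orbit $(\sigma_x,F_x)$ with $\sigma_x=\langle N_1,\dots,N_r\rangle$. Condition (2) of Definition \ref{def:compatibilityweakfan} then singles out the unique minimal $\tau_x\in\Sigma$ with $\sigma_x\subset\tau_x$ and $(\tau_x,F_x)$ still a nilpotent orbit, yielding a set-theoretic map sending $x$ to the $\Gamma$-class of $(\tau_x,\exp(\tau_{x,\bC})F_x)\in D_\Sigma$. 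The weak fan condition (2') of Definition \ref{deffanweakfan} ensures this choice is unambiguous and that the resulting map lands in a Hausdorff quotient.

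For the reverse direction, I would construct a tautological $\bZ$-LPVHS on $D_\Sigma$ itself: over the open piece $D$ it is the familiar universal PVHS, while along each stratum $A(\sigma)$ the cone $\sigma$ supplies the local monodromy data and $\bar A(\sigma)$ supplies the limiting Hodge filtration. After checking that these local models glue across faces of cones and descend under the $\Gamma$-action, pulling back along any log morphism $f\colon X\to\Gamma\backslash D_\Sigma$ produces a $\bZ$-LPVHS on $X$, and both conditions of Definition \ref{def:compatibilityweakfan} can be read off from the local chart description of $\Gamma\backslash D_\Sigma$.

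The hard step is upgrading the forward assignment from a set-theoretic map to a genuine morphism in the logarithmic category, rather than merely a continuous or holomorphic map off a proper subset. Near each boundary point the local model of $\Gamma\backslash D_\Sigma$ mixes the toric data attached to $\tau$, a slice transversal to $\exp(\tau_\bC)$, and the exponential chart into $D$; one must show that the limiting filtration $F_x$, the Deligne extension of the local system, and the monodromy logarithms all vary jointly holomorphically in these logarithmic coordinates. This is where the several-variable $\mathrm{SL}_2$-orbit theorem of Cattani--Kaplan--Schmid is essential, since its quantitative estimates control the error term in Theorem \ref{Thm:SchmidNilp2} uniformly as one approaches arbitrary deeper boundary strata, and the strong compatibility hypothesis of Definition \ref{defstronglycompatible} is precisely what matches the integral structure of $\Gamma$ to the rational cones of $\Sigma$ in the relevant exponential charts. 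Once this analytic step is in hand, checking that the two assignments are mutually inverse and functorial in $X$ should be a fairly formal unwinding of the constructions.
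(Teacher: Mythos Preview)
The paper does not prove this statement. Theorem~\ref{Thm:kumainthmB} is stated with the citation \cite[Theorem B]{KU08, KNU10} and used as a black box; the surrounding Section~\ref{Sec:KNUtheory} is purely a review of Kato--Nakayama--Usui's theory, and the paper moves directly from the statement to its consequence, Corollary~\ref{cor:extendingperiodmapKNU}, without supplying any argument. So there is nothing here to compare your proposal against.

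For what it is worth, your outline is broadly in the spirit of the actual proof in \cite{KU08}, but the emphasis is somewhat off. In Kato--Usui's argument the key technical device is not a direct appeal to the Cattani--Kaplan--Schmid estimates to show holomorphicity of the set-theoretic period map; rather, it is the period torsor $E_\sigma\to\Gamma_\sigma\backslash D_\sigma$ (reviewed later in this paper in Section~\ref{Sec08}) together with the logarithmic-manifold chart description of $\Gamma\backslash D_\Sigma$. The local period map is first lifted to $E_\sigma$ via the Deligne extension and the untwisted map $\psi$ of \eqref{eqn:localliftperiodmap}, which is manifestly holomorphic into the fs log analytic space $\check E_\sigma$; one then checks it lands in the log submanifold $E_\sigma$ and descends. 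The CKS theory enters earlier, in establishing the structural properties of $E_\sigma$ and $D_\Sigma$ (openness, Hausdorffness, the torsor property), not as a final analytic estimate on the period map itself. Your sketch would benefit from routing the ``hard step'' through $E_\sigma$ rather than trying to verify log-holomorphicity by direct estimate.
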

A direct consequence is:
\begin{corollary}\label{cor:extendingperiodmapKNU}
    If the $\bZ$-PVHS $\bV\rightarrow B$ can be extended to a $\bZ$-LPVHS on $B'$ compatible with $(\Gamma, \Sigma)$, then there exists a unique morphism of logarithmic manifolds
    \[
    \overline{\Phi}: B'\rightarrow \Gamma\backslash D_\Sigma
    \]
    extending $\Phi: B\rightarrow \Gamma\backslash D$ to $B'$. For any $b\in B'$, $\overline{\Phi}(b)\in \Gamma\backslash D_{\Sigma}$ is given by the nilpotent orbit $(\tau_b,F_b)$ for some $\tau_b\in \Sigma$ determined by Definition \ref{def:compatibilityweakfan}.
\end{corollary}
The morphism given by the Corollary \ref{cor:extendingperiodmapKNU} only exists in the category of logarithmic manifolds, but under mild assumptions, the image of $\overline{\Phi}$ is a classical object:
\begin{theorem}\label{Thm:analyticimagepropermap}
    Let $U\subset B'$ be an analytic (resp. algebraic) subspace. If $\overline{\Phi}|_U$ is proper, then $U\xrightarrow{\overline{\Phi}}\overline{\Phi}(U)$ is a morphism in the category of complex analytic (resp. algebraic) spaces. 
\end{theorem}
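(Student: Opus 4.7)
The plan is to promote $\overline{\Phi}(B')$ from a subset of the logarithmic manifold $\Gamma\backslash D_\Sigma$ to a complex analytic space by combining the local analytic embeddings in Kato--Nakayama--Usui's construction with Remmert's proper mapping theorem, and then to promote the analytic structure further to an algebraic space structure using the algebraicity of the Stein factorization $\wp'$ from \cite{BBT22} together with Artin's theorem identifying proper Moishezon spaces with analytifications of proper algebraic spaces.

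First I would recall the local structure of $\Gamma\backslash D_\Sigma$ from \cite[Chap. 3]{KU08}: for each $\sigma\in\Sigma$, every point of $D_\sigma\subset D_\Sigma$ admits a neighborhood that embeds as a locally closed analytic subspace of a classical complex analytic manifold $M_\sigma$ (built from the toric variety of the dual of $\sigma$ and an open subset of $\check{D}$), and the log-manifold structure is induced from this embedding. After passing to the quotient by the neat $\Gamma_\sigma$, this gives local charts $U_\sigma\subset \Gamma\backslash D_\Sigma$ realized as locally closed analytic subspaces of complex manifolds $M_\sigma$. Then, for any $y\in\overline{\Phi}(B')$, choose such a chart $U_\sigma\hookrightarrow M_\sigma$ around $y$ and consider the composition
\[
\overline{\Phi}^{-1}(U_\sigma)\xrightarrow{\overline{\Phi}} U_\sigma\hookrightarrow M_\sigma.
\]
Since $\overline{\Phi}$ is proper and the source is a complex analytic space (an open subset of $B'$), Remmert's proper mapping theorem yields a closed analytic subspace of $M_\sigma$ whose intersection with $U_\sigma$ equals $\overline{\Phi}(B')\cap U_\sigma$ set-theoretically. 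This endows $\overline{\Phi}(B')\cap U_\sigma$ with a complex analytic structure.

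Next I would verify that these local analytic structures glue to a global complex analytic structure on $\overline{\Phi}(B')$. This reduces to showing that the transition maps between different KNU charts restrict to analytic isomorphisms between the corresponding images, which follows from the explicit nilpotent-orbit / toroidal description of the chart maps in \cite{KU08} together with the $\Gamma_\sigma$-equivariance of the construction. Once glued, $\overline{\Phi}: B'\to \overline{\Phi}(B')$ becomes a proper holomorphic map between complex analytic spaces. To pass to the algebraic category, observe that by \cite{BBT22} the Stein factorization $\wp'$ is a quasi-projective algebraic variety sitting inside $\overline{\Phi}(B')$ as a Zariski-dense open subspace (since $\overline{\Phi}$ extends $\Phi'$ and is proper). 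Taking $B'$ compact, as permitted by Theorem \ref{Thm:MainTHMformal}, $\overline{\Phi}(B')$ is a compact complex analytic space containing an algebraic Zariski-dense open of the same dimension; pulling back rational functions from $\wp'$ shows it is Moishezon. Artin's theorem then produces a canonical algebraic space structure on $\overline{\Phi}(B')$ compatible with its analytic structure, and since $B'$ is itself algebraic, $\overline{\Phi}$ is a morphism of complex algebraic spaces.

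The main obstacle I anticipate is the gluing step: different KNU charts come from distinct nilpotent cones and distinct toric embeddings, and one must confirm that on overlaps the closed analytic subspaces produced by Remmert's theorem coincide as analytic subspaces (not merely as sets). This hinges on tracking the log-analytic transition functions of $\Gamma\backslash D_\Sigma$ through the Stein-like reduction $B'\to\overline{\Phi}(B')$; a clean way to package the verification is to first show that the \emph{sheaf} $\overline{\Phi}_{\ast}\mathcal{O}_{B'}$ descends locally to a coherent analytic sheaf on $\overline{\Phi}(B')$, and then to identify its local presentations with the structure sheaves provided by the individual Remmert images.
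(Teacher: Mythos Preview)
The paper does not give an independent argument here: it simply cites \cite[Sec.~5]{Usu06} and observes that Usui's proof only uses properness of $\overline{\Phi}$, not compactness of $B'$. Your analytic half (local KNU charts embedding as locally closed analytic subspaces of classical manifolds, followed by Remmert's proper mapping theorem and gluing) is precisely the content of Usui's argument, so that part is on target and matches the literature the paper defers to.

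Your algebraicity half, however, has a genuine gap. First, invoking Theorem~\ref{Thm:MainTHMformal} to justify taking $B'$ compact is circular: the proof of Theorem~\ref{Thm:MainTHMformal} in Section~\ref{Sec06} explicitly applies Theorem~\ref{Thm:analyticimagepropermap} to the open pieces $W_k$, so you cannot appeal to the former while establishing the latter. Second, and relatedly, the paper's whole point in its one-line proof is that the result is needed for \emph{non-compact} $B'$ (the $W_k$ are open in $\olB$, not compact); your Moishezon--Artin route requires compactness of $\overline{\Phi}(B')$ and therefore does not cover the case actually used downstream. Third, a smaller slip: $\wp'$ is the Stein factor of $\Phi$, hence a finite cover of $\wp$, and it does not embed in $\Gamma\backslash D_\Sigma$; only $\wp=\Phi(B)$ does, so the ``Zariski-dense algebraic open'' you want is $\wp$, not $\wp'$. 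In practice the paper only ever uses the \emph{analytic} conclusion (see the first paragraph of Section~\ref{Sec06}), so your first half already delivers what is needed; if you insist on the algebraic-space upgrade in the generality stated, you must follow Usui's original argument rather than the BBT/Artin shortcut.
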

\begin{proof}
    This theorem is in \cite[Sec. 5]{Usu06}. Though the original theorem takes the assumption that $U$ is compact (for example, take $U=B'=\hat{B}$), the proof only uses the properness of $\overline{\Phi}|_U$ and hence applies to our case verbatim. 
\end{proof}

\section{The combinatorial monodromy complex (CMCX)}\label{Sec04}
%----------------------------------------------------

One important ingredient in our proof is the combinatorial monodromy complex. The term ''combinatorial" means we only record the combinatorial structure of the monodromy cone and ignore the structure of monodromy cones inherited from the Lie algebra containing it.

\subsection{The combinatorial monodromy cone (CMC) and adjacency map}\label{Sec:CMC}

For any $I\in \mathcal{I}$, we may choose any point $b\in Z_I^*$, a neighborhood $b\in E_b\subset \hat{B}$ such that $E_b^{*}:=E_b\cap B$ is isomorphic to $(\Delta^*)^{|I|}\times \Delta^{\mathrm{dim}(B)-|I|}$, and a nearby point $b'\in E_b^*$. For every $i\in I$ there is a monodromy logarithm $N_i\in \mathrm{End}(V_{b', \bQ})\simeq \mathfrak{g}_\bQ$ around the divisor $Z_i$. Let $\sigma_{I,b'}:=\mathrm{Conv}\{N_i, i\in I\}^{}$. There exists a linear map:
\[
\lambda_I:\mathrm{span}_\bQ\{\epsilon_i, i\in I\}\simeq\bQ^{|I|}\rightarrow \bQ\sigma_{I,b'} \ , \ \epsilon_i\xrightarrow{\lambda_i}N_i. 
\]
We denote the kernel of $\lambda_I$ as a linear map by $\mathrm{Ker}(\lambda_I)$ which is a subspace of $\bQ^{|I|}$. 
%On the other hand, we may also restrict $\lambda_I$ on $\bQ_{\geq 0}^{|I|}\rightarrow \bQ_{\geq 0}\sigma_{I,b'}$ and regard it as a morphism of monoids. Denote the kernel of this monoid morphism as $\mathrm{Ker}(\lambda_I^M)$ which is a submonoid of $\bQ_{\geq 0}^{|I|}$. 

It is clear that the monodromy cone $\sigma_{I,b'}$ is isomorphic to the image of $\bQ_{\geq 0}^{|I|}$ under the quotient map $\bQ^{|I|}\rightarrow \bQ^{|I|}/\mathrm{Ker}(\lambda_I)$. Its combinatorial structure is determined by the index set $I\subset \mathcal{I}$ and $\mathrm{Ker}(\lambda_I)$. It does not depend on the choice of $b\in Z_I^*$, the neighborhood $E_b$ around $b$, or the nearby point $b'$. 

\begin{definition}\label{def:CMC]}
    The combinatorial monodromy cone (CMC) $\psigma_I$ is defined to be the image of $\bQ_{\geq 0}^{|I|}$ under the map $\bQ^{|I|}\rightarrow \bQ^{|I|}/\mathrm{Ker}(\lambda_I)$.
\end{definition}
\begin{remark}
Starting from this point, for every $I\in \cI$ we will denote $\sigma_I$ or $\tau_I$ as the original monodromy cones as a subset of some $\mathrm{End}(V_{b, \bQ})$, while the corresponding boldface characters $\psigma_I$ or $\ptau_I$ as the corresponding CMC's. Note that as combinatorial cones, $\sigma_I\simeq \psigma_I^{}$.
\end{remark}

For any sub-index $I'\subset I$, the CMC $\psigma_{I'}$ is determined by the space $\mathrm{Ker}(\lambda_{I'})$, and $\psigma_{I'}\simeq \mathrm{Conv}\{N_i, i\in {I'}\}^{}\subset \mathrm{End}(V_{b', \bQ})$. The set-theoretic inclusion $\{\epsilon_i, i\in {I'}\}\hookrightarrow \{\epsilon_i, i\in I\}$ induces $\bQ^{|{I'}|}\hookrightarrow \bQ^{|I|}$, and $\mathrm{Ker}(\lambda_{I'})=\mathrm{Ker}(\lambda_I)\cap \bQ^{|{I'}|}$. We have the commutative diagram:
\begin{equation}\label{eqn:adjacencymapCMC}
\begin{tikzcd}
\bQ_{>0}^{|{I'}|} \arrow[d, hook] \arrow[r, hook, ""] & \bQ^{|{I'}|} \arrow[d, hook]\arrow[r] & \bQ^{|{I'}|}/\mathrm{Ker}(\lambda_{I'}) \arrow[d, hook] \\
\bQ_{>0}^{|I|}  \arrow[r, hook, ""] & \bQ^{|I|}\arrow[r] & \bQ^{|I|}/\mathrm{Ker}(\lambda_I) 
\end{tikzcd}
\end{equation}
\begin{definition}
   The map $\bQ^{|{I'}|}/\mathrm{Ker}(\lambda_J)\hookrightarrow \bQ^{|I|}/\mathrm{Ker}(\lambda_I)$ in the diagram \eqref{eqn:adjacencymapCMC} induces a canonical map $\xi_{I,I'}: \psigma_{I'}\hookrightarrow \psigma_I$ which we will call the adjacency map.
\end{definition}
 
%--------------------------------
\subsection{Definition of CMCX}
%--------------------------------

\begin{definition}\label{def:CMCX}
    A combinatorial monodromy complex (CMCX) $\mathcal{T}$ associated to $\bV\rightarrow B$ and $\hat{B}$ is defined by assigning each $I\in \mathcal{I}$ with $Z_I\neq \emptyset$ a polyhedral complex $\Sigma_{\mathcal{T}}(I)$ supported on the closed combinatorial monodromy cone $\psigma_I\subset \bQ^{|I|}/\mathrm{Ker}(\lambda_I)$, such that for any $I'\leq I$, the adjacency map $\xi_{I,I'}$ induces an embedding of subcomplexes $\Sigma_{\mathcal{T}}(I')\leq \Sigma_{\mathcal{T}}(I)$.
\end{definition}

One immediate example in which a CMCX exists and can be understood in the simplest way is:
\begin{example}\label{Example:stronglysimplicial}
    In the case when $\mathrm{dim}(\psigma_I)=\mathrm{codim}_{\hat{B}}(Z_I)$ for every $I\subset \mathcal{I}$ with $Z_I\neq \emptyset$, there is a canonical minimal CMCX $\mathcal{T}_0$ defined by associating each $I\subset \mathcal{I}$ the complex $\{\psigma_I \ \text{and all of its proper faces}\}$.
\end{example}

More generally, we may find the following cases in which a natural CMCX could be found. 

\begin{definition}\label{Def:simplicialvhs}
    We say the completion $\hat{B}$ of $B$ is simplicial corresponding to the $\bZ$-PVHS $\bV\rightarrow B$ or just simplicial, if for any $b\in Z_I^*\subset \hat{B}$ the CMC $\psigma_b$ is a simplicial cone, and for any $i\in I$ the local monodromy logarithm $N_i$ (view as a CMC) is a face of $\psigma_b$ via the adjacency map $\xi_{I,i}$. We say  $\bV\rightarrow B$ has simplicial local monodromy or just say  $\bV\rightarrow B$ is simplicial if such a completion $\hat{B}$ of $B$ exists.
\end{definition}

\begin{theorem}\label{Thm:simplicialCMCX}
If $\hat{B}$ is simplicial corresponds to $\bV\rightarrow B$, then the collection of cones $\mathcal{T}_0$ associating each strata $Z_I^*\subset \hat{B}-B$ the polyhedral complex $\{\text{All faces of }\sigma_I\}$ is a CMCX. In particular, this includes the case when $B$ is a curve or a surface.
\end{theorem}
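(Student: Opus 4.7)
The plan is to verify Definition~\ref{def:CMCX} directly for the collection $\mathcal{T}_0$. Since $\Sigma_{\mathcal{T}_0}(I)$ is by definition the face complex of $\psigma_I$, the only nontrivial content is showing that for every chain of indices $I'\subset I$ (with $Z_I\neq \emptyset$), the adjacent map $\xi_{I,I'}\colon \psigma_{I'}\hookrightarrow \psigma_I$ identifies the face complex of $\psigma_{I'}$ with a subcomplex of the face complex of $\psigma_I$. Injectivity of $\xi_{I,I'}$ was already observed in Section~\ref{Sec:CMC} via the identity $\mathrm{Ker}(\lambda_{I'})=\mathrm{Ker}(\lambda_I)\cap \bQ^{|I'|}$, so the remaining task is to match face posets.

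First, I would use the simplicial hypothesis to describe the face lattice of each $\psigma_J$ combinatorially. Because $\psigma_J$ is a simplicial cone whose extremal rays include each $\bQ_{\geq 0}N_j$ for $j\in J$ (by the condition that each $N_j$ is a face via $\xi_{J,\{j\}}$), every face of $\psigma_J$ has the form $\bQ_{\geq 0}\langle N_j : j\in J_0\rangle$ for some $J_0\subset J$; conversely, each such cone is a face because in a simplicial cone any subset of the extremal rays spans a face. Second, unwinding the commutative diagram~\eqref{eqn:adjacentmapCMC}, the map $\xi_{I,I'}$ carries the class of $\epsilon_j$ in $\bQ^{|I'|}/\mathrm{Ker}(\lambda_{I'})$ to the class of $\epsilon_j$ in $\bQ^{|I|}/\mathrm{Ker}(\lambda_I)$, i.e.\ it identifies the generator $N_j\in\psigma_{I'}$ with the generator $N_j\in\psigma_I$. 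Therefore for any $J_0\subset I'$, $\xi_{I,I'}$ sends the face $\bQ_{\geq 0}\langle N_j : j\in J_0\rangle$ of $\psigma_{I'}$ onto the cone $\bQ_{\geq 0}\langle N_j : j\in J_0\rangle$ inside $\psigma_I$, which is again a face of $\psigma_I$ by the simplicial hypothesis applied to $I$. This establishes the required embedding of subcomplexes.

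Finally, for the curve/surface case I would note that the simplicial hypothesis is automatic. Indeed, $\psigma_I$ is a quotient of $\bQ_{\geq 0}^{|I|}$, so when $|I|=\mathrm{codim}_{\hat B}(Z_I)\leq 2$ the cone $\psigma_I$ has dimension at most $2$, and every cone of dimension $\leq 2$ is simplicial. Moreover each $\bQ_{\geq 0}N_i$ is either an extremal ray of a two-dimensional simplicial cone, or equals $\psigma_I$ itself (when the CMC collapses to dimension $\leq 1$), and in either case it is a face. Hence the theorem follows from the general statement.

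The only conceptual subtlety I anticipate is bookkeeping when several generators $N_i$ span the same extremal ray of $\psigma_I$ (so that the map from abstract indices $I$ to rays of $\psigma_I$ is not injective). This is not a genuine obstruction: the face lattice is determined by the rays, not by their labels, and $\xi_{I,I'}$ is still injective by the kernel identity, so the image of each face of $\psigma_{I'}$ is a well-defined face of $\psigma_I$.
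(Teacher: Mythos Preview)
Your proposal is correct and takes essentially the same approach as the paper. The paper's own proof is a single sentence (``In these cases it is straight-forward to verify that every adjacent map $\xi_{I,I'}$ induces an embedding of sub-complexes''); your argument simply supplies the details of that verification by describing the face lattice of a simplicial CMC and checking that $\xi_{I,I'}$ carries faces to faces, together with the observation that cones of dimension at most two are automatically simplicial.
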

\begin{proof}
In these cases it is straight-forward to verify that every adjacency map $\xi_{I,I'}$ induces an embedding of sub-complexes.
\end{proof}

%--------------------------------
\subsection{CMCX and logarithmic modifications}
%--------------------------------

Note that a CMCX $\mathcal{T}$ depends on not only the original $\bZ$-PVHS $\bV\rightarrow B$ but also on a fixed completion $B\subset \hat{B}$. We may associate $\hat{B}$ the compactifying logarithmic structure corresponds to $(\hat{B},Z)$. Let $\olB\rightarrow \hat{B}$ be a logarithmic modification such that $\overline{Z}:=\olB-B=\bigcup_{J\in \hat{\cI}}Z_J$ is also an SNC divisor. Therefore, for any $b'\in \olB$, there is also an associated nilpotent orbit $(\sigma_{b'}, F_{b'})$ defined up to $\Gamma$ by Corollary \ref{cor:schmidNOT}.

\begin{lemma}
    If $b'\in Z_J^*\subset\olB$ lies above $b\in Z_I^{*}\subset\hat{B}$, then the map $\olB\rightarrow \hat{B}$ induces a map of CMC's $\psigma_J\rightarrow \psigma_I$, denoted as $\eta_{I,J}$ or $\eta_{b,b'}$.
\end{lemma}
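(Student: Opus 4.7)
The plan is to realize $\eta_{I,J}$ as the descent of an explicit non-negative integer linear map between ambient lattices, constructed from the local toroidal structure of the logarithmic modification $\pi:\olB\to\hat{B}$. The key observation is that the local monodromy logarithms around components of $\overline Z$ are non-negative integer combinations of the local monodromy logarithms around components of $Z$, with coefficients read off from the vanishing orders of pulled-back coordinate functions.

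Concretely, I would first choose polydisc neighborhoods $b\in U\cong \Delta^n\subset \hat{B}$ and $b'\in U'\cong \Delta^n\subset \olB$ with adapted coordinates $(t_1,\dots,t_n)$ and $(s_1,\dots,s_n)$, so that $t_i$ cuts out $Z_i$ for $i\in I$ and $s_j$ cuts out the local branch of $Z_j$ for $j\in J$. Because $\pi$ is a logarithmic modification of SNC pairs, hence toroidal, after further shrinking of $U'$ the pullback of each boundary coordinate has the monomial form
\[
\pi^* t_i \;=\; u_i\,\prod_{j\in J} s_j^{m_i^j}, \qquad i\in I,
\]
with $u_i$ a unit and $m_i^j\in\bZ_{\geq 0}$ equal to the order of vanishing of $\pi^*t_i$ along $Z_j$ at $b'$.

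Next I would compare monodromy logarithms. Circling once around $Z_j$ in $\olB$ corresponds, under $\pi$, to circling $m_i^j$ times around $Z_i$ for each $i\in I$; parallel transporting along a path from $b'$ to a nearby point of $\hat B\setminus Z$ identifies the local monodromy operators and yields
\[
N_j^{\olB} \;=\; \sum_{i\in I} m_i^j\, N_i^{\hat{B}} \qquad \text{in }\mathfrak{g}_{\bQ}.
\]
Define $\tilde\eta_{I,J}:\bQ^{|J|}\to \bQ^{|I|}$ by $\epsilon_j\mapsto \sum_{i\in I} m_i^j\epsilon_i$. Then $\tilde\eta_{I,J}$ sends $\bQ_{\geq 0}^{|J|}$ into $\bQ_{\geq 0}^{|I|}$ and the displayed identity says precisely that $\lambda_I\circ \tilde\eta_{I,J}=\lambda_J$. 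In particular $\tilde\eta_{I,J}(\mathrm{Ker}(\lambda_J))\subseteq \mathrm{Ker}(\lambda_I)$, so $\tilde\eta_{I,J}$ descends to a linear map $\bQ^{|J|}/\mathrm{Ker}(\lambda_J)\to \bQ^{|I|}/\mathrm{Ker}(\lambda_I)$ that carries the image of $\bQ_{\geq 0}^{|J|}$ into the image of $\bQ_{\geq 0}^{|I|}$. Restricting to these images gives the desired $\eta_{I,J}:\psigma_J\to\psigma_I$, with $\eta_{b,b'}=\eta_{I,J}$ by construction.

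The main obstacle I anticipate is verifying that the multiplicities $m_i^j$, and hence $\eta_{I,J}$, are canonical; a priori they depend on coordinate choices. This reduces to the functoriality of the characteristic log structure: $m_i^j$ is intrinsically the image of the generator of $\overline{\cM}_{\hat B,b}$ corresponding to $Z_i$ under the morphism of characteristic monoids $\overline{\cM}_{\hat B,b}\to \overline{\cM}_{\olB,b'}$ induced by $\pi$, evaluated in the $Z_j$-coordinate of the dual cone. Once this is in place, independence from coordinate choices is automatic, compatibility with the local monodromy identification follows from the fact that $N=\log T$ depends only on the local log structure and the local system, and the asserted commutative triangle $\lambda_I\circ\tilde\eta_{I,J}=\lambda_J$ — which is the substantive content of the lemma — is verified.
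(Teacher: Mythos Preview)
Your argument is correct and in fact more detailed than the paper's. The two proofs proceed along genuinely different lines. The paper argues geometrically via arcs: a $1$-parameter disc $\Delta_{b'}\subset\olB$ through $b'$ with $\Delta_{b'}^*\subset B$ descends under $\pi$ to a $1$-parameter disc $\Delta_b\subset\hat B$ through $b$, and the monodromy of the descended arc lies in $\sigma_b$; letting the arc vary one obtains the map $\psigma_J\to\psigma_I$. Your approach is the linear-algebraic dual of this: you read off the matrix $(m_i^j)$ of vanishing orders from the monomial form of $\pi^*t_i$ in toroidal coordinates and verify $\lambda_I\circ\tilde\eta_{I,J}=\lambda_J$ directly, which immediately gives the descent to $\psigma_J\to\psigma_I$ together with an explicit formula and a clean justification of canonicality via the morphism of characteristic monoids. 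The paper's proof is terser but leaves the well-definedness and linearity of the resulting map implicit; your version makes these transparent and also explains why the map is in fact injective (since $\lambda_I\circ\tilde\eta_{I,J}=\lambda_J$ forces $\ker(\eta_{I,J})=0$ on the quotient), which the paper asserts without comment.
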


\begin{proof}
    Any $1$-parameter neighborhood $b'\in \Delta_{b'}\subset \hat{B}$ with $\Delta_{b'}^*\subset B$ descends to a $1$-parameter neighborhood $b\in \Delta_{b}\subset \hat{B}$ with $\Delta_{b}^*\subset B$ via the map $\olB\rightarrow \hat{B}$. This gives an inclusion of some $N'\in \sigma_{b'}$ to some $N\in \sigma_b$. Running over all such $1$-parameter neighborhoods and look at the CMC, we have the desired inclusion $\eta_{J,I}$.
\end{proof}

\begin{definition}
    We refer to the map $\eta_{I,J}$ (or $\eta_{b,b'}$) as the descendant map.
\end{definition}

With the definition of the descendant map, one may define the compatibility of $\mathcal{T}$ and the logarithmic modification $\olB\rightarrow \hat{B}$ as follows.

\begin{definition}\label{def:CMCXcompatible}
     We say a CMCX $\mathcal{T}$ associated to $\olB$ is compatible with the logarithmic modification $\olB\rightarrow \hat{B}$ (or vice versa) if for any $b'\in\olB$ lying above $b\in \hat{B}$, there exists a unique $\ptau_b\in \Sigma_{\mathcal{T}}(b)$\footnote{For convention, by $\Sigma_{\mathcal{T}}(b)$ we mean $\Sigma_{\mathcal{T}}(I)$ for the strata $I$ containing $b$.} such that $\eta_{b',b}(\psigma_{b'}^{\circ})\subset \ptau_b^{\circ}$. Moreover, this unique $\ptau_b$ satisfies $(\tau_b, F_{b'})$ is a nilpotent orbit.
\end{definition}

\begin{remark}
    Starting from this point, When a descendant map $\eta_{I,J}$ or an adjacency map $\xi_{I,J}$ are clear, we ignore the map and denote the corresponding inclusions as $\psigma_J\hookrightarrow\psigma_I$ or $\psigma_J\subset \psigma_I$. 
\end{remark}

If we assume the global monodromy group $\Gamma$ is abelian, a CMCX is equivalent to a $\Gamma$-compatible fan $\Sigma$ given by \cite[Theorem 4.3.1(iii)]{KU08}. In general, the notion of a CMCX is different from the notation of fan as it depends more on the base of the map, and in particular does not require any kind of compatibility with global monodromy.

We breifly recall some of the literature of toroidal embeddings and toroidal/logarithmic modifications. Suppose a CMCX $\cT$ corresponds to the $\bZ$-PVHS $\bV\rightarrow B$ and the completion $B\subset \hat{B}$ with $\hat{B}-B$ an SNC divisor are given.  

 In the theory of \cite{KKMSD89}, the pair $(\hat{B}, B)$ is a \textit{toroidal embedding.} In particular, there exists a canonically-constructed toroidal fan $\pmb{C}$ associated to the toroidal embedding as \cite[pg. 71]{KKMSD89} which, in our case, is a topological polyhedral complex consisting of simplicial cones.

If $\pmb{C}'$ is another complex refining $\pmb{C}$, then there is another toroidal embedding pair $(\hat{B}', B)$ with toroidal fan $\pmb{C}'$ and a proper birational morphism $\hat{B}'\rightarrow \hat{B}$ restricting to the identity map on $B$. This is called a toroidal modification. As a summary, similar to the case for toric varieties, we have the correspondence:
\begin{center}
    \textit{\{Finite ``allowable" subdivisions of toroidal fans \}$\Leftrightarrow$ \{Toroidal modifications\},}
\end{center}
see \cite[pg. 90]{KKMSD89}. In our case, ``allowable" subdivisions are exactly finite rational polyhedral subdivisions.

The classical theory of toroidal embeddings and modifications gneeralize to the logarithmic setting, see \cite[Chap. 3.6]{KU08} for an overview. In particular, the term ``logarithmic modification" has the same meaning as ``toroidal modification" in our case, and we will use them interchangably.

With the general philosophy of logarithmic modifications and polyhedral subdivision of toroidal fans, we are ready to prove the first technical result of this paper below. Note that the proof uses definitions of semicomplex and the canonical complexification which are not defined until Section \ref{Sec08}.

%\jacob{Ok I buy this but I don't think it makes sense as written, and we need specific references. For example, $\hat{B}(I)$ doesn't make sense for an individual $I$, I think only the whole collection makes sense at once and the compatibility is essential for this. Anwho, this shouldn't be hard just need references for 1. Kato fans 2. Logarithmic modifications (I understand intuitively what you mean but the prcise notio is unclear and non-stndard) 3. This result that we can find a modification.}

\begin{theorem}\label{Thm:CMCXgiveslogmodification}
   Given a CMCX $\cT$ on $\hat{B}$, there exists a logarithmic modification $(\olB, B)$ of $(\hat{B}, B)$ compatible with $\cT$.
\end{theorem}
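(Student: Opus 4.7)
The plan is to reduce the construction to the standard dictionary between rational polyhedral subdivisions of the cone complex of a toroidal embedding and logarithmic modifications, as developed in \cite{KKMSD89} and adapted to the logarithmic setting in \cite{KU08}. With its compactifying log structure coming from $Z$, the pair $(\hat B, Z)$ is a toroidal embedding whose cone complex is obtained by gluing the ``monomial'' cones $\bQ_{\geq 0}^{|I|}$ (one for each $I \in \mathcal I$ with $Z_I\neq \emptyset$) along the face inclusions $\bQ_{\geq 0}^{|I'|}\hookrightarrow\bQ_{\geq 0}^{|I|}$ for $I'\subset I$. Producing a logarithmic modification $\olB\to\hat B$ therefore amounts to producing a compatible system of rational polyhedral subdivisions of these cones, one for each stratum.

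First I would pull back the CMCX $\cT$ along the quotient maps $\lambda_I\colon \bQ^{|I|}\twoheadrightarrow \bQ^{|I|}/\mathrm{Ker}(\lambda_I)\supset \psigma_I$. Concretely, each $\ptau\in \Sigma_\cT(I)$ is replaced by the rational polyhedral cone $\lambda_I^{-1}(\ptau)\cap \bQ_{\geq 0}^{|I|}$, giving a rational polyhedral subdivision $\widetilde\Sigma_\cT(I)$ of $\bQ_{\geq 0}^{|I|}$. Next I would verify that these subdivisions glue along faces: for $I'\subset I$ the relation $\mathrm{Ker}(\lambda_{I'})=\mathrm{Ker}(\lambda_I)\cap \bQ^{|I'|}$ together with the hypothesis that $\xi_{I,I'}$ identifies $\Sigma_\cT(I')$ with a subcomplex of $\Sigma_\cT(I)$ (Definition~\ref{def:CMCX}) implies that the restriction of $\widetilde\Sigma_\cT(I)$ to the face $\bQ_{\geq 0}^{|I'|}$ coincides with $\widetilde\Sigma_\cT(I')$. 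The collection $\{\widetilde\Sigma_\cT(I)\}$ therefore defines a subdivision of the whole cone complex of $(\hat B,Z)$. Applying the classical toroidal/log correspondence yields a logarithmic modification $\olB\to \hat B$ with $\olB-B=\bigcup_{J\in\hat\cI}Z_J$ an SNC divisor, whose strata $Z_J^*$ are in one-to-one correspondence with the maximal cones of the subdivision. If necessary, one may further refine the subdivision to ensure smoothness of $\olB$ without losing any of the required data.

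It remains to check the compatibility condition of Definition~\ref{def:CMCXcompatible}. Take $b'\in Z_J^*\subset \olB$ lying above $b\in Z_I^*\subset \hat B$. By construction of $\olB$, the cone $\bQ_{\geq 0}^{|J|}$ attached to the stratum of $b'$ is, up to isomorphism, the maximal cone of $\widetilde\Sigma_\cT(I)$ indexing $Z_J^*$; reducing modulo $\mathrm{Ker}(\lambda_J)$, the descendant map $\eta_{b,b'}$ identifies $\psigma_{b'}^\circ$ with the relative interior of a unique cone $\ptau_b\in \Sigma_\cT(I)$, and by uniqueness of the maximal cone through a given interior point this $\ptau_b$ is unique. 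Finally, Schmid's nilpotent orbit theorem (Theorem~\ref{Thm:SchmidNilp}) applied at $b'$ produces a nilpotent orbit $(\sigma_{b'},F_{b'})$; the identification $\sigma_{b'}\simeq \ptau_b$ in the Lie algebra coming from the descendant map transports this to a nilpotent orbit $(\tau_b,F_{b'})$ in the sense of Definition~\ref{definitionnilporbit}. I expect the main obstacle to be bookkeeping in the gluing step, namely verifying compatibility across all pairs $I'\subset I$ simultaneously rather than locally, but this is precisely what the CMCX compatibility condition is designed to guarantee.
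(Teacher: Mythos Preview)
Your proposal is correct and follows essentially the same route as the paper: both arguments pull back the CMCX along the linear maps $\lambda_I$ (what the paper calls the combinatorial monodromy maps $\mathfrak m_I:\pmb C_I\to\psigma_I$) to obtain compatible subdivisions of the Kato/toroidal cone complex of $(\hat B,Z)$, invoke the CMCX axiom to guarantee gluing, and then apply the standard toroidal dictionary (with a further simplicial refinement for smoothness) to produce $\olB$. The only packaging difference is that the paper first builds local modifications $\hat B(I)\to\hat B$ and then passes to a common $\olB$ factoring through all of them, whereas you assemble the global subdivision in one step; these are equivalent.

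One small imprecision worth correcting in your last paragraph: after the refinement for smoothness you no longer have an \emph{identification} $\sigma_{b'}\simeq\tau_b$, only the inclusion $\eta_{b,b'}(\psigma_{b'}^\circ)\subset\ptau_b^\circ$ demanded by Definition~\ref{def:CMCXcompatible}. More importantly, the nilpotent-orbit check does not follow from ``transporting'' along $\sigma_{b'}\hookrightarrow\tau_b$, since enlarging a cone can destroy the nilpotent-orbit property. The correct argument (and the one the paper uses) is the opposite inclusion: $\tau_b$ sits inside the full monodromy cone $\sigma_I$ at $b\in\hat B$, and $(\sigma_I,F_{b'})$ is already a nilpotent orbit by Schmid's theorem applied in $\hat B$, hence so is $(\tau_b,F_{b'})$ for the subcone $\tau_b\subset\sigma_I$.
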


\begin{proof}
    Let $\pmb{C}_{\hat{B}}$ be the toroidal fan associated to the toroidal embedding $(\hat{B}, B)$. It associates to each $I\in \mathcal{I}$ a closed simplicial cone $\pmb{C}_I$ with dimension same as the codimension of $Z_I^*\subset\hat{B}$ because of the SNC property. There is a combinatorial monodromy map $\mathfrak{m}_I: \pmb{C}_I\rightarrow \psigma_I$ sending the $1$-dimensional face of $\pmb{C}_I$ corresponding to the divisor $Z_i$ to the monodromy logarithm $\pmb{N}_i\subset \psigma_I$ (viewed as a CMC), and extended linearly. 

    For each $I\in \mathcal{I}$, since $\Sigma_\cT(I)$ is a polyhedral decomposition of the CMC $\psigma_I$, its pre-image under $\mathfrak{m}_I$ gives a polyhedral decomposition of $\pmb{C}_I$ which we denote as $\pmb{C}_{\cT}(I)$. By Definition \ref{Def:semicomplex} and Lemma \ref{Lem:StandardSemiComp}, the collection obtained from $\pmb{C}_{\hat{B}}$ by replacing every $\pmb{C}_K\leq \pmb{C}_I$ with the set $\pmb{C}_{\cT}(I)|_{\pmb{C}_K}$ is a semicomplex. Denote its canonical complexification by Definition \ref{Def:CanonicalComp} as $\pmb{C}_I'$. Since $\pmb{C}_I'$  is a refinement of $\pmb{C}_I$, it gives a logarithmic modification $\hat{B}(I)\rightarrow \hat{B}$. Let $\pmb{C}_{\olB}$ be a common refinement of all $\pmb{C}_I'$ and $\olB\rightarrow \hat{B}$ be the corresponding logarithmic modification. By further resolving $\olB$ if necessary, we may assume $\olB$ is smooth and $\olB-B$ is an SNC divisor.
    
     We show the logarithmic modification $\olB\rightarrow \hat{B}$ has the required properties. For any $b'\in Z_{J}^*\subset \olB$ lying above $b\in Z_I^*\subset \hat{B}$, by the construction of $\hat{B}$ there exists a unique cone $\pmb{c}_I\in \pmb{C}'_I$ such that locally around $b\in \hat{B}$, $b'$ lies above the boundary component of $\hat{B}(I)\rightarrow \hat{B}$ determined by $\pmb{c}_I$. The cone $\pmb{c}_I$ is mapped into a CMC $\ptau_{b'}\in \Sigma_{\cT}(I)$, and it is clear that $(\tau_{b'}, F_{b'})$ must be a nilpotent orbit as $(\sigma_I, F_{b'})$ is. 
\end{proof}

In the cases discussed by Theorem \ref{Thm:simplicialCMCX} where we have a canonical CMCX $\cT_0$ exists, the completion $\hat{B}$ of $B$ may be chosen in a simple way:

\begin{theorem}
    In the case $\hat{B}$ is simplicial corresponds to $\bV\rightarrow B$, $\olB=\hat{B}$ viewed as the trivial logarithmic modification of $(\hat{B}, B)$ is compatible with the CMCX $\cT_0$ defined by Theorem \ref{Thm:simplicialCMCX}.
\end{theorem}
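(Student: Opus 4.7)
The plan is to show that in this trivial-modification simplicial setting, the compatibility condition collapses to two essentially tautological observations about the face complex of $\psigma_I$, together with one appeal to Schmid's nilpotent orbit theorem. First I would unwind Definition \ref{def:CMCXcompatible} when $\olB = \hat{B}$: for every $b' \in \olB$ one has $b' = b$, the CMC $\psigma_{b'} = \psigma_b$ coincides with $\psigma_I$ for $b \in Z_I^*$, and the descendant map $\eta_{b',b}$ is the identity on $\psigma_I$. Thus the compatibility requirement reduces to the following statement: for each $b \in Z_I^*$ there exists a unique $\ptau_b \in \Sigma_{\cT_0}(I)$ whose relative interior contains $\psigma_I^\circ$, and moreover $(\tau_b, F_b)$ is a nilpotent orbit.

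Next I would take $\ptau_b := \psigma_I$, i.e.\ the top-dimensional (improper) face of its own face complex $\Sigma_{\cT_0}(I)$. The containment $\psigma_I^\circ \subset \psigma_I^\circ$ is tautological, and uniqueness is the standard polyhedral fact that $\psigma_I$ decomposes as the disjoint union of the relative interiors of all its faces, so the open cone $\psigma_I^\circ$ can meet only one such relative interior. Under the identification $\tau_b \simeq \sigma_I$, the nilpotent orbit condition $(\tau_b, F_b) = (\sigma_I, F_b)$ is precisely the output of Schmid's nilpotent orbit theorem (Theorem \ref{Thm:SchmidNilp}) that produced $F_b$ in the first place.

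There is essentially no technical obstacle; the entire substance was absorbed into Theorem \ref{Thm:simplicialCMCX} (that $\cT_0$ is indeed a CMCX) and into the simplicial hypothesis itself. The only point worth noting explicitly is that the simplicial condition forces the combinatorial monodromy map $\mathfrak{m}_I : \pmb{C}_I \to \psigma_I$ used in the proof of Theorem \ref{Thm:CMCXgiveslogmodification} to be an isomorphism of simplicial cones, so the preimage of $\Sigma_{\cT_0}(I)$ under $\mathfrak{m}_I$ is the toroidal structure that $\hat{B}$ already carries. Hence no further subdivision of the Kato fan is required, and the trivial logarithmic modification $\olB = \hat{B}$ realizes the general construction of Theorem \ref{Thm:CMCXgiveslogmodification} in this special case.
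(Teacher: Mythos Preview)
Your unpacking is correct and is exactly what the paper means by ``directly follows from the definition'': with $\olB=\hat B$ the descendant map is the identity, so the compatibility condition reduces to finding the unique face of $\psigma_I$ whose relative interior contains $\psigma_I^\circ$, which is $\psigma_I$ itself, and the nilpotent orbit condition is Schmid. One small caution on your closing remark: the simplicial hypothesis does \emph{not} in general force $\mathfrak{m}_I:\pmb{C}_I\to\psigma_I$ to be an isomorphism (that is the stronger condition $\dim\psigma_I=\mathrm{codim}_{\hat B}Z_I$ of Example~\ref{Example:stronglysimplicial}); distinct $N_i$ may land on the same ray of $\psigma_I$. Your conclusion---that the $\mathfrak{m}_I$-preimage of the face complex $\Sigma_{\cT_0}(I)$ is already a coarsening of the standard face fan on $\pmb{C}_I$, so no subdivision is needed---remains valid, but the stated reason should be adjusted.
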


\begin{proof}
    Directly follows from the definition.
\end{proof}

% ---------------------------------------
\section{Local monodromy and proper Kato--Nakayama--Usui maps}\label{Sec05}
% ---------------------------------------

During this section we fix a logarithmic modification $\overline B\rightarrow \hat{B}$ as well as a CMCX $\mathcal{T}$ compatible with $\olB$. We define $\overline{\mathcal{I}}$ is the collection of boundary strata of $\overline{B}$. Write $\olB-B=\bigcup_{J\subset \overline{\cI}}Z_J$ as the union of its smooth boundary strata, and $Z_J^{*}:=Z_J-\cup_{J\not \subset K}Z_K$.

%--------------------------------------------------
\subsection{The Nilpotent Cone Closure (NCC)}
%--------------------------------------------------

 Let $J\subset \overline{\cI}$ be a subset such that $Z_J\neq \emptyset$. For any point $y\in Z_J^{*}$ lying above some $b\in Z_I^{*}\subset \hat{B}$, by Definition \ref{def:CMCXcompatible} there exists a unique $\ptau_J\in \Sigma_{\cT}(I)$ such that $\psigma_y=\psigma_J\subset \ptau_J^{\circ}$. 

%Based on these data, we will construct the \textbf{nilpotent cone closure} $\cZ_J\subset \olB$ of $Z_J$.

We first define a space $\cZ_J^0\subset \olB-B$ as a union of strata in $\olB-B$. Suppose $K\subset \overline{\cI}$ such that $Z_{J\cup K}\neq \emptyset$. For any point $y'\in Z_K^{*}$, we have $\psigma_{y'}=\psigma_K\subset \ptau_{K}\in \Sigma_{\cT}(I')$ for some $I'\subset \cI$. Similarly for any $y''\in Z_{J\cup K}^{*}$ we may construct $\psigma_{y''}=\psigma_{J\cup K}\subset\ptau_{J\cap K}\in \Sigma_{\cT}(I'')$. Since $I'\subset I''$ implies $\Sigma_{\cT}(I')$ is a sub-complex of $\Sigma_{\cT}(I'')$, we have $\ptau_J$ and $\ptau_K$ are faces of $\ptau_{J\cup K}$. 

\begin{definition}
    We say two subsets $J, K\subset \cI$ are commensurable if $Z_{J\cup K}\neq \emptyset$, $\bQ\psigma_J\subset\bQ\ptau_K$, and $\bQ\psigma_K\subset\bQ\ptau_J$, all regarded as subspaces of $\bQ\ptau_{J\cup K}$. In this case we denote it as $J\sim K$.
\end{definition}

\begin{lemma}\label{Lem:CommStrataNCC}
    $J\sim K$ if and only if $Z_{J\cup K}\neq \emptyset$ and $\ptau_J=\ptau_K=\ptau_{J\cup K}$, in which case $J\sim J\cup K$ and $K\sim J\cup K$.
\end{lemma}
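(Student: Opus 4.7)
The plan is to prove the equivalence by repeatedly exploiting the principle that a face of a rational polyhedral cone is determined by its linear span, together with the key observation (built into the definition of CMCX compatibility) that $\psigma_J$ lies in the \emph{relative interior} of $\ptau_J$, hence $\bQ\psigma_J = \bQ\ptau_J$.

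For the easy direction, if $\ptau_J = \ptau_K = \ptau_{J\cup K}$ and $Z_{J\cup K}\neq \emptyset$, then trivially $\bQ\psigma_J \subset \bQ\ptau_J = \bQ\ptau_K$ and $\bQ\psigma_K \subset \bQ\ptau_K = \bQ\ptau_J$, so $J \sim K$. For the converse, assume $J \sim K$. First I would note that, as observed in the paragraph preceding the lemma, $\ptau_J$ and $\ptau_K$ are both faces of $\ptau_{J\cup K}$, obtained via the inclusions of CMCXs $\Sigma_{\cT}(I), \Sigma_{\cT}(I') \hookrightarrow \Sigma_{\cT}(I'')$. The compatibility condition (Definition \ref{def:CMCXcompatible}) forces $\psigma_J \subset \ptau_J^\circ$ and similarly for $K$ and $J\cup K$, and hence $\bQ\psigma_J = \bQ\ptau_J$, $\bQ\psigma_K = \bQ\ptau_K$, $\bQ\psigma_{J\cup K} = \bQ\ptau_{J\cup K}$.

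Now the hypotheses $\bQ\psigma_J \subset \bQ\ptau_K$ and $\bQ\psigma_K \subset \bQ\ptau_J$ translate to $\bQ\ptau_J \subset \bQ\ptau_K$ and $\bQ\ptau_K \subset \bQ\ptau_J$, so $\bQ\ptau_J = \bQ\ptau_K$. Since $\psigma_{J\cup K}$ is the CMC spanned by the monodromy logarithms $N_i$ for $i \in J \cup K$, we have
\[
\bQ\ptau_{J\cup K} \;=\; \bQ\psigma_{J\cup K} \;=\; \bQ\psigma_J + \bQ\psigma_K \;=\; \bQ\ptau_J + \bQ\ptau_K \;=\; \bQ\ptau_J.
\]
Both $\ptau_J$ and $\ptau_{J\cup K}$ are faces of the polyhedral cone $\ptau_{J\cup K}$ with the same linear span, hence they coincide; the same argument gives $\ptau_K = \ptau_{J\cup K}$. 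This yields the required chain $\ptau_J = \ptau_K = \ptau_{J\cup K}$.

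Finally, for the ``in which case'' clause, once $\ptau_J = \ptau_{J\cup K}$, we have $\bQ\psigma_J \subset \bQ\ptau_J = \bQ\ptau_{J\cup K}$ and $\bQ\psigma_{J\cup K} \subset \bQ\ptau_{J\cup K} = \bQ\ptau_J$, and $Z_{(J)\cup(J\cup K)} = Z_{J\cup K} \neq \emptyset$, so $J \sim J\cup K$; the same argument applied with the roles of $J$ and $K$ swapped gives $K \sim J\cup K$. The only potential obstacle is verifying carefully that $\ptau_J$ is indeed a face of $\ptau_{J\cup K}$ (rather than merely contained in its linear span), but this follows directly from the definition of CMCX: the adjacent map $\xi$ embeds $\Sigma_{\cT}(I)$ as a subcomplex of $\Sigma_{\cT}(I'')$, so $\ptau_J \in \Sigma_{\cT}(I)$ appears as an actual face of $\ptau_{J\cup K}\in \Sigma_{\cT}(I'')$. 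Thus the entire argument is essentially linear-algebraic, relying only on the compatibility axioms already set up.
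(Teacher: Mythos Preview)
There is a genuine gap in your argument. You assert that the compatibility condition $\psigma_J^{\circ}\subset\ptau_J^{\circ}$ forces $\bQ\psigma_J=\bQ\ptau_J$, and you use this (and the analogous equalities for $K$ and $J\cup K$) as the linchpin of the whole chain of equalities. But this implication fails in general: a ray through the relative interior of a two--dimensional cone has its interior contained in the interior of the big cone while the linear spans differ. Definition~\ref{def:CMCXcompatible} only demands $\eta_{b',b}(\psigma_{b'}^{\circ})\subset\ptau_b^{\circ}$, and the logarithmic modification $\olB\to\hat B$ in Theorem~\ref{Thm:CMCXgiveslogmodification} is allowed to be refined further (e.g.\ to achieve smoothness with SNC boundary), so there is no reason the CMC $\psigma_J$ of a stratum of $\olB$ should have the same dimension as the cell $\ptau_J$ of $\Sigma_{\cT}(I)$ receiving it. Once $\bQ\psigma_J=\bQ\ptau_J$ fails, your ``translation'' of the commensurability hypotheses into $\bQ\ptau_J=\bQ\ptau_K$ and the identity $\bQ\ptau_{J\cup K}=\bQ\psigma_{J\cup K}$ both collapse.

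The paper's proof avoids this pitfall by never invoking span equality. From $\bQ\psigma_K\subset\bQ\ptau_J$ and $\psigma_J\subset\ptau_J\subset\bQ\ptau_J$ one gets $\psigma_{J\cup K}=\mathrm{Conv}\{\psigma_J,\psigma_K\}\subset\bQ\ptau_J$. Since $\ptau_J^{\circ}$ is open in $\bQ\ptau_J$ and contains $\psigma_J^{\circ}$, a small perturbation of a point of $\psigma_J^{\circ}$ toward $\psigma_K$ lands in $\psigma_{J\cup K}^{\circ}\cap\ptau_J^{\circ}$; combined with $\psigma_{J\cup K}^{\circ}\subset\ptau_{J\cup K}^{\circ}$ and the fact that $\ptau_J$ is a face of $\ptau_{J\cup K}$, this forces $\ptau_J=\ptau_{J\cup K}$. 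Equivalently, if $\ptau_J$ were a \emph{proper} face of $\ptau_{J\cup K}$ then $\bQ\ptau_J$ would lie in a supporting hyperplane and hence miss $\ptau_{J\cup K}^{\circ}$, contradicting $\emptyset\neq\psigma_{J\cup K}^{\circ}\subset\bQ\ptau_J\cap\ptau_{J\cup K}^{\circ}$. Your linear-algebraic strategy can be repaired along these lines, but as written it rests on an unjustified step.
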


\begin{proof}
    Since $\ptau_J$ is an open neighborhood of $\psigma_J$ in $\bQ\ptau_J$ and $\psigma_K\subset \bQ\ptau_J$, we must have $\mathrm{Conv}\{\psigma_J, \psigma_K\}=\psigma_{J\cup K}\subset \bQ\ptau_J$ and has non-empty intersection with $\ptau_J$, hence $\ptau_J=\ptau_{J\cup K}$. Similarly $\psigma_{J\cup K}\subset \bQ\ptau_K$ and $\ptau_K=\ptau_{J\cup K}$, hence $J\sim J\cup K$ and $K\sim J\cup K$.
\end{proof}

\begin{remark}
    "$\sim$" is in general not an equivalence relation for the following reasons: 
    \begin{enumerate}
        \item Given $I\sim J$ and $J\sim K$, we may have $Z_I\cap Z_K=\emptyset$
        \item Even if $Z_I\cap Z_K\neq \emptyset$, the CMC's $\psigma_{I\cup J}, \psigma_{J\cup K}, \psigma_{I\cup K}$ could have different dimensions.
    \end{enumerate}
\end{remark}

\begin{definition}\label{Def:Nilpconeclosure}
 For any  $J\subset \overline{\cI}$, the \textit{nilpotent cone closure (NCC)} of $J$ relative to $\mathcal{T}$, denoted as $\fC_{\cT}(J)$ or simply just $\fC(J)$ when $\cT$ is fixed, is the equivalence class of $J$ for the equivalence relation generated by $\sim$.
  
\end{definition}
We denote $Z_{\fC(J)}:=\bigcup_{K\in \fC(J)}Z_K^{*}$.

\begin{example}
    In the case given by Example \ref{Example:stronglysimplicial}, we have $\fC_{\cT_0}(J)=:\fC(J)=\{J\}$ for every $J\in \overline{\cI}$ and $Z_J\neq \emptyset$.
\end{example}

%--------------------------------------------------
\subsection{Connected Limiting Fiber (CLF)}
%--------------------------------------------------

 For any index $K\subset\overline{\cI}$ and $p\in Z_K^{*}$, we may choose a neighborhood $p\in B_p\subset \olB$ such that $B_p\cap B$ is homeomorphic to a product of (punctured and unpunctured) disks. Let  $\cU_p:=\cU_p^-\cap B$.
Fix some $J\subset \overline{\cI}$. Let $\cU_{\fC(J)}\subset B$ be the punctured neighborhood of $Z_{\fC(J)}$ obtained by
\[
\cU_{\fC(J)}:=\bigcup_{p\in Z_{\fC(J)}}\cU_p, \ \
B_{\fC(J)}:=\bigcup_{p\in Z_{\fC(J)}}B_p
\]
where for each $p$, $\cU_p$ is a neighborhood supporting the existence of diagram \eqref{eqn:localperiodmapandlocalKNU}. Moreover, by shrinking the $\cU_p$ we may insist that $B_{\fC(J)}$ deformation retracts to $Z_{\fC(J)}$.

Let $\Gamma_{\fC(J)}$ be the monodromy group of $\bV|_{\cU_{\fC(J)}}\rightarrow \cU_{\fC(J)}$.
\begin{lemma}\label{Lem:localmonodromyCMCX}
    $\Gamma_{\fC(J)}$ stabilizes $\tau_J\subset \mathfrak{g}_\bQ$, and centralizes $\tau_J\subset \mathfrak{g}_\bQ$ with neat $\Gamma$.
\end{lemma}

\begin{proof}
    Let $p\in Z_K^*$ for some $K\in\fC(J)$, and let $q\in U_p$. By construction we may assign to such a $q$ a cone $\tau_q\subset \End(\bV_q)$ corresponding to the monodromy cone around $p$, and its image in $\hat{B}$. Moreover, by the definition of the $\sim$ relation this assignment glues up to a sub-sheaf of monoids of $\End(\bV)\mid_{U_{\fC(J)}}$. Hence, we learn that the monodromy $\Gamma_{\fC(J)}$ stabilizes $\tau_J$. 
    
    For the final claim, note that neat groups are torsion-free. Since automorphisms of polyhedral cones are finite, the claim follows. 
\end{proof}

\begin{corollary}\label{cor:KNUmapstrata}
There are well defined $KNU$ maps
\begin{equation}\label{eqn:localperiodmapandlocalKNU}
\begin{tikzcd}
\cU_{\fC(J)}\arrow[d] \arrow[r, "\Phi_{\fC(J)}"] & \Gamma_{\tau_J}\backslash D \arrow[d]\\
B_{\fC(J)} \arrow[r, "\Phi_{\fC(J)}^{\tKNU}"]  & \Gamma_{\fC(J)}\backslash D_{\tau_J}
\end{tikzcd}
\end{equation}
    
\end{corollary}

\begin{proof}
    This is an immediate application of Corollary \ref{cor:extendingperiodmapKNU}. Indeed, the fan consisting of $\tau$ and its faces is stabilized by $\Gamma_{\fC(J)}$ by Lemma \ref{Lem:localmonodromyCMCX}, so we get a weak fan strongly compaible with $\Gamma_{\fC(J)}$. Moreover the definition of CMCX implies that the variation $\bV$ restricted to $U_{\fC(J)}$ is compatible in the sense of Definition \ref{def:compatibilityweakfan}. Indeed, each relevant $\sigma^\circ$ is contained in a unique open  $\tau^\circ$, and this induces a nilpotent orbit. Hence the corollary applies and the claim is proven.
\end{proof}

\begin{definition}\label{Def:CLF}
    Let $A\subset \olB$ be an analytic subvariety. We say $A$ is a connected limiting fiber (CLF) if for some fixed $J\subset \overline{\cI}$ and its associated $\Phi_{\fC(J)}^{\tKNU}$, $A\subset Z_{\fC(J)}$ is a connected component of a fiber of $\Phi_{\fC(J)}^{\tKNU}$. 
\end{definition}
% Let  $A={\Phi_{\fC(J)}^{\tKNU}}^{-1}(w)$ and we take a small open ball $U$ around $w$, which injects into lifts to $\Gamma_{\tau_J}\bs D_{\tau_J}$ . Then setting $B_A:={\Phi_{\fC(J)}^{\tKNU}}^{-1}(U)$ and $\cU_A:=B_A\cap B$ we see that  such that  $\cU_A$ has  monodromy $\Gamma_A\subset \Gamma_{\tau_J}$. 

\begin{lemma}\label{lem:KNUproper}
The CLFs are compact.    
\end{lemma}

\begin{proof}
 Suppose that $A = \Phi_{\fC(J)}^{-1}(w)$ is a CLF. If $A$ is not closed then there exists a point $q\in\ol{A}$ such that $q\in Z_K^*$ and $Z_K\subset \ol{Z_I}$ for some $I\in \fC(J)$. Wlog we may take $I=J$. It follows by construction of the CMCX that $\tau_J$ is a face of $\tau_K$.   Now consider the  map $\Phi_q^{\tKNU}: B_q\to \Gamma_{\tau_K}\bs D_{\tau_K}.$

Let $\Gamma^1_J\subset \bfG(\bZ)$ be the arithmetic and centralizing $\sigma_J$ (and therefore preserving $W(\sigma_J)$). Note that $\Gamma_{\fC(J)}\subset \Gamma^1_J$. By \cite[Thm 6.4]{BBKT24} there is a natural $\bR_{\alg}$-definable structure on $\Gamma^1_J\bs A(\sigma_J)$ such that $\Phi_J:Z_{\fC(J)}\to \Gamma^1_J\bs A(\sigma)$ is definable. Now $A$ is a connected component of a fiber of $\Phi_J$, and therefore $A$ must be $\bR_{\an,\exp}$-definable.

As a consequence $A\cap B_q$ has finitely many connected components, and let $A_0$ be one that contains $q$ in its closure. 

Now, consider by Corollary \ref{cor:extendingperiodmapKNU} the continuous morphism $f:B_p\to \Gamma_{\tau_K}\bs D_{\tau_K}$. 
By construction, and Theorem \ref{Thm:kumainthmA} (2), it follows that $f(A_0)=w$ is a single point.
By continuity,  the set $f^{-1}(b)$ is closed, and so contains $q\in \overline{A_0}$, and hence $f(p)=f(A_0)\subset \Gamma_{\tau_J}\bs A(\sigma_J)$, which is a contradiction.
\end{proof}

% ---------------------------------------
\subsection{Construction of the analytic structure}
% ---------------------------------------
We define $\ol{\wp}$ to be the space of CLFs. There is a natural topology on $\ol{\wp}$ induced by the quotient map $\ol{\Phi}:\ol{B}\to \ol{\wp}$, which is trivially an extension of $\Phi$. 

Our goal this subsection is to prove the following theorem:

\begin{theorem}\label{thm:mainstein}
The ringed space $(\ol{\wp},\ol{\Phi}_*\cO^{\an}_{\ol{B}})$ is a normal, compact, holomorphic variety. 
\end{theorem}

The rest of the section is devoted to the proof of Theorem \ref{thm:mainstein}, which clearly implies Theorem \ref{Thm:MainTHMformal}.

\begin{proof}

First, we construct a suitable partition into subsets foliated by CLFs:

\begin{lemma}\label{lem:CLFPartition}
For each CLF $A\subset Z_{\fC(J)}$, there exists 
an open set $A\subset U\subset \cU_{\fC(J)}$
such that $\Phi^{\tKNU}_{\fC(J)}\!\!\mid_U:U\to\Phi^{\tKNU}_{\fC(J)}(U)$ is a proper map. In particular, $U$ is foliated by CLFs. 
\end{lemma}

\begin{proof}

Let $w=\Phi^{\tKNU}_{\fC(J)}(A)$. First, since $A$ is compact we may and do shrink $\cU_A$ so that $A=(\Phi^{\tKNU}_{\fC(J)})^{-1}(w)$. 

Next, let $U'\subset \cU_A$ be an open set such that $\ol{U'}\subset \cU_A$, and let $W':=\Phi^{\tKNU}_{\fC(J)}(\ol{U'}\bs U')$. Now $W$ is compact  since it is the image of a  compact set, and since it is Hausdorff it is also closed. Moreover,$w\notin W'$.

Next, $w\in W$ be an open neighborhood such that $\ol{W}\cap W'=\emptyset$. Then $$(\Phi^{\tKNU}_{\fC(J)})^{-1}(\ol{W})\cap \ol{U'} = (\Phi^{\tKNU}_{\fC(J)})^{-1}(\ol{W})\cap U'.$$ We now set $U:= (\Phi^{\tKNU}_{\fC(J)})^{-1}(\ol{W})\cap U'$ and claim that this satisfies the assumption. 

Indeed, the map 
$\ol{U'}\to \Phi^{\tKNU}_{\fC(J)}(\ol{U'})$ is a proper map, and $\Phi^{\tKNU}_{\fC(J)}\!\!\mid_U:U\to\Phi^{\tKNU}_{\fC(J)}(U)$ is simply the base change under the inclusion $\Phi^{\tKNU}_{\fC(J)}(U)\to \Phi^{\tKNU}_{\fC(J)}(\ol{U'})$. The claim follows.
\end{proof}

For each $A$, let $U_A$ denote a neighborhood such as the above. By Theorem \ref{Thm:analyticimagepropermap} we see that $U_A\to \Phi^{\tKNU}_{\fC(J)}(U_A)$ is a proper map of analytic varieties. Let $U_A\to W_A$ be its Stein factorization. Now define $\Phi_A$ to be the restriction of $\ol{\Phi}$ to $U_A$. 
Then $\Phi_A:U_A\to \ol{\Phi}(U_A)$ agrees set theoretically with $U_A\to W_A$. Since the topology on $W_A$ is the quotient topology from $U_A$, and its structure sheaf $\cO^{\an}_{W_A}$ agrees with the pushforward of the structure sheaf of $U_A$, the Theorem follows.
\end{proof}

% ---------------------------------------
\section{Examples of projective completions in dimension $2$}\label{Sec06}
% ---------------------------------------

The purpose of this section is to provide some examples in the surface case supporting Conjecture \ref{Conj:ProjectiveCompletion}.

Let us assume $S$ is a quasi-projective surface. The $\bZ$-PVHS $\bV\rightarrow S$ gives the period map $\Phi: S\rightarrow \Gamma\backslash D$. We fix a normal projective completion $\hat{S}$ of $S$ which we do not assume to be a SNC completion. Let $S^{\circ}\subset \hat{S}$ be the complement of codimension-$2$ strata. The main result of this section is:

\begin{prop}\label{Prop:ProjectiveCrit}
    Let $\ol{S}\to \hat{S}$ be a birational morphism, such that $\ol{S}$ is a smooth SNC completion of $S$, and $S^\circ\subset \ol{S}$. Assume that every CLF in $\ol{S}$ maps to a point in $\hat{S}$. Then the the completed period map $\overline{\Phi}:\olS\rightarrow \overline{\wp}$ constructed by Theorem \ref{Thm:MainTHMformal} has the property that $\ol{\wp}$ is projective.  
    % If for some (hence any) proper birational morphism $\olS\rightarrow \hat{S}$ restricting to the identity on $S^\circ$, every CLF (Definition \ref{Def:CLF}) of $\olS$ is mapped to a point in $\hat{S}$, then we may choose the completed period map $\overline{\Phi}:\olS\rightarrow \overline{\wp}$ constructed by Theorem \ref{Thm:MainTHMformal} to lie in the category of projective varieties.
\end{prop}

\begin{proof}
    Through the blow-down map $\pi:\olS\rightarrow \hat{S}$, any positive-dimensional CLF in $\olS$ must be mapped to a codimensional-$2$ point in $\hat{S}$ according to the assumption. 
    Therefore, since the completed period map $\olphi: \olS\rightarrow \overline{\wp}$ has connected fibers, we
    see that $\pi$ factors as
    $\olS\to \ol{\wp}\to \hat{S}$. The claim now follow by \cite[Thm 2.6.2.]{Abr2002}.
    
\end{proof}

%This criteria is powerful in practice: It avoids the steps finding a lift with neat monodromy as well as an SNC completion of $B$. 

Proposition \ref{Prop:ProjectiveCrit} will be used to justify the projectivity of the extended period map in the following example.

\subsection{Hosono--Takagi's example} We look at the family of Calabi-Yau threefolds studied by \cite{HT14}, \Cite{HT18}, \cite{Den22} and \cite{DR23}. The period map used here is the one introduced by \cite{HT14} and \cite{HT18}, which is a period map of type $(1,2,2,1)$ as follows: 
\begin{equation}
    \Phi_0: \mathbb{P}^2\backslash \mathrm{Dis}=:S\rightarrow \Gamma\backslash D.
\end{equation}
The discriminant locus is given by:
\begin{equation}
    \mathrm{Dis}:= D_0\cup D_1\cup D_2\cup C
\end{equation}
where $D_i$ are coordinate divisors in $\mathbb{P}^2$, and $C$ is an irreducible quintic with $6$ self-intersection nodes and $1$ tangent point of order $5$ with each coordinate divisor. The picture of this base can be found at \cite[Fig. 6.1]{HT14}. The precise monodromy group $\Gamma$ is not known, but from \cite[Sec. 7]{Den22} we know the algebraic monodromy group $\overline{\Gamma}^{\mathbb{Q}}$ is $\mathrm{Sp}(6,\mathbb{Q})$. In \cite[Sec. 4]{HT18} it is shown that $\Gamma$ is not neat.

Let $\varphi:S'\rightarrow S$ be a finite et\'ale morphism such that the lift $\bV'\rightarrow S'$ of $\bV\rightarrow S$ has neat monodromy group $\Gamma'\leq \Gamma$. We may find a projective completion $\overline{\varphi}:\overline{S'}\rightarrow \bP^2$ of $\varphi$ which is a finite morphism.  This can be done by normalizing $\bP^2$ in the function field of $S'$. As a consequence, the $\bZ$-PVHS $\bV'\rightarrow S'$ and $S'\subset\overline{S'}$ satisfy the condition in Proposition \ref{Prop:ProjectiveCrit} if and only if $\bV\rightarrow S$ and $S\subset \bP^2$ do. 

By blowing up $\bP^2$ at each tangent point for $5$ times, we get a smooth projective completion $\olS$ of $S$ with SNC boundary divisors.
% take the strict transform of $B_0$, and lift $B_0$ to some finite etale cover $B$ to obtain neat monodromy $\Gamma\leq \Gamma_0$, we get a lifted period map:
% \begin{equation}\label{Fig:liftbyneatsubgroup}
% \begin{tikzcd}
% B\arrow[d] \arrow[r, "\Phi"] & \Gamma\backslash D \arrow[d] \\
% B_0 \arrow[r, "\Phi_0"] & \Gamma_0\backslash D
% \end{tikzcd}
% \end{equation}
% where the base $\widetilde{\mathbb{P}^2\backslash \mathrm{Dis}}=:S$ is quasi-projective with normal-crossing boundary divisors. 
We mark all local monodromy nilpotent cones as follows:
\begin{enumerate}\label{alllocalmonodromynilporbits}
    \item Let $\sigma_x, \sigma_y, \sigma_z$ be the ($2$-dimensional) local monodromy nilpotent cones around the intersection of each pair of coordinate divisors;
    \item Let $\sigma_0, \sigma_1, \sigma_2$ be the local monodromy nilpotent cones obtained by blowing-up each of the fifth tangent point. As \cite[Sec. 6.1]{Den22} shows, the blow-up process annihilates the order-$5$ semisimple part of the monodromy operators around $C$ and leave the unipotent part invariant;
    \item Let $\tau_j, \ j=1,...,6$ be the ($2$-dimensional) local monodromy nilpotent cones obtained from all self-intersection nodes of $C$.
    \item Let $\mathcal{E}$ be the remaining cones around the exceptional divisors.
\end{enumerate}
By computational results in \cite{HT14} and \cite{HT18}, all $2$-dimensional local monodromy nilpotent orbits and their Hodge degeneration types are listed as follows. Here we use the LMHS types defined by \cite[Example 5.8]{KPR19}.
\begin{enumerate}\label{typeofalllocalmonodromy}
    \item $\sigma_x, \sigma_y, \sigma_z$ are all of the type $\langle \mathrm{IV}_2|\mathrm{IV}_2|\mathrm{IV}_2\rangle$;
    \item $\sigma_0, \sigma_1, \sigma_2$ are all of the type
    $\langle\mathrm{IV}_2|\mathrm{IV}_2|\mathrm{I}_1\rangle$;
    \item $\tau_j, \ 1\leq j\leq 6$ are all of the type
    $\langle\mathrm{I}_1|\mathrm{I}_2|\mathrm{I}_1\rangle$.
    \item Any ($1$-dimensional) cone in $\mathcal{E}$ is of the type 
    $\langle\mathrm{I}_1|\mathrm{I}_1|\mathrm{I}_1\rangle$.
\end{enumerate}

\begin{lemma}
    In $\olS$ the only possible positive-dimensional CLF are the exceptional divisors of $\olS\rightarrow \bP^2$. 
\end{lemma}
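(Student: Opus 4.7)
The plan is to combine Lemma \ref{Lemma:recordtheLMHS} with Schmid's nilpotent orbit theorem to show that the limiting Hodge flag varies non-trivially along the proper transforms $\widetilde{D}_i$ and $\widetilde{C}$, so that any positive-dimensional CLF must sit inside the exceptional locus of $\olB \to \bP^2$. Since $\olB - B$ is a curve, a positive-dimensional CLF $A$ is a union of irreducible curves, and by treating irreducible components separately we may assume $A$ is itself irreducible. If $A$ is not contained in any exceptional divisor, then its generic point lies in the relative interior $Z_J^*$ of the proper transform of either a coordinate axis $D_i$ or the quintic $C$; at such points the local monodromy $\tau_J$ is the rank-one cone generated by the unipotent logarithm $N_J$ around $Z_J$.

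At such a generic $b \in A \cap Z_J^*$, Lemma \ref{Lemma:recordtheLMHS} requires that the class of the nilpotent orbit $(\tau_J, F_b)$ be constant as $b$ varies along $A$. Choosing a local transverse coordinate $t$ to $Z_J$ and a parameter $s$ along $A$, the asymptotic expansion \eqref{eqn:localliftperiodmap} from Theorem \ref{Thm:SchmidNilp} writes the local lift as $\widetilde{\Phi}(t,s) = \exp\bigl(\tfrac{\log t}{2\pi i} N_J\bigr)\,\psi(t,s)$, and constancy of $F_b = \psi(0,s)$ along $A$ forces $\partial_s \psi(0,s) \equiv 0$. Iterating Schmid's theorem with respect to the normal direction, or propagating the first-order degeneracy via Griffiths transversality, one deduces that $\partial_s \widetilde{\Phi}(t,s)$ vanishes to all orders in $t$ along $t = 0$ in a whole tubular neighborhood of $A$ inside $B$.

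The final step is to derive a contradiction from this forced degeneracy. Since the Hosono--Takagi period image has dimension $2 = \dim B$ by the computations of \cite{HT14, HT18, Den22}, the Stein factorization $\Phi' : B \to \wp'$ is generically finite, so its differential has full rank on an open dense subset. A tubular neighborhood of $A$ in $B$ meets this open set, and the $s$-direction degeneracy of $d\widetilde{\Phi}$ established above would force $\Phi'$ to drop rank on an open subset, a contradiction. Hence no $A$ on a non-exceptional divisor exists, and all positive-dimensional CLFs must lie in the exceptional locus.

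The main obstacle is the rigorous propagation step: making precise that first-order constancy of $F_b$ along $A$ genuinely forces higher-order degeneracy of $\widetilde{\Phi}$ on a tubular neighborhood. This is expected from the rigidity of nilpotent orbits, but it either requires a careful Taylor-series expansion of $\psi(t,s)$ in $t$ combined with the horizontality condition, or an appeal to the $\mathfrak{sl}_2$-orbit theorem applied to the $1$-parameter family of degenerations indexed by $s \in A$.
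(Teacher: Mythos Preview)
Your propagation step is not valid, and it is the crux of your argument. Constancy of the limiting flag $\psi(0,s)$ along the boundary says nothing about $\partial_s\psi(t,s)$ for $t\neq 0$: a map such as $\psi(t,s)=\exp(tsX)\cdot F_0$ (with $X$ horizontal) already shows that $\psi(0,s)\equiv F_0$ while $\partial_s\widetilde\Phi$ is nonzero on every punctured disc. Neither Griffiths transversality nor the $\mathrm{SL}_2$-orbit theorem produces the ``vanishing to all orders'' you claim; horizontality constrains the \emph{direction} of $d\widetilde\Phi$, not its rank. In fact the implication you want is false in general: whenever the limiting mixed Hodge structure along a boundary divisor is rigid (has no moduli), that divisor is automatically a CLF regardless of whether $\Phi$ is generically finite on $B$. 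So generic finiteness of $\Phi$ cannot by itself rule out boundary CLFs.

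The paper's argument is completely different and avoids this issue. It does not analyse $F_b$ at all. Instead it uses Lemma~\ref{lem:bouondaryconditionofCLF}: if a CLF $A$ has $A_J\neq\emptyset$ and $\overline{A_J}$ meets a deeper stratum $Z_K$, then $\ptau_K\le\ptau_J$. Now each proper transform $\widetilde D_i$ meets another $\widetilde D_j$ at a coordinate point where the cone $\sigma_x,\sigma_y,\sigma_z$ (type~(1)) is $2$-dimensional, and $\widetilde C$ meets itself at the six nodes where $\tau_j$ (type~(3)) is $2$-dimensional. Since $\ptau_J$ along a single boundary divisor is $1$-dimensional, the inequality $\ptau_K\le\ptau_J$ fails, and none of $\widetilde D_0,\widetilde D_1,\widetilde D_2,\widetilde C$ can be a CLF. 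This is a purely combinatorial obstruction coming from the monodromy cones at codimension-two strata, not from variation of the limiting Hodge filtration.
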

\begin{proof}
    There are no curves in $S$ contracted by $\Phi_0$ as in $\bP^2$ the closure of any such curve must intersect the boundary divisor $\mathrm{Dis}$ while no points on $\mathrm{Dis}$ give the trivial monodromy. The fact that nilpotent cones of type (1) and (3) listed above are of two-dimensional implies non of $D_0, D_1, D_2$ or $C$ can be a $1$-dimensional CLF.  
\end{proof}

As a consequence, if we choose $\bP^2$ instead of $\olS$ to be the projective completion of $S$, the assumption of Proposition \ref{Prop:ProjectiveCrit} is met and we may conclude the period map $\Phi_0$ (after being lifted to the finite et\'ale cover $S'\rightarrow S$) admits a projective completion by Theorem \ref{Thm:MainTHMformal}.

\subsection{More examples}

In the paper \cite{CD24}, Chongyao Chen and the first author have investigated several $2$-parameter families of Calabi-Yau threefolds defined over normal rational surfaces as well as their period mappings. The LMHS types along each boundary strata are analyzed in detailed in \cite[Sec. 4]{CD24}. We claim that all of these families satisfy the criteria given by Proposition \ref{Prop:ProjectiveCrit}, therefore the corresponding period mappings admit projective completions. We left the details to readers.

\section{Algebraicity of weakly special subvarieties}\label{Sec07}
% ---------------------------------------

In this section we introduce another application of our main theorem. It is well-known that weakly special subvarieties associated to the period mapping $\Phi: B\rightarrow \Gamma\backslash D$ are algebraic subvarieties of $B$. The original version of this theorem is due to \cite{CDK95} and in this form it was proven in \cite{BKT20}, \cite{BBT22} using  o-minimal geometry and the definability of period mappings, though it follows formally from \cite{CDK95}. We will give yet another proof of this theorem using log-manifolds. 

Regarding the necessary backgrounds, we refer readers to \cite{KO21} or \cite{Kl22} for a comprehensive overview. 

\subsection{Weakly special subvarieties}

Let $(\mathbf{G}, D)$ be the Hodge datum associated to the $\bZ$-PVHS $\bV\rightarrow B$ and the period map $\Phi: B\rightarrow \Gamma\backslash D$. A weak Hodge datum $(\mathbf{H}, M)$ is given by a weak Mumford-Tate subdomain $M\leq D$ and its automorphism group $\mathbf{H}\leq \mathbf{G}$ defined over $\bQ$. Let $\Gamma_H:=\Gamma\cap H:=\mathbf{H_\bQ}$, we have a natural embedding of Hodge varieties $\Gamma_H\backslash M\subset \Gamma\backslash D$.

\begin{definition}
    An irreducible analytic component of $\Phi^{-1}(\Gamma_H\backslash M)$ in $B$ is called a weakly special subvariety with Hodge datum $(\mathbf{H}, M)$.
\end{definition}

As mentioned above, the following is a well-known result in Hodge theory and functional transcendence. It is first proved by \cite{CDK95} for the case of special subvarieties, then generalized to the weakly case.

\begin{theorem}\cite[Prop. 4.8]{KO21}\label{Thm:weaklyspecialalgebraic}
   $\Phi^{-1}(\Gamma_H\backslash M)$ is an algebraic subvariety of $B$. In particular, weakly special subvarieties of $B$ are algebraic. 
\end{theorem}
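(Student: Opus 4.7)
The plan is to use the completed period map $\olphi : \olB \to \overline{\wp}$ from Theorem \ref{Thm:MainTHMformal}, together with its Hodge-theoretic content, to promote any weakly special $W \subset B$ to an analytic subvariety of the projective variety $\olB$, and then invoke Chow's theorem.

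Let $W$ be an irreducible component of $\Phi^{-1}(\Gamma_H \backslash M)$ and $\overline{W}$ its closure in $\olB$. Passing to a suitable lift, the Hodge flag $F_b$ at every $b \in W$ lies in $\check M$ and the local monodromy of $\bV|_W$ takes values in $\bold H$. Applying Schmid's nilpotent orbit theorem (Theorem \ref{Thm:SchmidNilp}) to sequences inside $W$ converging to a boundary point $b' \in \overline{W} \setminus B$, one finds that the limit Hodge filtration $F_{b'}$ lies in $\check M$ and that the subcone of $\tau_{b'}$ generated by local monodromies along the divisors of $\olB$ traversed by $\overline{W}$ lies in $\mathfrak{h}_\bQ := \mathrm{Lie}(\bold{H}_\bQ)$. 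Hence at every $b' \in \overline{W}$ the datum $\olphi(b')$ admits a factorization through a nilpotent orbit for the weak Hodge subdatum $(\bold H, M)$.

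I would then globalize: let $L \subset \olB$ be the locus of points admitting such a factorization. Using the cover $\olB = \bigcup_k W_k$ from Theorem \ref{Thm:localKNUmaps}, each intersection $W_k \cap L$ is $\olphi_k^{-1}$ of a closed log-analytic subspace of $\Gamma_k \backslash D_{\tau_k}$, namely the Kato--Usui period torsor realization \cite[Chap. 3]{KU08} of the KNU space for $(\bold H, M)$ sitting inside that of $(\bold G, D)$. Patching these loci via the mechanism of Proposition \ref{Prop:PatchingEquivRelations} (adapted to the subdatum) exhibits $L$ as a closed analytic subvariety of $\olB$ containing $\overline{W}$ as a union of irreducible components. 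Since $\olB$ is projective by Theorem \ref{Thm:MainTHMformal}(1), Chow's theorem yields that $\overline{W}$, and hence $W = \overline{W} \cap B$, is algebraic; a finite union over components then delivers the algebraicity of $\Phi^{-1}(\Gamma_H \backslash M)$.

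The main obstacle will be the globalization step: verifying that the KNU description for $(\bold H, M)$ embeds as a \emph{closed analytic} subspace of the ambient $\Gamma_k \backslash D_{\tau_k}$, and compatibly across the patches $W_k$, rather than merely as a topological or set-theoretic subspace. This is the same topological-to-analytic upgrade carried out in Section \ref{Sec06} for $\overline{\wp}$ itself, now executed for a subdatum inclusion. A related subtlety is that $\tau_{b'}$ may strictly contain the subcone coming from $\overline{W}$, so the factorization condition on $\olphi(b')$ must be formulated as a closed analytic constraint on the full LMHS data rather than on the nilpotent cone alone; I would handle this by cutting out the subdatum locus using the period torsor equations on the filtration part and imposing the $\mathfrak{h}_\bQ$-condition on the relevant face of $\tau_{b'}$ determined by $\overline{W}$.
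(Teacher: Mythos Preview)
Your proposal is correct and follows essentially the same route as the paper: define on each chart $W_k$ the subdatum locus via the relative period torsor $E_{\tau_k,M}=(\mathrm{toric}_{\tau_k\cap\fh}\times\check M)\cap E_{\tau_k}$, prove its image $\mathfrak{M}_k$ embeds as a logarithmic submanifold of $\Gamma_{\tau_k}\backslash D_{\tau_k}$ (this is the paper's Proposition \ref{prop:logsubmanifold}, resolving exactly the obstacle you flagged), pull back to an analytic subspace of each $W_k$, patch, and apply GAGA. The paper bypasses your component-by-component closure argument by working directly with the full preimage $\Phi^{-1}(\Gamma_H\backslash M)$, and the patching step uses only connectedness of the $\olphi_k$-fibers rather than Proposition \ref{Prop:PatchingEquivRelations}; your worry about $\tau_{b'}$ strictly containing the subcone from $\overline{W}$ is handled simply by taking $\tau_k\cap\fh$ in the torsor, with no face-tracking relative to $\overline{W}$ needed.
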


The proof uses the definability of period mappings \cite{BKT20} and o-minimal geometry \cite{BBT22}. We will give another proof of this result using the generalized toroidal completion given by Theorem \ref{Thm:MainTHMformal}.  The main idea is to extend the analytic condition on $B$ defining the weakly special subvarieties to $\olB$ over which a extended period map provided by Theorem \ref{Thm:MainTHMformal} exists. This may be seen as a generalization of \cite{CDK95} using the language of logarithmic geometry.

\subsection{The period torsors}

We first introduce the construction of period torsors following \cite[Chap. 3]{KU08}. Fix a monodromy cone $\sigma\subset \mathfrak{g}_{\bQ}$ whose integral monoidal structure is given by $\exp(\sigma_{\bC})\cap G_{\bZ}$. Let $\Gamma(\sigma)\leq \Gamma$ be the submonoid consists of elements whose logarithms lie in $\sigma$.  Let $\mathrm{toric}_{\sigma}$ be the affine toric variety associated with $\sigma$ and $\mathrm{torus}_{\sigma}$ be the torus embedding. We have the natural identification:
\begin{align}
    &\mathrm{toric}_{\sigma}\simeq \mathrm{Hom}(\Gamma(\sigma)^{\vee}, \bC),\\
    &\mathrm{torus}_{\sigma}\simeq \mathrm{Hom}(\Gamma(\sigma)^{\vee \mathrm{gp}}, \bC^{*})\simeq C^{*}\otimes \Gamma_{\sigma}
\end{align}
where the dual and ``Hom" are taken in the category of multiplicative monoids. Define
\begin{align}
\check{E}_{\sigma}&:=\mathrm{toric}_{\sigma}\times \check{D}\\
\nonumber \tilde{E}_{\sigma}&:=\{(q,F)\in \check{E}_{\sigma} \ | \ NF^{\bullet}\subset F^{\bullet-1} \ \forall  N\in \sigma(q)\}\\
\nonumber E_{\sigma}&:=\{(q,F^{})\in \tilde{E}_{\sigma} \ | \ (\sigma(q), \mathrm{exp}(\mathrm{log}_{\Gamma_{\sigma}}q)F) \hbox{ is a } \sigma(q)-\hbox{nilpotent orbit } \}
\end{align}
Here $\sigma(q)\leq \sigma$ is the face corresponds to the torus orbit containing $q$.

Following Kato--Usui's book \cite{KU08}, we will see these spaces carry rich structures in the theory of logarithmic geometry. Regarding basic definitions and examples in logarithmic geometry including logarithmic analytic spaces, logarithmic differential forms, and the strong topology, we refer readers to \cite[Chap. 2,3]{KU08}. Here we introduce one of the most important definitions.

\begin{definition}\cite[Def. 3.5.7]{KU08}
    By a logarithmic manifold $X$, we mean a $\bC$-logarithmic local ringed space with an open cover $\{U_\lambda\}_{\lambda\in \Lambda}$ such that for each $\lambda\in \Lambda$ there exists a logarithmically smooth fs logarithmic analytic space $Z_\lambda$ and a finite subset $I_\lambda$ of logarithmic differential $1$-forms on $Z_\lambda$, such that
    \[
    U_{\lambda}\simeq \{z\in Z_\lambda, \ I_{\lambda,z}=0\}
    \]
    where the RHS is endowed with the strong topology in $Z_\lambda$ and the pull back of the logarithmic structure on $Z_\lambda$. In this case, we call $(Z_\lambda, I_\lambda)$ a chart of $X$.
\end{definition}

%By the definition of logarithmic manifolds \cite[Definition 3.5.7]{KU08}, $\tilde{E}_{\sigma}$ and $E_{\sigma}$ is locally charted by the vanishing loci of some finitely-generated ideal $\mathcal{L}$ of the sheaf of logarithmic $1$-forms $\omega^1_{\check{E}_{\sigma}}$ over $\check{E}_{\sigma}$. 

By \cite[Chap. 3]{KU08}, $\check{E}_{\sigma}$ admits a natural fs logarithmic analytic structure from $\mathrm{toric}_{\sigma}$ via the projection map, and $\tilde{E}_{\sigma}$ and $E_{\sigma}$ admits the structure of logarithmic manifolds. Their local charts are described as follows: Let $\cF^{\bullet}$ be the universal Hodge filtration of $\mathcal{O}_{\check{D}}\otimes_{\bZ}V_{\bC}$ over $\check{D}$, $\mathcal{G}:=\mathcal{O}_{\check{D}}\otimes_{\bC}\sigma_{\bC}$ and $\mathcal{G}^{-1}:=\{X\in \mathcal{G}, \ X\cF^{\bullet}\subset \cF^{\bullet-1}\}$. Let $\mathcal{P}$ be the pullback of $\cG/\cG^{-1}$ to $\tilde{E}_{\sigma}$ via the projection map. There is a canonical projection morphism $\eta: \mathcal{O}_{\check{E}_{\sigma}}\otimes_{\bC} \sigma_{\bC}\rightarrow \cP$ giving a global section of $(\mathcal{O}_{\check{E}_{\sigma}}\otimes_{\bC} \sigma_{\bC})^{\vee}\otimes \cP$ which is isomorphic to $\omega^1_{\check{E}_{\sigma}}\otimes \cP$. Therefore, the ideal $\mathcal{L}\subset \omega^1_{\check{E}_{\sigma}}$ is generated by the coordinates of $\eta$ under local frames of $\cP$.

Consider the map:
\begin{equation}\label{Eqn:sigmatorsormap}
    \Theta_{\sigma}: E_{\sigma}\rightarrow \Gamma_{\sigma}\backslash D_{\sigma}, \ (q, F)\rightarrow (\sigma(q), \mathrm{exp}(\mathrm{log}_{\Gamma_{\sigma}}q)F)
\end{equation}
 which is the quotient of the action:
\begin{equation}\label{Eqn:torsoraction}
    a\in \sigma_{\mathbb{C}}: E_{\sigma}\rightarrow E_{\sigma}, \ (a, (q,F))\rightarrow (\mathbf{e}(a)q, \exp(-a)F)
\end{equation}
where the map $\mathbf{e}(a)$ is defined by
\begin{equation}
    \mathbf{e}(z\log(\gamma))=\exp(2\pi iz)\otimes \gamma\in \mathrm{torus}_{\sigma}.
\end{equation}
This realizes $E_{\sigma}$ as a $\sigma_{\mathbb{C}}$-torsor over $\Gamma_{\sigma}\backslash D_{\sigma}$ in the category of logarithmic manifolds by \cite[Section 7.2]{KU08}.

According to \cite[Section 7.3.5]{KU08}, for $x:=(q, F)\in E_{\sigma}\subset \check{E}_{\sigma}$, the structure of logarithmic manifold on $\Gamma_{\sigma}\backslash D_{\sigma}$ around $\Theta_{\sigma}(x)$ is described as follows: Let $\check{A}_x\subset \check{E}_{\sigma}$ be an analytic variety through $x$ such that:
\begin{enumerate}
    \item The projection of $\check{A}_x$ to $\mathrm{toric_{\sigma}}$ is an open subvariety $U_0$ of $\mathrm{toric}_{\sigma(q)}$.
    \item The projection of $\check{A}_x$ to $\check{D}$ is a subvariety $U_1$ of $\check{D}$ intersecting $\exp(\sigma(q)_{\bC})F$ transversely at $F$, and $\mathrm{dim}(U_1)=\mathrm{dim}(\check{D})-\mathrm{dim}(\sigma(q))$.
    \item $\check{A}_x\simeq U_0\times U_1$.
\end{enumerate}
Let $A_x:=\check{A}_x\cap E_{\sigma}$, then $A_x\subset E_{\sigma}$ acquires a structure of logarithmic submanifold of $E_{\sigma}$, and by \cite[Lemma 7.3.3]{KU08}, We may associate the logarithmic manifold structure on $\Gamma_{\sigma}\backslash D_{\sigma}$ by regarding $A_x$ as a chart around $\Theta_{\sigma}(x)$.

\begin{remark}
    The construction of period torsor has been generalized to the Mumford-Tate domain setting in \cite{KP16} which is almost identical to the ambient period domain case in \cite{KU08}. We remark that similar construction also applies to the weak Mumford-Tate domain setting. 
\end{remark}

\subsection{The relative period torsors}

For the fixed weak Hodge datum $(\mathbf{H}, M)$ of $(\mathbf{G}, D)$. Denote $\mathfrak{h}:=\mathrm{Lie}(\mathbf{H})$ which is a Lie subalgebra of $\mathfrak{g}$, and $\sigma_{\fh}:=\sigma\cap\mathfrak{h}$. Define 
\begin{equation}
    \check{E}_{\sigma, M}:=\mathrm{toric}_{\sigma_{\fh}}\times \check{M}
\end{equation}
where $\check{M}$ is the compact dual of $M$. $\check{E}_{\sigma, M}$ may be realized as an algebraic subvariety of $\check{E}_{\sigma}$ via the embedding of cones $\sigma_{\fh}\hookrightarrow\sigma$. Let $\tilde{E}_{\sigma, M}:=\tilde{E}_{\sigma}\cap\check{E}_{\sigma, M}$ and $E_{\sigma, M}:=E_{\sigma}\cap\check{E}_{\sigma, M}$. Endow each space with the subspace topology. Endow $\check{E}_{\sigma, M}$ the fs logarithmic analytic structure from restricting the one on $\check{E}_{\sigma, M}$. This is different from the structure on $\check{E}_{\sigma, M}$ as an independent period torsor.

\begin{definition}
Let $X$ be a logarithmic manifold. We say $Y\subset X$ is a logarithmic submanifold if it is a logarithmic manifold as well as a $\bC$-logarithmic local ringed subspace of $X$, and for any $y\in Y\subset X$, there exists a local chart $(Z_\lambda, I_\lambda)$ of $X$ around $x$ such that there is a logarithmically smooth fs logarithmic analytic subspace $Z_{\lambda}'\leq Z_{\lambda}$, and $(Z_{\lambda}', I_{\lambda}|_{Z_{\lambda}'})$ is a local chart of $y$ in $Y$. 
\end{definition}

 Let $\Theta_{\sigma, M}$ be the restriction of $\Theta_{\sigma}$ on $E_{\sigma, M}$ and $\mathfrak{M}$ be its image. It is clear that $\tilde{E}_{\sigma, M}$ (resp. $E_{\sigma, M}$) is a logarithmic submanifold of $\tilde{E}_{\sigma}$ (resp. $E_{\sigma}$). We will show:

\begin{prop}\label{prop:logsubmanifold}
    $\mathfrak{M}$ is a logarithmic submanifold of $\Gamma_{\sigma}\backslash D_{\sigma}$, and the map $\Theta_{\sigma, M}: E_{\sigma, M}\rightarrow \mathfrak{M}$ is a $\sigma_{\fh, \bC}$-torsor of logarithmic manifolds.
\end{prop}
\begin{proof}
    Our proof relies on \cite[Lemma 7.3.3]{KU08}. To apply the lemma, we need to show $\sigma_{\fh, \bC}$ acts properly and freely on $E_{\sigma, M}$. Moreover, we need to show for any $x\in E_{\sigma, M}$, there exists a logarithmic submanifold $B_x\subset E_{\sigma, M}$ passing through $x$, a neighborhood $U$ of $0\in \sigma_{\fh, \bC}$ such that the group action map $U\times B_x\rightarrow E_{\sigma, M}$ is an isomorphism onto its image.

    We first show $\sigma_{\fh, \bC}$ acts freely on the fiber of $\Theta_{\sigma, M}$. From the result for $\Theta_{\sigma}: E_{\sigma}\rightarrow \Gamma_{\sigma}\backslash D_{\sigma}$, it is enough to show on the set-theoretical level that for $q\in \mathfrak{M}$, the fiber $\Theta^{-1}_{\sigma, M}(q)\subset E_{\sigma, M}$ is a $\sigma_{\fh, \bC}$-orbit. Fix $(q,F)\in E_{\sigma, M}$, for any $a\in \sigma_{\bC}$, by Equation \eqref{Eqn:torsoraction} it is enough to show $(\mathbf{e}(a)q, \exp(-a)F)\in E_{\sigma, M}$ if and only if $a\in \sigma_{\fh, \bC}$. For the first factor, note that $\mathbf{e}(a)q\in \mathrm{toric}_{\sigma_{\fh}}$ if and only if $\mathbf{e}(a)\in \mathrm{torus}_{\sigma_{\fh}}\simeq C^{*}\otimes \Gamma_{\sigma_{\fh}}$ if and only if $a\in \sigma_{\fh}$. For the second factor, note that the natural map $\sigma_{\bC}\rightarrow T_{\check{D}}F$ is an injection, hence $\exp(-a)F\in M$ if and only if $a\in T_{\check{M}}F\cap \sigma_{\bC}=\sigma_{\fh, \bC}$.
    
    Next we show the properness of the $\sigma_{\fh, \bC}$-action. By \cite[Lemma 7.2.7]{KU08}, it is enough to show for any directed families $\{(q_{\lambda}, F_{\lambda})\}_{\lambda\in \Lambda}\subset E_{\sigma, M}$ and $\{a_{\lambda}\}_{\lambda\in \Lambda}\subset \sigma_{\fh, \bC}$ such that both 
    $\{(q_{\lambda}, F_{\lambda})\}_{\lambda\in \Lambda}$ and $\{a_\lambda(q_{\lambda}, F_{\lambda})\}_{\lambda\in \Lambda}$ converge in $E_{\sigma, M}$, $\{a_{\lambda}\}$ must converge in $\sigma_{\fh, \bC}$. This immediately follows from the facts that $\{a_{\lambda}\}$ converges in $\sigma_{\bC}$ because of the proper action of $\sigma_{\bC}$ on $E_{\sigma}$, and $\sigma_{\fh, \bC}$ is a closed subspace of $\sigma_{\bC}$.

    Finally, similar to the ideas used in \cite[Section 7.3.5]{KU08}, we may choose $x=(q,F)\in \check{B}_x\subset \check{E}_{\sigma, M}$ such that $\check{B}_x\simeq B_1\times B_2$, where $B_1$ is an open neighborhood of $q$ in $\mathrm{toric}_{\sigma(q)\cap\fh}$, and $F\in B_2\subset M$ intersects $\exp((\sigma(q)\cap\fh)_{\bC})F$ transversely at $F$ and with complementary dimension in $\check{M}$. Let $B_x:=\tilde{B}_x\cap E_{\sigma, M}$. From the arguments we have in the last section for $\tilde{A}_x$ and $A_x$, this $B_x\subset E_{\sigma, M}$ satisfies the requirement. More precisely, the inclusions $\check{B}_x\subset\check{A}_x$ and $B_x\subset A_x$ realize $\mathfrak{M}$ as a logarithmic submanifold of $\Gamma_{\sigma}\backslash D_{\sigma}$. The proof is complete.
\end{proof}

\subsection{Proof of the algebraicity result}

We are ready to prove Theorem \ref{Thm:weaklyspecialalgebraic}. Fix a completion $\olphi: \olB\rightarrow \overline{\wp}$ given by Theorem \ref{Thm:MainTHMformal}. Once we can show any weakly special subvariety $S\subset B$ with Hodge datum $(\mathbf{H}, M)$ can be extended to an analytic subvariety of $\olB$, Serre's GAGA theorem will grant the algebraicity of $S$.

By Lemama \ref{lem:CLFPartition}, there is an 
open cover of  $\olB=\bigcup_kW_k$ such that each $W_k$ is foliated by CLF's and admits a proper KNU map $\overline{\Phi}_k: W_k\rightarrow \Gamma_{\tau_k}\backslash D_{\tau_k}$ for some monodromy cone $\tau_k\subset \mathfrak{g}$. Theorem \ref{Thm:analyticimagepropermap} implies each $\overline{\Phi}_k(W_k)$ is a complex analytic space.

Let $\mathfrak{M}_k$ be the image of $(\mathrm{toric}_{\tau_k\cap\fh}\times M)\cap E_{\tau_k, M}$ in $\Gamma_{\tau_k}\backslash D_{\tau_k}$ via the torsor map \eqref{Eqn:sigmatorsormap}. Let $x\in \mathfrak{M}_k\cap \overline{\Phi}_k(W_k)$. By Proposition \ref{prop:logsubmanifold}, we may choose neighborhoods $x\in A_x\subset \Gamma_{\tau_k}\backslash D_{\tau_k}$, $x\in V_x\subset \overline{\Phi}_k(W_k)$ and $B_x:=A_x\cap \mathfrak{M}_k$ such that $V_x\cap B_x$ is an analytic subspace of $V_x\cap A_x$. This implies for each $k$, $\olphi_k^{-1}(\mathfrak{M}_k)$ is an analytic subspace of $W_k$.

Finally, since each $\olphi_k$ has connected fibers, we may patch all $\olphi_k^{-1}(\mathfrak{M}_k)$ to an analytic subvariety $S$ of $\olB$ which must be algebraic by GAGA, and it is evident that $S\cap B=\Phi^{-1}(\Gamma_H\backslash M)$. The proof of Theorem \ref{Thm:weaklyspecialalgebraic} is complete. 
% ---------------------------------------
\section{On the existence of a CMCX}\label{Sec08}
%----------------------------------------

The purpose of this section is to prove Theorem \ref{Thm:CMCXexists} which will guarantee that Theorem \ref{Thm:MainTHMformal} is valid in the full generality. Some combinatorial definitions and results need to be established before the proof.

% ---------------------------------------
\subsection{Hyperintersection}
% ---------------------------------------

For any $I, J\subset \cI$ with $Z_I, Z_J\neq \emptyset$, we choose points $b_I\in Z_I^{*}, b_J\in Z_J^{*}$, and a directed path $\gamma\subset \hat{B}$ starting at $b_I$ and ending at $b_J$. In this section, we will define the hyperintersection of CMC's $\psigma_I\cap_{\gamma}\pmb{\sigma_J}$.

\subsubsection{Indexed path and scissors}

\begin{definition}
The $\cI$-index of the path $\gamma$ is the sequence of strata $Z_K^{*}$ $\gamma$ passes through. Denote the index sequence as $\mathrm{Ind}(\gamma)=\langle I=I_0\rightarrow I_1\rightarrow ...\rightarrow I_m=J\rangle$.
\end{definition}

\begin{definition}
    We say $\gamma$ has convex index seqeuence if for any subsequence $I_{l-1}\rightarrow I_l\rightarrow I_{l+1}$ of $\mathrm{Ind}(\gamma)$, we have either $I_{l-1}, I_{l+1}\leq I_l$ or $I_{l-1}, I_{l+1}\geq I_l$, where ``$\leq, \geq$" is the partial order given by the containment relation $I\leq J\Leftrightarrow I\subset J$.
\end{definition}

Note that for any $I_{l-1}\rightarrow I_l\rightarrow I_{l+1}$ with $I_{l-1}\leq I_{l}\leq I_{l+1}$ (or the other direction), we may rewrite it as $I_{l-1}\rightarrow I_{l-1}\rightarrow  I_l\rightarrow I_l\rightarrow I_{l+1}$ which satisfies the convex index sequence condition. Therefore, we may assume every path $\gamma$ is indexed by convex sequence, written as 
\begin{equation}\label{Eqn:convexsequence}
\begin{tikzpicture}[baseline=(current bounding box.center)]
  \node (S) at (-0.8, 0) {$I$};
  \node (S0) at (-0.4,0) {$=$};
  \node (A) at (0,0) {$I_0$};
  \node (B) at (0.8,0.5) {$I_1$};
  \node (C) at (1.6,0) {$I_2$};
  \node (D) at (2.4,0.5) {$...$};
  \node (E) at (3.2,0) {$I_m$};
  \node (T0) at (3.6,0) {$=$};
  \node (T) at (4,0) {$J$};

  \draw[->] (A) -- (B);
  \draw[->] (B) -- (C);
  \draw[->] (C) -- (D);
  \draw[->] (D) -- (E);
\end{tikzpicture}
\end{equation}
where a triple $I_{l-1}\rightarrow I_{l}\rightarrow I_{l+1}$ with $I_l$ lying above (resp. below) $I_{l-1}$ and $I_{l+1}$ means $I_{l-1}, I_{l+1}\leq I_l$ (resp. $I_{l-1}, I_{l+1}\geq I_l$).

\begin{definition}
    A triple $I_{l-1}\rightarrow I_{l}\rightarrow I_{l+1}$ with $I_{l-1}, I_{l+1}\leq I_l$ is called a scissors.
\end{definition}

Therefore, after adding paths of the form $I\rightarrow I$ if necessary, we may also write the index sequence of $\gamma$ as 
\[
\mathrm{Ind}(\gamma)=\langle S_0\rightarrow S_1\rightarrow ...\rightarrow S_k\rangle
\]
where $S_i$'s are all scissors passed by $\gamma$ in order. We call this the scissors sequence of $\gamma$.

\subsubsection{Definition of hyperintersection}

Given a path $\gamma$ connecting $x_I$ and $x_J$ and with scissors sequence $\langle S_0\rightarrow S_1\rightarrow ...\rightarrow S_k\rangle$, we will define the hyperintersection $\psigma_I\cap_{\gamma}\psigma_J$ as a subcone of both $\psigma_I$ and $\psigma_J$. Note that the hyperintersection does not depend on the choice of $b_I\in Z_I^{*}$ and $b_J\in Z_J^{*}$.

\begin{definition}\label{Def:hyperint}
    The hyperintersection $\psigma_I\cap_{\gamma}\psigma_J$ is defined inductively on the number of scissors as follows.
    \begin{enumerate}
    \item When $\gamma$ only contains one scissors $S: I\rightarrow K\rightarrow J$, by the adjacent maps $\psigma_I, \psigma_J\hookrightarrow \psigma_K$, we may define $\psigma_I\cap_{\gamma}\pmb{\sigma}_J=\psigma_I\cap_{S}\pmb{\sigma}_J:= \psigma_I\cap\pmb{\sigma}_J$, viewed as a subcone of all of $\psigma_I, \psigma_J$ and $\psigma_K$.

     \item Suppose the hyperintersection $\psigma_I\cap_{\gamma'}\psigma_{J'}$ is defined for all $I,J'$ and $\gamma'$ with no more than $k-1$ scissors. For $\psigma_I\cap_{\gamma}\psigma_J$ where $\gamma$ contains $k$ scissors, we may define 
    \[
    \psigma_I\cap_{\gamma}\psigma_J:=(\psigma_I\cap_{\gamma'}\psigma_{J'})\cap_{S_k}\psigma_J
    \]
    where $\gamma'$ connects $I$ and $J'$ with $k-1$ scissors and $J'$ and $J$ is connected by a single scissor $S_k$, and $\psigma_I\cap_{\gamma'}\psigma_{J'}$ is viewed as a subcone of $\psigma_I$ and $\psigma_{J'}$.
    \end{enumerate}
    In particular, $\psigma_I\cap_{\gamma}\psigma_J$ may be realized as a subcone of $\psigma_K$ for any $K$ appears in the index sequence of $\gamma$. We call this a representation of $\psigma_I\cap_{\gamma}\psigma_J$ in $\psigma_K$.
\end{definition}

\begin{remark}
    When an index $K$ appears in $\mathrm{Ind}(\gamma)$ for multiple times, there are in general multiple ways to choose representations of $\psigma_I\cap_{\gamma}\psigma_J$ in $\psigma_K$ due to the effect of monodromy. We will discuss this phenomena in the last section. In particular, the natural embedding of $\psigma_I\cap_{\gamma}\psigma_J$ in $\psigma_I$ and $\psigma_J$ refers to the representation corresponding to $I$ appearing at the head and the $J$ appearing at the tail unless specified.  
\end{remark}

\begin{definition}\label{Def:linkedrep}
    If $\psigma\subset \psigma_I, \psigma'\subset\psigma_J$ are the representations of some hyperintersection $\psigma_I\cap_{\gamma}\psigma_J$, we call $(\psigma, \psigma')$ as a pair of \textit{linked} representations.
\end{definition}

% \begin{remark}
%     \jacob{This remark confuses me. {\color{blue} Haohua -- Ok, I just want to remind the definition of being ``simplicial".}{\color{purple} that's fine, but what do you mean "generalization of intersections of cones in a polyhedral complex? I think I would remove the second sentence, I am fine with the first.}{\color{blue}Haohua -- It is an intuitive way to say that hyperintersections are not traditional intersections in general: We do not have to realize two cones in a common linear space, nor do we require them intersect physically. It is unnecessary to say this though...}}For the case when $\hat{B}$ is simplicial corresponds to $\bV\rightarrow B$ (Definition \ref{Def:simplicialvhs}), it is equivalent to say that any hyperintersection $\psigma_I\cap_{\gamma}\psigma_J$ is a face of both $\psigma_I$ and $\psigma_J$ in terms of their representatives. In this sense, hyperintersection and simplicial condition may be seen as generalizations of intersection of cones in a polyhedral complex.
% \end{remark}

The main theorem for this section is:

\begin{theorem}\label{Thm:finitehypint}
    Running over all indices $J\subset \cI$ and all (finitely) indexed paths $\gamma\subset \hat{B}$ connecting $Z_I^{*}$ and $Z_J^{*}$, there are only finitely many different subcones of $\psigma_I$ acting as the representative of some hyperintersection $\psigma_I\cap_{\gamma}\psigma_J$.
\end{theorem}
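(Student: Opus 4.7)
The plan is to reduce the problem to a finite combinatorial one by exploiting the finiteness of $\cI$ (which holds because $\hat{B}$ is projective with only finitely many boundary strata) together with the reduction afforded by Lemma \ref{Lemma:Repeatedscissors}. Let $\mathfrak{S} := \{(K_-,L,K_+) : K_\pm \subseteq L \subseteq \cI,\ Z_L \neq \emptyset\}$ denote the set of all scissors triples in $\cI$; by finiteness of $\cI$, the set $\mathfrak{S}$ is itself finite.

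The core strategy is to show that every hyperintersection $\psigma_I \cap_\gamma \psigma_J$ can already be realized by a path whose scissors sequence uses each element of $\mathfrak{S}$ at most once. Given $\gamma$ with scissors sequence $S_1, S_2, \ldots, S_m$, suppose some scissors $S$ appears in positions $i$ and $j$ with $i < j$. I would invoke Lemma \ref{Lemma:Repeatedscissors} to absorb the repetition: informally, extending a path whose sequence already contains $S$ by $S$ again leaves the hyperintersection unchanged. By iterated application, together with a bookkeeping argument that reroutes the tail $S_{j+1},\ldots,S_m$ within $\hat{B}$ from the endpoint of the reduced sub-path, one obtains an equivalent reduced path whose scissors sequence has length at most $|\mathfrak{S}|$ with no repetitions.

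Given this reduction, the number of scissors sequences to consider is at most $\sum_{k=0}^{|\mathfrak{S}|} |\mathfrak{S}|!/(|\mathfrak{S}|-k)!$, a finite quantity. Each such sequence determines at most one hyperintersection through the inductive definition, and thus the set of hyperintersections $\psigma_I \cap_\gamma \psigma_J$, viewed as subcones of $\psigma_I$, is finite.

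The main technical obstacle is the middle-repetition reduction: Lemma \ref{Lemma:Repeatedscissors} in its stated form only handles a scissors appended at the tail of $\gamma$, so for a repeat $S = S_i = S_j$ in the middle of the sequence one must argue that the tail $S_{j+1},\ldots,S_m$ can be continued from the endpoint of the sub-path $S_1,\ldots,S_{j-1}$, and, crucially, that the resulting hyperintersection is unchanged. This reduces to a commutativity-type statement for the scissors operation across different ambient cones, and is where most of the technical work would concentrate.
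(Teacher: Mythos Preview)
Your overall strategy---bounding the hyperintersections by the finite set $\mathfrak{S}$ of scissors---is the same one the paper uses, and your finiteness count is fine. The difference lies in how the reduction is carried out, and the gap you yourself flag is real: deleting a middle repetition $S_i = S_j$ and rerouting the tail $S_{j+1},\ldots,S_m$ would require exactly the ``commutativity-type statement'' you allude to, and you have not supplied it. Lemma~\ref{Lemma:Repeatedscissors} as stated only absorbs a scissor appended at the \emph{end}, so there is genuine work left.

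The paper sidesteps this obstacle entirely. Rather than shortening a single path, it proves the stronger statement that if $\gamma$ and $\gamma'$ use the same \emph{set} of scissors (ignoring order and multiplicity), then $\psigma_I\cap_{\gamma}\psigma_J = \psigma_I\cap_{\gamma'}\psigma_{J'}$. The trick is to \emph{concatenate} rather than delete: one forms $\gamma_0 = \gamma \circ \gamma'' \circ (\gamma')^{-1}$, where $\gamma''$ is chosen so that every scissor appended after $\gamma$ already occurs in $\gamma$. Now Lemma~\ref{Lemma:Repeatedscissors} applies repeatedly at the tail to give $\psigma_I\cap_{\gamma}\psigma_J = \psigma_I\cap_{\gamma_0}\psigma_I$. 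On the other hand, $\gamma_0$ factors through $\gamma'$, and since hyperintersection is monotone (extending a path can only shrink the cone), one gets $\psigma_I\cap_{\gamma_0}\psigma_I \subset \psigma_I\cap_{\gamma'}\psigma_{J'}$. Symmetry gives the reverse inclusion. This argument never needs to commute or delete scissors in the middle of a sequence---everything happens at the tail, where the lemma applies directly. You would do well to replace your ``delete-and-reroute'' plan with this ``compose-and-absorb'' one.
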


The theorem will be proved in Section \ref{Sec09}. In the remaining of this section, we will assume Theorem \ref{Thm:finitehypint} and showing how this theorem leads to the existence and construction of a CMCX (hence prove Theorem \ref{Thm:CMCXexists}).

% ---------------------------------------
\subsection{Construction of CMCX}
% ---------------------------------------

The purpose of this section is to prove Theorem \ref{Thm:CMCXexists} by direct construction. 

\subsubsection{Semicomplex and the canonical complexification}

We introduce the concept of polyhedral semicomplex (or simply semicomplex) and its canonical complexification. Note that in this section we assume polyhedral cones are closed.

\begin{definition}\label{Def:semicomplex}
    Let the field $\mathbb{F}$ be $\bQ$ or $\bR$. A collection $\cC$ of cones in $\mathbb{F}^n$ is called a polyhedral semicomplex if the following two conditions hold.
    \begin{enumerate}
        \item For any $\sigma\in \cC$, every face of $\sigma$ is a union of cones in $\cC$.
        \item For any $\sigma, \tau\in \cC$, if $\sigma^{\circ}\cap\tau^{\circ}\neq\emptyset$, then $\sigma=\tau$.
    \end{enumerate}
A \textit{sub-semicomplex} is a subset of cones which also constitutes a semicomplex.

Given a semicomplex $\cC$ and a cone $\tau$ which is a union of faces of $\cC$, we denote by $\cC\!\!\mid_\tau$ the collection of cones in $\cC$ contained in $\tau$, which is also a semicomplex.
\end{definition}

Any polyhedral complex is a semicomplex. Besides, a standard example of semicomplex is the following which will be used later.
\begin{lemma}\label{Lem:StandardSemiComp}
Given a polyhedral complex $\cC'$. For any cone $\sigma\in \cC'$, associate a polyhedral complex $\Sigma(\sigma)$ supported on $\sigma$, then the collection $\cC$ given by replacing each cone $\sigma\in \cC'$ by cones in the minimal common refinement\footnote{If $\cC_1,...,\cC_m$ are polyhedral complexes with the same support, their minimal common refinement complex is the collection $\{\bigcap_k\sigma_k, \ \sigma_k\in \cC_k\}$.} $\hat{\Sigma}(\sigma)$ of $\{\Sigma(\tau)|_{\sigma}, \ \tau\in \cC', \ \sigma\leq \tau\}$ is a semicomplex.
\end{lemma}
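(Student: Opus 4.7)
The plan is first to make precise the collection $\cC$: I would interpret $\cC$ as the set of cones $c$ for which there exists a (necessarily unique) carrier $\sigma(c)\in\cC'$ with $c\in\hat{\Sigma}(\sigma(c))$ and the relative interior $c^\circ\subset\sigma(c)^\circ$; equivalently, $\cC=\bigsqcup_{\sigma\in\cC'}\{c\in\hat{\Sigma}(\sigma):c^\circ\subset\sigma^\circ\}$. This is the natural reading of ``replacing each $\sigma\in\cC'$ by cones in $\hat{\Sigma}(\sigma)$'' that avoids double-counting cones sitting on lower faces (which would otherwise appear both in $\hat{\Sigma}(\sigma)|_\rho$ and inside $\hat{\Sigma}(\rho)$ after refinement, violating semicomplex condition (2)). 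With this interpretation, condition (2) of Definition \ref{Def:semicomplex} is immediate: if $c_1^\circ\cap c_2^\circ\neq\emptyset$, the intersection lies in $\sigma(c_1)^\circ\cap\sigma(c_2)^\circ$, and since distinct open cells of the polyhedral complex $\cC'$ are disjoint, $\sigma(c_1)=\sigma(c_2)=:\sigma$; then $c_1,c_2$ are cones of the polyhedral complex $\hat{\Sigma}(\sigma)$, so they coincide.

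For condition (1), the key structural fact is the refinement relation: whenever $\rho\leq\sigma$ in $\cC'$, $\hat{\Sigma}(\rho)$ refines $\hat{\Sigma}(\sigma)|_\rho$. This holds because the defining family $\{\Sigma(\rho')|_\rho:\rho'\geq\rho\}$ of $\hat{\Sigma}(\rho)$ contains the family $\{\Sigma(\rho')|_\rho:\rho'\geq\sigma\}$, obtained from the defining family of $\hat{\Sigma}(\sigma)$ by restricting from $\sigma$ to $\rho$, so the former yields at least as fine a common refinement. I would then prove the following slightly stronger statement by induction on $\dim\rho$: for any $\rho\leq\sigma$ in $\cC'$, every cone $g\in\hat{\Sigma}(\sigma)|_\rho$ is a union of cones in $\cC$. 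The base case $\dim\rho=0$ is trivial since $g=\{0\}$ is the unique cone in $\hat{\Sigma}(\{0\})$ and belongs to $\cC$. For the inductive step, the refinement relation writes $g$ as a union of cones $g_1,\ldots,g_m\in\hat{\Sigma}(\rho)$; those $g_i$ with carrier $\rho$ lie in $\cC$ directly, while those with carrier $\rho'<\rho$ sit in $\hat{\Sigma}(\rho)|_{\rho'}$ and, by the induction hypothesis applied to $\rho'\leq\rho$, are themselves unions of cones in $\cC$. Specializing to $g=f$ being a face of some $c\in\cC$ (where $\rho$ is the unique $\cC'$-face whose open cell contains $f^\circ$) yields condition (1).

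The step I expect to be the main obstacle is the setup of the induction. A naive induction on $\dim f$ does not terminate, since a face $f$ of a cone $c\in\cC$ may be refined into pieces of the same dimension lying on lower faces of $\cC'$. The remedy, as above, is to induct on the carrier dimension $\dim\rho$: passing from a cone in $\hat{\Sigma}(\rho)|_{\rho'}$ to its refinement in $\hat{\Sigma}(\rho')$ strictly decreases the ambient carrier dimension, so the recursion is well-founded.
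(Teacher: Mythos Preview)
Your argument is correct and rests on the same key observation as the paper's: for $\rho\leq\sigma$ in $\cC'$, the complex $\hat\Sigma(\rho)$ refines $\hat\Sigma(\sigma)|_\rho$. For condition~(1) the paper proceeds in a single step rather than by induction: given $\tau\in\cC$ with carrier $\sigma$ and a face $\tau'\leq\tau$, it passes to the carrier $\sigma'\leq\sigma$ of $\tau'$, notes that $\hat\Sigma(\sigma')$ refines $\hat\Sigma(\sigma)|_{\sigma'}\ni\tau'$, and concludes that $\tau'$ is a union of cones of $\hat\Sigma(\sigma')$, ``hence in $\cC$.'' Strictly speaking this last clause only applies to those pieces whose relative interior lies in $\sigma'^{\circ}$; but since the cones of $\hat\Sigma(\sigma')|_{\tau'}$ of full dimension $\dim\tau'$ already cover $\tau'$ and all have carrier $\sigma'$, no recursion is actually needed. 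Your induction on carrier dimension is a valid and more explicit route to the same conclusion, so the ``main obstacle'' you anticipate is real at the level of bookkeeping but dissolves once one notes that the full-dimensional pieces alone suffice. Your unpacking of the definition of $\cC$ and your verification of condition~(2) are more careful than the paper's one-line ``clearly satisfied,'' but the content is the same.
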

\begin{proof}
    Condition (2) of Definition \ref{Def:semicomplex} is clearly satisfied. For any $\tau\in \cC$, there exists a unique $\sigma\in \cC'$ such that $\tau^{\circ}\subset \sigma^{\circ}$, in this case any face $\tau'\leq \tau$ is a cone in $\hat{\Sigma}(\sigma)$ and is contained in a unique minimal face $\sigma'$ of $\sigma$. Since by construction $\hat{\Sigma}(\sigma')$ is a refinement of $\hat{\Sigma}(\sigma)|_{\sigma'}$, $\tau'$ is a union of faces in $\hat{\Sigma}(\sigma')$ hence in $\cC$.
\end{proof}

\begin{figure}[h!]
\centering
\includegraphics[width=0.6\textwidth]{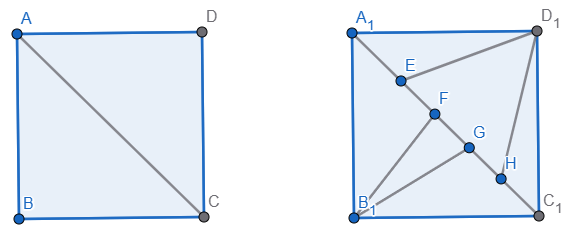}
\caption{Semicomplex from subdividing complex}
\label{SemiCompFromSub}
\end{figure}

For any semicomplex $\cC$, there is a canonical complexification $\overline{\cC}$ of $\cC'$ with the same support and refining every cone in $\cC'$. 

\begin{definition}
    The $r$-skeleton of $\cC$, denoted as $\cC_r$, is the collection of all cones in $\cC$ of dimension $\leq r$.
\end{definition}

Evidently the collection $\cC_1$ is a polyhedral complex, denote $\cC^1:=\cC$. Suppose there is a refinement $\cC^{r-1}$ of $\cC$ such that $\cC^{r-1}_{r-1}$ is a polyhedral complex. Consider any cone $\sigma\in \cC$ of dimension $r$ such that $\cC^{r-1}|_{\sigma}$ is not a polyhedral complex. Suppose $\sigma$ is spanned by $1$-dimensional cones $N_1,...,N_q$, let $G=\frac{1}{q}(N_1+...+N_q)$. There is a unique collection of disjoint cones $\{\tau_1,...,\tau_m\}\subset \cC^{r-1}$ such that $\sqcup_k\tau_k=\partial\sigma$, and $\sigma=\sqcup_k\mathrm{Conv}\{G, \tau_k\}$. We replace every such $\sigma\in \cC^{r-1}$ by the polyhedral complex consists of all $\{\tau_k\}$ and $\{\mathrm{Conv}\{G, \tau_k\}\}$, denote the resulting refinement of $\cC^{r-1}$ by $\cC^r$. It is clear that $\cC^r_r$ is a polyhedral complex. 

\begin{definition}\label{Def:CanonicalComp}
    The canonical complexification $\overline{\cC}$ of $\cC$ is defined to be $\cC^M$, where $M$ is the maximal dimension of cones in $\cC$. 
\end{definition}

\begin{prop}
    $\cC^r_r=\overline{\cC}_r$ for any $0\leq r\leq M$.
\end{prop}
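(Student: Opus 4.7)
The plan is to prove this by induction on $s\geq r$, showing that the refinement $\cC^s$ restricted to $|\cC_r|:=\bigcup\{\sigma\in\cC : \dim\sigma\leq r\}$ is stable once $s\geq r$. Setting $s=M$ then identifies $\overline{\cC}_r$ with $\cC^r_r$ (both understood as subcomplexes supported on $|\cC_r|$).

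As a preliminary observation, I would verify that $\cC^r_r$ is already entirely supported on $|\cC_r|$. This is clear from the inductive construction of $\cC^r$: at each step $s'\leq r$ one only subdivides $s'$-dimensional cones of $\cC$, and every newly introduced cone is contained in such a cone, hence in $|\cC_{s'}|\subseteq|\cC_r|$. So the content of the proposition reduces to the claim that further refinement steps $s+1>r$ do not alter anything living on $|\cC_r|$.

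For the inductive step ($s\geq r$), consider passing from $\cC^s$ to $\cC^{s+1}$: we subdivide every $(s+1)$-dimensional cone $\sigma'\in\cC$ for which $\cC^s|_{\sigma'}$ is not yet a polyhedral complex. The key claim is that $(\sigma')^\circ\cap|\cC_r|=\emptyset$ whenever $\dim\sigma'=s+1>r$. This follows by combining the two axioms of a semicomplex: Definition \ref{Def:semicomplex}(2) gives that $(\sigma')^\circ$ is disjoint from $\tau^\circ$ for every $\tau\in\cC$ with $\tau\neq\sigma'$, and a routine induction on dimension using Definition \ref{Def:semicomplex}(1) shows that every point of $|\cC_r|$ lies in the relative interior of some $\tau\in\cC$ with $\dim\tau\leq r$, yielding the decomposition $|\cC_r|=\bigsqcup\{\tau^\circ : \tau\in\cC,\,\dim\tau\leq r\}$. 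Since each newly added cone $\mathrm{Conv}\{G_{\sigma'},\tau_k'\}$ contains $G_{\sigma'}\in(\sigma')^\circ$ in its relative interior, no new cone lies inside $|\cC_r|$; and the removed cone $\sigma'$ is itself not in $|\cC_r|$. Thus $\cC^{s+1}|_{|\cC_r|}=\cC^s|_{|\cC_r|}$, and iterating up to $s=M$ completes the argument.

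The main technical hurdle is the decomposition $|\cC_r|=\bigsqcup\tau^\circ$. One starts from a point $p\in|\cC_r|$ lying in some $\tau\in\cC_r$ and writes $\tau=\tau^\circ\sqcup\partial\tau$; when $p\in\partial\tau$ one uses Definition \ref{Def:semicomplex}(1) to express the ambient face of $\partial\tau$ containing $p$ as a union of strictly lower-dimensional cones of $\cC$ and descends inductively, terminating at the origin in the base case. This is the single place where both semicomplex axioms are essentially used together.
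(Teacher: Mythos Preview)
Your approach is essentially the paper's: both argue that the passage $\cC^r \to \cC^{r+1} \to \cdots \to \cC^M$ only subdivides cones of dimension strictly greater than $r$, so the $r$-skeleton is untouched. The paper states this in a single line; you unpack it by using the two semicomplex axioms to justify $(\sigma')^\circ \cap |\cC_r| = \emptyset$ whenever $\dim\sigma' > r$, which is exactly the detail needed to make the one-line argument rigorous.

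Two small points. First, $G_{\sigma'}$ is an extremal ray of $\mathrm{Conv}\{G_{\sigma'},\tau_k'\}$, not a point of its relative interior; your conclusion is nonetheless correct, since the relative interior of that cone lies in $(\sigma')^\circ$ anyway (a strictly positive combination of an interior point with a boundary point remains interior). Second --- and the paper's own proof glosses over this too --- the subdivision at step $s+1 > r$ \emph{does} introduce new cones of dimension $\leq r$ (for instance the ray through $G_{\sigma'}$ itself), and these lie outside $|\cC_r|$. So what your argument literally establishes is $\cC^r_r = \overline{\cC}\big|_{|\cC_r|}$; the skeleton equality $\cC^r_r = \overline{\cC}_r$ as written requires the interpretation you flag in your parenthetical ``both understood as subcomplexes supported on $|\cC_r|$''. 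This restricted statement is in any case what the paper needs downstream.
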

\begin{proof}
    In the refinement process $\cC\rightarrow \cC^2\rightarrow...\rightarrow \cC^M=\overline{\cC}$, any cone of dimension $r$ is not subdivided in the sequence $\cC^r\rightarrow\cC^{r+1}\rightarrow...\rightarrow \cC^M=\overline{\cC}$.
\end{proof}

\begin{lemma}\label{lem:subcomplexification}
    Let $\cC\subset\cE$ be a sub-semicomplex. Then $\ol{\cC}\subset \ol{\cE}$ is a sub-complex.
\end{lemma}

\begin{proof}
    In fact we claim by induction that $\cC^{r}$ is a subcomplex of $\cE^{r}$. For $r=1$ it is clear. Suppose its true for $r-1$. Then let $\sigma\in\cE$ be such that $\cE^{r-1}\mid _\sigma$ is not a polyhedral complex. We then replace $\sigma$ by a union of other cones whose interior is contained in $\sigma^\circ$. If $\sigma\in\cC$ then this process is the same for $\cC^r$ as for $\cE^r$. If however $\sigma\not\in\cC$, then none of the new cones are either. The claim follows.
\end{proof}

\begin{figure}[h!]
\centering
\includegraphics[width=0.6\textwidth]{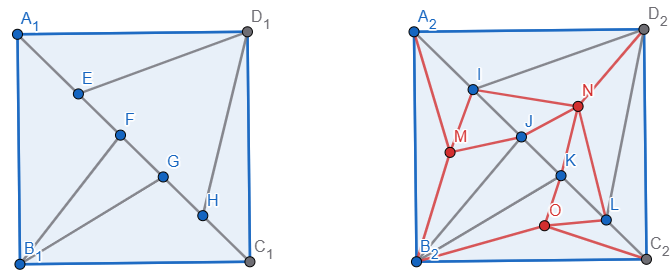}
\caption{The canonical complexification}
\label{CanonicalComp}
\end{figure}

\subsubsection{Proof of Theorem \ref{Thm:CMCXexists}}

We are ready to prove Theorem \ref{Thm:CMCXexists}. Since for every $I\leq J\subset \cI$, the image of the adjacent map $\psigma_I\hookrightarrow\psigma_J$ can be realized as a hyperintersection $\psigma_I\cap_{\gamma}\psigma_J$, Theorem \ref{Thm:CMCXexists} will follow from the following theorem.

\begin{theorem}\label{Thm:StrongerCMCXexists}
    For every $I\subset \cI$ we may associate a polyhedral complex $\Sigma_{\cT}(I)$ supported on $\psigma_I$ such that for any hyperintersection $\psigma_I\cap_{\gamma}\psigma_J$ there exists a polyhedral complex $\Sigma_{\cT}(I, J, \gamma)$ supported on $\psigma_I\cap_{\gamma}\psigma_J$ which is realized as a subcomplex of both $\Sigma_{\cT}(I)$ and $\Sigma_{\cT}(J)$ via the corresponding representatives.
\end{theorem}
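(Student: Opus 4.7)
The plan is a recursion on $n := \dim \psigma_I$, combining the finiteness of hyperintersections (Theorem~\ref{Thm:finitehypint}), the semicomplex construction of Lemma~\ref{Lem:StandardSemiComp}, and the canonical complexification of Definition~\ref{Def:CanonicalComp}. For the base case $n=1$, $\psigma_I$ is a ray and I would simply set $\Sigma_{\cT}(I) = \{0, \psigma_I\}$. For the inductive step, suppose $\Sigma_{\cT}(J)$ has been constructed for every $J$ with $n_J < n$, satisfying the theorem. Fix $I$ with $n_I = n$. By Theorem~\ref{Thm:finitehypint}, the collection of proper hyperintersections $\{H_\alpha = \psigma_I \cap_{\gamma_\alpha} \psigma_{J_\alpha}\} \subsetneq \psigma_I$ is finite, and each satisfies $\dim H_\alpha < n$.

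For each $\alpha$, I would first replace $\gamma_\alpha$ by a path $\gamma_\alpha'$ ending at some $J_\alpha'$ with $n_{J_\alpha'} < n$ and $H_\alpha = \psigma_I \cap_{\gamma_\alpha'} \psigma_{J_\alpha'}$, extending $\gamma_\alpha$ through additional scissors (Lemma~\ref{Lemma:Repeatedscissors}) so that it terminates at a lower-dimensional CMC that still contains $H_\alpha$. This yields a well-defined polyhedral complex $\Sigma_\alpha := \Sigma_{\cT}(J_\alpha')|_{H_\alpha}$ on $H_\alpha$ from the inductive hypothesis. The collection $\{(H_\alpha, \Sigma_\alpha)\}$ is consistent on overlaps: for any $\alpha, \beta$, the intersection $H_\alpha \cap H_\beta$ is itself a hyperintersection (by concatenating paths) of $\psigma_I$ with some smaller-dimensional CMC, so the inductive hypothesis forces $\Sigma_\alpha|_{H_\alpha \cap H_\beta} = \Sigma_\beta|_{H_\alpha \cap H_\beta}$.

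Next, I would build an auxiliary polyhedral complex $\cC'$ on $\psigma_I$ refining the face fan in such a way that every $H_\alpha$ is a union of cones of $\cC'$ (for instance by taking cones over a common refinement on $\partial \psigma_I$ adapted to the $H_\alpha$'s, with apex in the interior of $\psigma_I$). To each $\tau \in \cC'$ I would associate a polyhedral complex $\Sigma(\tau)$ on $\tau$ whose restriction to any face lying in some $H_\alpha$ agrees with $\Sigma_\alpha$, and whose interior decomposition is arbitrary. Lemma~\ref{Lem:StandardSemiComp} then produces a semicomplex $\cC$ on $\psigma_I$, and I set $\Sigma_{\cT}(I) := \overline{\cC}$ via Definition~\ref{Def:CanonicalComp}. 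The crucial feature that makes this work is that canonical complexification preserves the $r$-skeleton, i.e.\ $\overline{\cC}_r = \cC^r_r$, so the prescribed polyhedral structure $\Sigma_\alpha$ on each $H_\alpha$ survives unchanged. Consequently $\Sigma_{\cT}(I)|_{H_\alpha} = \Sigma_\alpha$ is a subcomplex of $\Sigma_{\cT}(J_\alpha')$, and hence of $\Sigma_{\cT}(J_\alpha)$ by applying the theorem inductively to the hyperintersection $\psigma_{J_\alpha'} \cap \psigma_{J_\alpha}$ containing $H_\alpha$.

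\textbf{The main obstacle} I anticipate is justifying the reduction step: proving that every hyperintersection $H = \psigma_I \cap_\gamma \psigma_J$ with $\dim H < n$ can indeed be realized as $\psigma_I \cap_{\gamma'} \psigma_{J'}$ for some $J'$ with $n_{J'} < n$. A direct scissors extension via Lemma~\ref{Lemma:Repeatedscissors} should handle most cases, but there may be configurations in which no such smaller-dimensional stratum $Z_{J'}^*$ exists geometrically in $\hat B$. The natural fallback is to process all $\Sigma_{\cT}(I)$ with $n_I = n$ simultaneously within the inductive step, iteratively applying common refinements across CMCs of equal dimension until mutual compatibility is attained; termination is guaranteed by Theorem~\ref{Thm:finitehypint} since only finitely many hyperintersection conditions must be enforced.
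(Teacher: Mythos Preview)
Your inductive scheme has a genuine gap at precisely the point you flag. The reduction step---realizing every proper hyperintersection $H = \psigma_I \cap_\gamma \psigma_J$ as $\psigma_I \cap_{\gamma'} \psigma_{J'}$ with $\dim \psigma_{J'} < n$---need not be possible. Lemma~\ref{Lemma:Repeatedscissors} only tells you that appending a scissor already present leaves the hyperintersection unchanged; it gives no mechanism for producing a new terminal index of smaller dimension, and in general $H$ can be a ray in $\psigma_I$ that is not contained in the image of any adjacent map from a lower-dimensional CMC. Your fallback (``process all $n$-dimensional CMCs simultaneously, iteratively refining'') is in the right spirit but is not a proof: refining $\Sigma_{\cT}(I)$ to match $\Sigma_{\cT}(J)$ on one hyperintersection can destroy compatibility with $\Sigma_{\cT}(K)$ on another, and Theorem~\ref{Thm:finitehypint} alone does not prevent such a cascade from cycling indefinitely.

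The paper avoids induction on dimension altogether. It first builds preliminary complexes $\Sigma^0_{\cT}(I)$ for \emph{all} $I$ (descending the index poset from maximal strata), each refining every hyperintersection landing in $\psigma_I$. Then, for each cone $\tau \in \Sigma^0_{\cT}(I)$, it takes the single minimal common refinement of \emph{all} $\Sigma^0_{\cT}(K)|_\tau$ over every $K$ and $\gamma$ with $\tau \subset \psigma_I \cap_\gamma \psigma_K$; this yields a semicomplex $\Sigma'_{\cT}(I)$ via Lemma~\ref{Lem:StandardSemiComp}, and $\Sigma_{\cT}(I)$ is its canonical complexification. The key observation replacing your reduction/iteration is a symmetry: the hyperintersections meeting a fixed $\psigma_I \cap_\gamma \psigma_J$ are in bijection whether enumerated from the $I$-side or the $J$-side (compose with $\cap_\gamma \psigma_J$ or $\cap_{\gamma^{-1}} \psigma_I$), and the bijection preserves the trace on $\psigma_I \cap_\gamma \psigma_J$. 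Hence $\Sigma'_{\cT}(I)$ and $\Sigma'_{\cT}(J)$ are the minimal common refinement of the \emph{same} family of complexes when restricted there, so they agree, and canonical complexification preserves this agreement. This one-shot global refinement is what makes termination and compatibility automatic.
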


\begin{remark}
    Though Theorem \ref{Thm:StrongerCMCXexists} has a stronger statement, it is actually equivalent to Theorem \ref{Thm:CMCXexists}. We leave this to interested readers.
\end{remark}

To prove Theorem \ref{Thm:StrongerCMCXexists}, we introduce the following $3$-step construction of a collection of complexes $\cT$ associating each $I\subset \cI$ a polyhedral complex $\Sigma_{\cT}(I)$ supported on $\psigma_{I}$ satisfying the requirement. 

\medskip

\noindent \textit{Step 1:}  For any $I\subset \cI$ with $Z_I^{*}\neq \emptyset$, associate a polyhedral complex $\Sigma^0_{\cT}(I)$ refining every possible hyperintersection $\psigma_I\cap_\gamma\psigma_J$. Theorem \ref{Thm:finitehypint} guarantees the existence of such finite polyhedral complexes.

\medskip

\noindent \textit{Step 2:} For any $I\subset \cI$ and any $\tau \in \Sigma^0_{\cT}(I)$, consider the polyhedral complex $\Sigma(\tau)$ supported on $\tau$ defined by the following condition: $\Sigma(\tau)$ is the unique minimal refinement of the set of complexes $\{\Sigma^0_{\cT}(K)|_{\tau}, \tau\subset \psigma_I\cap_{\gamma}\psigma_K \ \text{for some} \ \gamma\subset \hat{B}, K\subset\cI\}$. The existence of this finite refinement is again justified by Theorem \ref{Thm:finitehypint}.

\medskip

\noindent \textit{Step 3:} By Lemma \ref{Lem:StandardSemiComp}, Step 1-2 associate every $I\subset \cI$ a semicomplex $\Sigma'_{\cT}(I)$ supported on $\psigma_I$. $\Sigma_{\cT}(I)$ is defined to be the canonical complexification of $\Sigma'_{\cT}(I)$ given by Definition \ref{Def:CanonicalComp}.

\begin{claim}
    The collection $\cT$ defined by associating each $I\subset \cI$ the complex $\Sigma_{\cT}(I)$ satisfies Theorem \ref{Thm:StrongerCMCXexists} and thereby is a CMCX.
\end{claim}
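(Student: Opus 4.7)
My plan is to verify two properties of the collection $\cT$: first, that each $\Sigma_\cT(I)$ is a polyhedral complex supported on $\psigma_I$; second, that for every hyperintersection $\psigma_I \cap_\gamma \psigma_J$, the restrictions $\Sigma_\cT(I)|_{\psigma_I \cap_\gamma \psigma_J}$ and $\Sigma_\cT(J)|_{\psigma_I \cap_\gamma \psigma_J}$ coincide and form a common subcomplex of both. Applied in particular to the adjacent map $\psigma_I \hookrightarrow \psigma_J$, which is a specific instance of hyperintersection, this yields the CMCX compatibility condition of Definition \ref{def:CMCX}, so the claim reduces to Theorem \ref{Thm:StrongerCMCXexists}.

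First I would confirm that Steps 1--3 terminate and produce semicomplexes. The downward induction on $|I|$ in Step 2 is well-founded since $\cI$ is finite, and Theorem \ref{Thm:finitehypint} guarantees that at each stage only finitely many hyperintersections require refinement, so the polyhedral complex $\Sigma^0_\cT(I)$ exists. For Step 3, Lemma \ref{Lem:StandardSemiComp} reduces the semicomplex property of $\Sigma'_\cT(I)$ to checking that $\Sigma(\tau)|_{\tau'}$ is refined by $\Sigma(\tau')$ whenever $\tau' \leq \tau$ in $\Sigma^0_\cT(I)$. This is immediate from the definition: any index $K$ with $\tau \in \psigma_I \cap_\gamma \psigma_K$ also satisfies $\tau' \in \psigma_I \cap_\gamma \psigma_K$, so the defining refinement data for $\Sigma(\tau')$ contains that for $\Sigma(\tau)|_{\tau'}$.

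Next I would establish compatibility on hyperintersections at the semicomplex level. Given $\psigma_I \cap_\gamma \psigma_J$, Step 2 ensures that both $\Sigma^0_\cT(I)$ and $\Sigma^0_\cT(J)$ already refine this common subcone; Step 3 then refines each further using the minimal common refinement of $\{\Sigma^0_\cT(K)|_\tau\}$ over those $K$ whose hyperintersection with the ambient cone contains $\tau$. Since this indexing set depends only on $\tau$ and not on whether $\tau$ is viewed inside $\psigma_I$ or $\psigma_J$, one obtains the equality $\Sigma'_\cT(I)|_{\psigma_I \cap_\gamma \psigma_J} = \Sigma'_\cT(J)|_{\psigma_I \cap_\gamma \psigma_J}$ as semicomplexes.

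The main obstacle is Step 4: showing canonical complexification preserves this agreement. I would argue by induction on dimension along the chain $\cC \to \cC^2 \to \cdots \to \cC^M = \overline{\cC}$ of Definition \ref{Def:CanonicalComp}. The decision of whether to subdivide a cone $\sigma$ of dimension $r$, together with the barycenter $G = \tfrac{1}{q}(N_1 + \cdots + N_q)$ used, is determined entirely by $\sigma$ itself and the subcomplex built on its boundary by stage $r-1$. Since the semicomplexes $\Sigma'_\cT(I)$ and $\Sigma'_\cT(J)$ agree on $\psigma_I \cap_\gamma \psigma_J$ by the previous paragraph, and the induction hypothesis gives agreement on $(r-1)$-skeleta, the barycentric subdivision acts identically on both sides. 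This yields $\Sigma_\cT(I)|_{\psigma_I \cap_\gamma \psigma_J} = \Sigma_\cT(J)|_{\psigma_I \cap_\gamma \psigma_J}$, and by construction this common restriction is a subcomplex of each full complex. Combined with the previous steps, Theorem \ref{Thm:StrongerCMCXexists} follows, and Theorem \ref{Thm:CMCXexists} is a direct consequence.
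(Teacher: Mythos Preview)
Your proposal is correct and follows essentially the same approach as the paper: reduce to showing the semicomplexes $\Sigma'_\cT(I)$ and $\Sigma'_\cT(J)$ agree on any hyperintersection, then invoke the intrinsic nature of canonical complexification for Step~4. The one place where the paper is more explicit is your assertion that ``this indexing set depends only on $\tau$'': the paper justifies this by exhibiting the bijection between hyperintersections in $\psigma_I$ and in $\psigma_J$ meeting $\psigma_I\cap_\gamma\psigma_J$, obtained by composing paths with $\gamma$ or $\gamma^{-1}$, and observing that this composition does not alter the intersection with $\psigma_I\cap_\gamma\psigma_J$; conversely, you supply more detail than the paper on why Lemma~\ref{Lem:StandardSemiComp} applies in Step~3 and on the dimension induction in Step~4, which the paper dismisses in one line.
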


The only thing left to be proved is the following: For any hyperintersection $\psigma_I\cap_{\gamma}\psigma_J$, $\Sigma_{\cT}(I)$ and $\Sigma_{\cT}(J)$ restrict to the same subcomplex on $\psigma_I\cap_{\gamma}\psigma_J$ via the representatives in $\psigma_I$ and $\psigma_J$. Moreover, it is enough to prove $\Sigma'_{\cT}(I)$ and $\Sigma'_{\cT}(J)$ restrict to the same semicomplex on $\psigma_I\cap_{\gamma}\psigma_J$ by Lemma \ref{lem:subcomplexification}.

Note that there is an one to one correspondence between the sets of representatives of hyperintersections in $\psigma_I$ and $\psigma_J$ having non-empty intersection with the representatives of $\psigma_I\cap_{\gamma}\psigma_J$. The equivalence is simply given by composing with $\cap_{\gamma}\psigma_J$ or $\cap_{\gamma^{-1}}\psigma_I$. In other words, the set of hyperintersections intersecting $\psigma_I\cap_{\gamma}\psigma_J$ is canonical and does not depend on the chosen representative. By the construction of Step 3 above, $\Sigma'_{\cT}(I)|_{\psigma_I\cap_{\gamma}\psigma_J}$ and $\Sigma'_{\cT}(J)|_{\psigma_I\cap_{\gamma}\psigma_J}$ are both the minimal common refinement of the same set of complexes obtained by hyperintersections, therefore must be equivalent. This finishes the proof of Theorem \ref{Thm:StrongerCMCXexists}.

% ---------------------------------------
\section{On the finiteness of hyperintersections}\label{Sec09}
%----------------------------------------

The purpose of this section is to fulfill the proof of Theorem \ref{Thm:finitehypint} which will leads to the conclusion of Theorem \ref{Thm:CMCXexists}.

\subsection{Reinterpretation of hyperintersection}\label{Sec10-1}

To show Theorem \ref{Thm:finitehypint}, we need an alternative description on the concept of hyperintersection. 

\begin{lemma}\label{Lem:LocalMonodromy}
    For any $I\in \mathcal{I}$ we may choose a neighborhood $\mathcal{U}_I\subset \hat{B}$ of $Z_I^*$ with the monodromy group $\Gamma_{\cU_I}$ of $\bV|_{\cU_I}\rightarrow \cU_I\cap B$ centralizing the monodromy cone $\sigma_I$.
\end{lemma}
\begin{proof}
    We only have to show the case where $|I|=1$. For general $I$, we just take the intersection of neighborhood we get for each member in $I$. The rest of the proof is similar to Lemma \ref{Lem:localmonodromyCMCX}: For $I=\{i\}$ and any $p\in Z_i^*$ we may choose a punctured coordinate neighborhood $\cU_p\subset B$, a local lift $\Phi_p$ of $\Phi|_{\cU_p}$ such that the monodromy logarithm around $Z_i$ is the fixed $N_i\in\mathfrak{g}_{\bQ}$. Different choices of $\Phi_p$ are differed by the action of the centralizer of $N_i$, therefore the monodromy group of $\cU_i:=\bigcup_{p\in Z_i^*}\cU_p$ must centralize $N_i$.
\end{proof}

Fix a hyperintersection $\psigma_I\cap_{\gamma}\psigma_J$ whose path $\gamma$ has convex index sequence \eqref{Eqn:convexsequence}. We also fix a neighborhood $\mathcal{U}_I$ for every $I\subset \cI$ provided by Lemma \ref{Lem:LocalMonodromy}.

\begin{definition}\label{Def:Companionpath}
    A companion path of $\gamma$, denoted as $\gamma^{\mathrm{cp}}$, is a path in $B$ satisfying the following conditions.
    \begin{enumerate}
        \item For each $0\leq k\leq m$ there exists a point 
        $b_k\in \mathcal{U}_{I_k}\cap S$ such that $\gamma^{\mathrm{cp}}$ passes through $b_0,...,b_m$ in order, and $b_0, b_m$ are endpoints.

        \item If $I_k\leq I_{k+1}$ (resp. $I_k\geq I_{k+1}$), then the part of $\gamma^{\mathrm{cp}}$ connecting $b_k$ and $b_{k+1}$, lies in $\mathcal{U}_{I_k}\cap B$ (resp. $\mathcal{U}_{I_{k+1}}\cap B$).
    \end{enumerate}
    We also denote $\gamma^{\mathrm{cp}}_{\leq k}$ (resp. $\gamma_{\leq k}$) as the part of  $\gamma^{\mathrm{cp}}$ (resp. $\gamma$) connecting $b_0$ and $b_k$ (resp. a smooth point on $Z_{I_0}$ and a smooth point on $Z_{I_k}$).
\end{definition}

For every $0\leq k\leq m$, let $\tilde{\sigma}_k\subset \mathrm{End}(V_{b_k}, \bQ)$ be the monodromy cone obtained from loops based on $b_k$ and contained in $\mathcal{U}_{I_k}\cap B$, and $\sigma_k\subset \mathrm{End}(V_{b_0}, \bQ)$ be the monodromy cone obtained from analytic continuation of $\tilde{\sigma}_k$ from $b_k$ to $b_0$ along $(\gamma^{\mathrm{cp}}_k)^{-1}$.

\begin{theorem}\label{Thm:AlterinterpofHyperint}
   Under the natural identification $\psigma_I\simeq \overline{\sigma_0}$, $\psigma_I\cap_{\gamma}\psigma_J$ isomorphic to $\bigcap_{0\leq k\leq m}\overline{\sigma_k}=\bigcap_{0\leq k\leq m}(\overline{\sigma_0}\cap\overline{\sigma_k})$.
\end{theorem}

\begin{proof}
    Firstly, as a consequence of Lemma \ref{Lem:LocalMonodromy}, $\bigcap_{0\leq k\leq m}\overline{\sigma_k}$ is well-defined and does not depend on the choice of a companion path $\gamma^{\mathrm{cp}}$. The rest of the proof follows directly from the Definitions \ref{Def:hyperint} and \ref{Def:Companionpath} and induction on the number of indexes in $\gamma$.
\end{proof}

\subsection{Proof of Theorem \ref{Thm:finitehypint}}

In this section we prove Theorem \ref{Thm:finitehypint} which will finish the proof of Theorem \ref{Thm:MainTHMformal}. The proof relies on multiple results in the paper \cite{DR23} which we recall below.

\begin{lemma}\cite[Lemma 3.4]{DR23}\label{Lem:DR23lemma}
Suppose that $\sigma,\tau$ are nilpotent cones with $\tilde{A}(\sigma), \tilde{A}(\tau)\neq\emptyset$.  If $\sigma\cap\tau \not=\emptyset$ is open in $\tau$, then $\tilde{A}(\sigma)\subset\tilde{A}(\tau)$.
\end{lemma}

\begin{theorem}\cite[Thm. 2.2 \& Cor. 2.4]{DR23}\label{Thm:DR23finiteness}
    Let $\sigma, \tau$ be two nilpotent cones, then the set
    \[
    \{\sigma\cap\mathrm{Ad}_g\tau , \ g\in \Gamma, \ \tilde{A}(\sigma)\cap \tilde{A}(\mathrm{Ad}_{g}\tau)\neq \emptyset\} 
    \]
    of subcones of $\sigma$ is finite.
\end{theorem}

Fixed any $I\subset \cI$, denote
\begin{align}
    \cS_I&:=\{\psigma\subset\psigma_I, \ \psigma \ \text{is realized by some hyperintersection}\}, \\
    \cS_{I,l}&:=\{\psigma\in \cS, \ \mathrm{codim}_{\psigma_I}(\psigma)\leq l\}.
\end{align}
To show the finiteness of $\cS_I$, we proceed by induction on $l$. As $\cS=\cS_l$ for $l>\mathrm{dim}(\psigma_I)$, this will complete the proof of Theorem \ref{Thm:finitehypint}.

For any hyperintersection $\psigma_I\cap_\gamma \psigma_J$, denote
\[
\mathrm{cd}_I(\gamma,J):=\mathrm{codim}_{\psigma_I}(\psigma_I\cap_\gamma \psigma_J).
\]
We first consider the case $\mathrm{cd}_I(\gamma,J)=0$. By Theorem \ref{Thm:AlterinterpofHyperint}, there exists indexes $I_0=I, I_1,...,I_m=J$ such that 
\begin{equation}\label{Eqn:hyperintaltrep}
    \psigma_I\cap_\gamma \psigma_J\simeq\bigcap_{0\leq k\leq m}(\overline{\sigma_0}\cap\overline{\sigma_k})
\end{equation}
for certain choices of representatives $\sigma_0,...,\sigma_m$ of nilpotent cones around strata $Z_0=Z_I,...,Z_m=Z_J$. $\mathrm{cd}_I(\gamma,J)=0$ implies every term on the RHS of \eqref{Eqn:hyperintaltrep} has the same dimension as $\sigma_I$. Lemma \ref{Lem:DR23lemma} implies $\tA(\sigma_0)=\tA(\sigma_0\cap\sigma_k)$ for any $0\leq k\leq m$, hence all of $\sigma_0\cap\sigma_k$ belongs to the finite set given by Theorem \ref{Thm:DR23finiteness}. In other words, there are only finitely many different possible combinations on the RHS of \eqref{Eqn:hyperintaltrep} which implies there are only finitely many $\psigma_I\cap_\gamma \psigma_J\subset \psigma_I$ with $\mathrm{cd}_I(\gamma,J)=0$. This shows the finiteness of $\cS_{I,0}$.

Now for $l\in\bZ$ we assume the finiteness of $\cS_l$, and we will use this to show the finiteness of $\cS_{l+1}$.

\begin{lemma}\label{Lem:finitedualrep}
    Fix some $\psigma\subset \psigma_I$. Running over all indexes $J\subset \cI$, all possible indexed paths $\gamma$ connecting $I,J$, there are only finitely many $\psigma'$ as a subcone of some $\psigma_J$ such that $(\psigma, \psigma')$ can be realized as a pair of linked representations of some hyperintersection $\psigma_I\cap_\gamma\psigma_J$.
\end{lemma}
\begin{proof}
  Following Section \ref{Sec10-1}, we may choose base points $b_I, b_J\in B$, a companion path $\gamma^{\mathrm{cp}}$ of $\gamma$ such that $\psigma\simeq\psigma'$ is induced by analytically continuing some nilpotent cone $\sigma\subset\sigma_I\subset \mathrm{End}(V_{b_I}, \bQ)$ to some $\sigma_J\subset \mathrm{End}(V_{b_J}, \bQ)$ along $\gamma^{\mathrm{cp}}$. By Lemma \ref{Lem:DR23lemma} and Theorem \ref{Thm:DR23finiteness}, there are only finitely many subcones $\tau\subset \sigma_J$ such that $\tau$ is the image of some analytic continuation of $\sigma$ along some path connecting $b_I$ and $b_J$. This yields the lemma.
\end{proof}

Now we suppose $\psigma_I\cap_\gamma \psigma_J\subset \psigma_I$ is a hyperintersection with $\mathrm{cd}_I(\gamma,J)=l+1$. there exists some $1\leq l\leq m$ such that $I_{l-1}\rightarrow I_l\rightarrow I_{l+1}$ is a scissor, and
\begin{align}
    \mathrm{cd}_I(\gamma_{\leq l-1},J)&\leq l \\
    \mathrm{cd}_I(\gamma_{\leq l},J)&\leq l \\
    \mathrm{cd}_I(\gamma_{\leq l+1},J)&=l+1.
\end{align}
In other words, as subcones of $\psigma_l$, $\psigma_I\cap_{\gamma_{\leq l-1}}\psigma_{I_{l-1}}$ has codimension $\leq l$ while $(\psigma_I\cap_{\gamma_{\leq l-1}}\psigma_{I_{l-1}})\cap \psigma_{l+1}$ has codimension $l+1$. 

Note that the induction hypothesis together with Lemma \ref{Lem:finitedualrep} implies the finiteness of the following set:
\begin{equation}
    \cP_{I,l}:=\{(\psigma, \psigma'), \ \psigma\in \cS_{I,l}, \ \psigma'\in \psigma_K \ \text{for some }K\subset \cI , \ (\psigma, \psigma') \ \text{is a pair of dual representations}\}.
\end{equation}
We may take the $(\psigma, \psigma')\in \cP_{I,l}$ corresponding to $\psigma_I\cap_{\gamma_{\leq l-1}}\psigma_{I_{l-1}}$. The representative $\psigma''\subset \psigma_{l+1}$ of $\psigma_I\cap_{\gamma_{\leq l+1}}\psigma_{I_{l+1}}$ is obtained by cutting $\psigma'$ along a scissor. As a consequence, there are only finitely many possibilities for $\psigma''\subset \psigma_{l+1}$.

Finally, denote $\psigma'''$ as the representative of $\psigma_I\cap_\gamma \psigma_J$ in $\psigma_{l+1}$ and $\overline{\sigma'''}$ be its image in $\mathrm{End}(V_{b_{l+1}}, \bQ)$. It follows that $\psigma'''\subset \psigma''$ and
\begin{equation}\label{Eqn:alterrephypint2}
    \psigma'''\simeq \bigcap_{l+1\leq k\leq m} (\overline{\sigma'''}\cap\overline{\sigma_k}).
\end{equation}
Since every term on the RHS of \eqref{Eqn:alterrephypint2} has dimension $\mathrm{dim}(\psigma_I)-l-1$ which equals to the dimension of $\psigma'''$, The same arguments used to prove the finiteness of $\cS_{l,0}$ show that there are only finitely many possibilities for $\psigma'''$. 

To sum up, for every $\psigma\in \cS_{I,l}$, there are finitely many $\tau'$ acting as its dual representation of dimension $\geq \mathrm{dim}(\psigma_I)-l$, and for every such $\tau'$, there are finitely many $\tau''$ acting as cutting along some scissor with $\mathrm{dim}(\tau'')=\mathrm{dim}(\psigma_I)-l-1$, and for every such $\tau''$, there are finitely many $\tau'''$ acting as its dual representation of the same dimension $\mathrm{dim}(\psigma_I)-l-1$. Therefore, the finiteness of $\cS_{I,l}$ implies the finiteness of $\cS_{I,l+1}$. The induction is complete.

\printbibliography
\end{document}